\documentclass[11pt]{amsart}
\usepackage{amsfonts}
\usepackage{cite}
\usepackage{amssymb}
\usepackage{mathrsfs}
\usepackage{amsthm,amsmath,amssymb}
\numberwithin{equation}{section}
\usepackage{dsfont}
\usepackage[colorlinks=true]{hyperref}
\usepackage{color}

\DeclareMathOperator{\supp}{supp}

\hypersetup{urlcolor=red, citecolor=blue}
\theoremstyle{plain}
\textwidth=16cm \textheight=23cm \topmargin=-1cm \oddsidemargin=-0cm
\evensidemargin=-0cm \linespread{1.5}
\parindent=20pt
\newtheorem{theorem}{Theorem}[section]
\newtheorem{corollary}[theorem]{Corollary}

\newtheorem{lemma}[theorem]{Lemma}
\newtheorem{proposition}[theorem]{Proposition}

\theoremstyle{definition}
\newtheorem{definition}[theorem]{Definition}
\newtheorem{remark}[theorem]{Remark}

\newcommand{\GC}{\Gamma}

\newcommand{\R}{\mathbb{R}}
\newcommand{\N}{\mathbb{N}}

\newcommand{\Sspace}{\mathcal{S}} 

\newcommand{\norm}[1]{\|#1\|}
\newcommand{\abs}[1]{|#1|}
\newcommand{\Lone}{L^1(\R^n)}
\newcommand{\Linfty}{L^\infty(\R^n)}
\newcommand{\Cktheta}{C^{k,\theta}(\R^n)}

\errorcontextlines=0
\setcounter{tocdepth}{3}

\begin{document}
 	\title[ Qualitative properties of positive viscosity solutions to  ]{Qualitative properties of positive solutions to mixed local and nonlocal  critical problems in $\mathbb{R}^n$}

\author{Xifeng Su}
\address{School of Mathematical Sciences, Laboratory of Mathematics and Complex Systems (Ministry of Education)\\
	Beijing Normal University,
	No. 19, XinJieKouWai St., HaiDian District, Beijing 100875, P. R. China}
\email{xfsu@bnu.edu.cn\\ billy3492@gmail.com}

\author{Shasha Xu}
\address{School of Mathematical Sciences\\
	Beijing Normal University,
	No. 19, XinJieKouWai St., HaiDian District, Beijing 100875, P. R. China}
\email{ssxu@mail.bnu.edu.cn\\nanqiqi2020@163.com}

	\keywords{mixed order operator, regularity theorem, critical exponents, power-type decay, viscosity solution.}
	\subjclass[2010]{35R11,35B07}

\thanks{X. Su is supported by the National Natural Science Foundation of China (Grant No. 12371186, 12571207).}	
	
	\begin{abstract}
	We consider  the following mixed local and non-local  critical elliptic equation:
		\begin{equation*}\label{0.1}
			\left\{
			\begin{array}{lll}
				-\Delta u+(-\Delta)^su=\lambda h u^{p}+u^{2^*-1}, &\text{in}\,\, \mathbb{R}^n, \\
                                  u>0,      &\text {in} \,\, \mathbb{R}^n,\\
                                 \lim\limits_{|x|\to\infty} u(x) = 0,
			\end{array}
			\right.
		\end{equation*}
where $n\geqslant4, \,\, p\in (0,2^*-1),\,\, 2^*:=\frac{2n}{n-2}$ and $h$ is a positive function.

We first show the existence and regularity results  of  viscosity solutions to the above critical elliptic equation. More precisely, from \cite{Su-Xu} weak solutions are obtained and we prove they are indeed viscosity solutions and their regularity is: \( u \in C^{\alpha}(\mathbb{R}^n) \) for $p\in(0,1);$  \( u \in C^{2,\beta}(\mathbb{R}^n) \) for $p\in [1, 2^*-1).$ 

Moreover, for $p\in [1, 2^*-1)$, these viscosity solutions are indeed classical ones and we then prove the existence of positive solutions with the qualitative properties such as
the decay estimates and the radial symmetry.
\end{abstract}
	
	\maketitle
	\tableofcontents
	\section{Introduction}	
In this paper, we are concerned with qualitative properties of positive solutions to the mixed local-nonlocal critical elliptic equation:	
\begin{equation}\label{equ1}
			\mathcal{L}u:=-\Delta u+(-\Delta)^su=\lambda h u^{p}+u^{2^*-1},\quad  \text{ in }\,\, \mathbb{R}^n
	\end{equation}
satisfying $u>0$ in $\mathbb{R}^n$ and $\lim\limits_{|x|\to+\infty}u(x)=0,$ where $n\geqslant4, \,\, p\in (0,2^*-1)$\footnote{The various value of $p$ means that  the subcritical perturbations could be sublinear, linear and superlinear.}, $\lambda$ is  some appropriate positive real number\footnote{ See \cite{Su-Xu} for more details about the existence results of weak solutions.} and the weight function  $h$ satisfies:	 
\begin{itemize}
\item [(h$_1$)]	$ 0<h\in L^{\infty}(\mathbb{R}^n)\cap L^1(\mathbb{R}^n)\cap C^1(\R^n)$ such that  the embedding $\mathcal{X}^{1,2}(\mathbb{R}^n)\hookrightarrow\hookrightarrow L^{p+1}(\mathbb{R}^n,hdx)$ is compact, where $\mathcal{X}^{1,2}(\mathbb{R}^n)$ is an appropriate function space defined in \eqref{function space};
\item [(h$_2$)]  there exists $x_0\in \mathbb{R}^n$ such that $h$ is continuous at $x_0$.
\end{itemize}

Here, for any $s\in(0,1)$ the fractional Laplacian is defined as $$(-\Delta)^su(x)=c_{n,s}\text{P.V.}\int_{\mathbb{R}^n}\frac{u(x)-u(y)}{|x-y|^{n+2s}}dy,$$
where $c_{n,s}>0$ is a suitable normalization constant, whose explicit value does not play a role here, and P.V. means that the integral is taken in the Cauchy principal value sense.	

The pseudo-differential operator $-\Delta+(-\Delta)^s$ is the infinitesimal generator of a stochastic process $X$, where $X$ is the mixture of a Brownian motion and an independent symmetric $2s$-stable Lévy process.
These operators can be characterized using the Fourier transform $\mathcal{F}$ as
\[
\mathcal{F} \left( -\Delta u + (-\Delta)^s u \right) (\xi) = \left( |\xi|^2 + |\xi|^{2s} \right) \mathcal{F} u (\xi).
\]
We can thus define a heat kernel $\mathcal{H}(x, t)$ associated with the above operator as
\begin{equation}\label{heat kernel}
\mathcal{H}(x, t) := \int_{\mathbb{R}^n} e^{-t(|\xi|^2 + |\xi|^{2s}) + 2\pi i x \cdot \xi}  d\xi
\end{equation}
for $t > 0$ and $x \in \mathbb{R}^n$. 

Note that since equation~\eqref{equ1} involves non-scale-invariance of operators of different orders, the scale-invariant techniques fail in the present setting.
We will overcome this difficulty by 
 decomposing the Riesz kernel  from the above heat kernel of the mixed-order operator, which would provide a different approach from the one in \cite{SerenaDipierro2025} where the mixed subcritical nonlinear Schr\"odinger equations are considered.


From \cite{Su-Xu}, we have the existence results of weak solutions of \eqref{equ1} as follows:
	\begin{description}
	\item [Sublinear case $p\in(0,1)$]  there exists at least two nonnegative solution for $\lambda>0$ sufficiently small;
	\item [Linear case $p=1$]  there exists at least a nonnegative solution for $\lambda>0$ sufficiently small and $\lambda\in[\lambda_1,+\infty)$ where $\lambda_1$ is the first eigenvalue of the mixed operator;
	\item [Superlinear case $p\in(1, 2^*-1)$]  there exists at least a nonnegative solution for $\lambda<\lambda_1.$
	\end{description}

Based on these results, we always assume that  we have the existence of weak solutions in the present paper.
Our goal is to bootstrap the regularity of weak solutions according to different types of subcritical perturbations and investigate qualitative properties of positive classical solutions to \eqref{equ1} in the case of both linear and superlinear perturbations. Our strategy is: 
\begin{itemize}
    \item We first obtain the H\"older continuity of weak solutions for $p\in(0,1)$ and $C^{2,\alpha}$-regularity for $p\in [1, 2^*-1).$ The methods of proof rely on the scaling and iterative techniques, the heat kernel properties of the mixed operator  and the Riesz kernel properties of the fractional Laplacian.
    Consequently, we deduce that these weak solutions are viscosity solutions.
    \item We then show the existence of classical positive solutions for $p\in [1, 2^*-1)$ and the qualitative properties of such positive solutions, such as the power-type decay at infinity and the radial symmetry.
\end{itemize}



We point out that weak solutions of mixed critical problem presented here are proved to be viscosity solutions. 
Regarding the general theory of viscosity solutions for both the integer-order and fractional Laplace equations, one may refer to e.g. \cite{MR3161511,MR2494809,MR2707618}.

We now state the main results of this paper.
\begin{theorem}\label{thm:vis}
Let $u\in\mathcal{X}^{1,2}(\mathbb{R}^n)$ be a weak solution of \eqref{equ1} with $p\in(0,2^{*}-1)$. Then, $u\in C^{\alpha}(\R^n)$ for any $\alpha\in(0,1).$ Moreover,  if $u$ be a weak solution of \eqref{equ1}. Then, $u$ is  a viscosity solution of \eqref{equ1}.
\end{theorem}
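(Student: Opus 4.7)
The plan is to obtain the regularity in two stages—first an $L^\infty$ bound, then Hölder continuity via the heat kernel representation of $\mathcal{L}$—and finally to derive the viscosity property by a standard test function argument.

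First, I would establish $u\in L^\infty(\mathbb{R}^n)$. Since $\mathcal{X}^{1,2}(\mathbb{R}^n)$ embeds into $L^{2^*}(\mathbb{R}^n)$, the source $f:=\lambda h u^p+u^{2^*-1}$ starts in $L^{2^*/(2^*-1)}(\mathbb{R}^n)$ (with the subcritical term controlled through $h\in L^\infty\cap L^1$ for every $p\in(0,2^*-1)$, regardless of whether $p\lessgtr 1$). A Brezis--Kato / Moser iteration with truncated test functions of the form $u\min\{u^{2\beta},T^{2\beta}\}$, combined with the positivity of the nonlocal quadratic form associated with $(-\Delta)^s$, then upgrades $u$ to $L^q$ for every $q<\infty$, and a final step yields $u\in L^\infty(\mathbb{R}^n)$.

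Second, once $u$ and hence $f$ are bounded (and $f\in L^1$ thanks to $h\in L^1$ and $u\in L^{2^*}\cap L^\infty$), I would represent $u$ through the Green's function of $\mathcal{L}$ obtained by integrating the heat kernel \eqref{heat kernel} in time:
\[
u(x)=\int_{\mathbb{R}^n}G(x-y)\,f(y)\,dy,\qquad G(x)=\int_0^{\infty}\mathcal{H}(x,t)\,dt.
\]
A scaling analysis of the Fourier multiplier $(|\xi|^2+|\xi|^{2s})^{-1}$ shows that near the origin $G$ behaves like the classical Riesz kernel $c_n|x|^{-(n-2)}$ (the local Laplacian dominates at short distances), with the nonlocal part contributing a milder, integrable tail. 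Combining standard Riesz-potential estimates on the singular piece with the $L^\infty$ bound on $f$ yields $u\in C^{\alpha}(\mathbb{R}^n)$ for every $\alpha\in(0,1)$.

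Finally, the passage from weak to viscosity is routine once $u$ is bounded and continuous, as then $(-\Delta)^s u$ is well-defined pointwise in the principal-value sense on $C^2$ test functions. Given $\varphi\in C^2$ touching $u$ from above at a point $x_0$, I would replace $u$ by $\varphi$ on a small ball $B_r(x_0)$, test the weak formulation of \eqref{equ1} against a nonnegative cutoff supported in $B_r(x_0)$, and pass to the limit $r\to 0$ using continuity of $h$ and $u$ together with dominated convergence in the nonlocal term, deducing the viscosity inequality $\mathcal{L}\varphi(x_0)\leq\lambda h(x_0)\varphi(x_0)^p+\varphi(x_0)^{2^*-1}$; the opposite inequality for test functions touching from below is symmetric. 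I expect the main obstacle to be the $L^\infty$ bound in Step 1, since the critical exponent $2^*-1$ rules out a naive iteration; the key ingredient, as announced in the introduction, is the splitting of $\mathcal{H}$ into a short-time Riesz-like component and a long-time smoother piece, which allows the iteration to be run as in the purely local critical case while the nonlocal correction is absorbed through its smoothing properties.
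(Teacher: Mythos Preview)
Your regularity strategy is broadly sound and parallels the paper's treatment: both invoke the Green's function $G=\mathcal{F}^{-1}[(|\xi|^2+|\xi|^{2s})^{-1}]$ and Riesz-potential mapping properties. The paper decomposes the multiplier via a low/high-frequency cutoff, factoring $|\xi|^{-2s}$ out of the low-frequency piece (so that $\mathcal{Z}_1=z*R_{2s}$ with $z$ Schwartz), which gives only $C^{2s-\varepsilon}$ from $f\in L^\infty$ and then iterates. Your observation that the short-range singularity of $G$ is $|x|^{-(n-2)}$ (the Laplacian dominating at high frequency) is correct and would deliver $C^{1,\alpha}$ in one step---arguably cleaner. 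For $p\in(0,1)$ the paper abandons the representation formula entirely and runs an Ishii--Lions doubling-of-variables comparison argument instead; your unified Green's-function route is a legitimate alternative, though the identity $u=G*f$ for a merely weak solution still needs justification (the paper, too, is formal on this point). The $L^\infty$ bound is not reproved in the paper but cited from prior work.

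The viscosity step, however, has a genuine gap. Testing the weak formulation against a nonnegative cutoff in $B_r(x_0)$ and sending $r\to 0$ does not produce the pointwise inequality $\mathcal{L}v(x_0)\le f(x_0,u(x_0))$ unless $\mathcal{L}u$ is already defined pointwise at $x_0$, i.e.\ unless $u\in C^2$ there; with only $C^\alpha$ regularity (which is all one has for $p<1$), the integrated identity against a shrinking bump collapses to $0=0$ and yields no pointwise information. The paper instead mollifies: $u_\varepsilon=u*\rho_\varepsilon$ is smooth and satisfies $\mathcal{L}u_\varepsilon=f_\varepsilon$ classically, so the viscosity inequalities for $u_\varepsilon$ follow from the elementary comparison $-\Delta v(x_0)\le-\Delta u_\varepsilon(x_0)$ and $(-\Delta)^s v(x_0)\le(-\Delta)^s u_\varepsilon(x_0)$ at the touching point, and one then passes $\varepsilon\to0$ via a $\Gamma$-convergence stability lemma for viscosity solutions. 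You would need either this mollification-plus-stability device or a prior bootstrap to $C^2$ (which the paper obtains only for $p\ge 1$) before the pointwise touching comparison becomes legitimate.
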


\begin{remark}
The regularity assumption of $h\in C^1(\mathbb{R}^n)$ in (h$_1$) is only used to prove the $C^{2,\alpha}$-regularity of weak solutions of \eqref{equ1} with $p\in[1,2^*-1)$. However,  for $p\in(0,1)$, 
such assumption could be dropped to guarantee the H\"older continuity of the  corresponding weak solutions.
\end{remark}

	Furthermore, for the case of $p\in[1,2^*-1)$, the viscosity solutions of \eqref{equ1} become  classical solutions.
	
	\begin{theorem}[$C^{2,\alpha}$- regularity for $p\in[1,2^*-1)$]\label{thm:1.2}
	Let $u \in \mathcal{X}^{1,2}(\mathbb{R}^n)$ be a weak solution of \eqref{equ1} with $p\in[1,2^*-1)$. Then $u \in C^{2,\alpha}(\mathbb{R}^n)$ for any $\alpha \in (0, 1)$ and $$\Vert u\Vert _{C^{2,\alpha}(\mathbb{R}^n)}\leqslant C \left(\|h\|_{C^{1}(\R^n)}+ \|u\|_{L^\infty(\mathbb{R}^n)} \right)$$ for some $C=C(n,s,p,\lambda, \alpha, 2^*).$ 
	Moreover, $\lim_{|x|\to+\infty}u(x)=0.$
	\end{theorem}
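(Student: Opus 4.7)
The plan is to bootstrap the H\"older regularity delivered by Theorem~\ref{thm:vis} up to $C^{2,\alpha}$ by recasting \eqref{equ1} as a classical Poisson equation in which the nonlocal term plays the role of a lower-order perturbation. From Theorem~\ref{thm:vis}, $u\in C^{0,\alpha_0}(\R^n)\cap L^\infty(\R^n)$ for every $\alpha_0\in(0,1)$. The decisive feature of the regime $p\geqslant 1$ (as opposed to the sublinear one) is that $t\mapsto t^p$ is locally $C^1$ on $[0,\|u\|_{L^\infty}]$, so the composite source
\[
f(x):=\lambda\,h(x)\,u(x)^p+u(x)^{2^*-1}
\]
inherits the $C^{0,\alpha_0}(\R^n)$-regularity of $u$, with H\"older norm controlled linearly by $\|h\|_{C^1(\R^n)}$ and polynomially by $\|u\|_{L^\infty(\R^n)}$.

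With $f$ now H\"older continuous, I would rewrite the equation as
\[
-\Delta u=f-(-\Delta)^s u\qquad\text{in }\R^n
\]
and aim to apply classical interior Schauder estimates for $-\Delta$ on each unit ball $B_1(x_0)$. The translation invariance of $\mathcal{L}$ then converts these local estimates into the claimed global inequality, with constant $C=C(n,s,p,\lambda,\alpha,2^*)$. What remains is to show that $(-\Delta)^s u$ belongs to some H\"older class, and this is the main obstacle: the pointwise formula
\[
(-\Delta)^s u(x)=c_{n,s}\int_{\R^n}\frac{2u(x)-u(x+y)-u(x-y)}{|y|^{n+2s}}\,dy
\]
only converges absolutely once $u$ enjoys regularity strictly above $2s$, so for $s\geqslant 1/2$ the initial $C^{0,\alpha_0}$ information is insufficient and the direct Schauder loop stalls.

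To overcome this, the plan is to use the representation $u=G\ast f$, where $G$ is the Green's function of $\mathcal{L}$ obtained by time-integrating the heat kernel $\mathcal{H}$ of \eqref{heat kernel}. Near the origin $G(x)$ carries the Newtonian singularity $c_n|x|^{2-n}$ inherited from $-\Delta$, while its tail behaves like the Riesz kernel $c|x|^{-(n-2s)}$ of the fractional Laplacian, which is precisely the decomposition previewed in the introduction. Splitting $G$ accordingly, the Newtonian piece acts on $f$ like a classical Riesz potential of order $2$ and gains essentially two units in the H\"older scale, whereas the remainder convolution has a milder, lower-order effect. This furnishes a finite bootstrap: each iteration feeds an improved H\"older exponent of $u$ into the fractional term, which in turn raises the regularity of $f-(-\Delta)^s u$ and, via Schauder, of $u$. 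Once $u$ sits in any $C^{2,\alpha'}(\R^n)$ with $\alpha'>2s-1$, the fractional Laplacian $(-\Delta)^s u$ is classically well defined and itself H\"older continuous, and a final Schauder application on $-\Delta u=f-(-\Delta)^s u$ yields $u\in C^{2,\alpha}(\R^n)$ for any $\alpha\in(0,1)$ together with the asserted quantitative bound.

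Finally, for the decay, I would exploit that $u\in\mathcal{X}^{1,2}(\R^n)$ embeds into $L^{2^*}(\R^n)$, while the $C^{2,\alpha}(\R^n)$-estimate just obtained forces $\nabla u\in L^\infty(\R^n)$, hence uniform continuity of $u$. A standard argument then gives $u(x)\to 0$ as $|x|\to\infty$: if instead $u(x_k)\not\to 0$ along some sequence with $|x_k|\to\infty$, uniform continuity would keep $|u|$ above a fixed positive constant on disjoint balls of fixed radius centred at the $x_k$, contradicting the global $L^{2^*}$-integrability.
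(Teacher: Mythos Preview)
Your outline is broadly aligned with the paper's proof: both routes (i) exploit the Green kernel of $\mathcal{L}$ to bootstrap the initial H\"older regularity past the threshold at which $(-\Delta)^s u$ becomes classically defined, and then (ii) run an interior Schauder argument for $-\Delta$ with the nonlocal term treated as a lower-order perturbation. The paper carries out (i) via a Fourier-side splitting of $(|\xi|^2+|\xi|^{2s})^{-1}$ into a low-frequency piece $\sim |\xi|^{-2s}$ and a high-frequency remainder (Propositions~\ref{pro 4.1}--\ref{pro 4.2}), so that each convolution step gains $2s$ H\"older units through the Riesz potential $I_{2s}$; a finite iteration then yields $u\in C^{1,\theta}$ (Lemma~\ref{regu}). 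Step (ii) is executed by a mollifier-plus-cutoff scheme taken from \cite{MR4808805} (Lemma~\ref{dghs}, Proposition~\ref{prohh}), in which $(-\Delta)^s$ of the truncated mollified solution is controlled in a strictly weaker H\"older norm and absorbed by interpolation. Your physical-space decomposition of $G$ into a Newtonian core and a fractional tail is a legitimate alternative for (i) and would in principle gain two full units at once rather than $2s$; the one point you gloss over is that the tail $\sim|x|^{-(n-2s)}$ is not in $L^1(\R^n)$, so the representation $u=G\ast f$ and the tail convolution must be justified through the global integrability of $f$ (via $h\in L^1$ and $u\in L^{2^*}$) rather than merely $f\in L^\infty$. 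Your decay argument at the end is the same as the paper's.
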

	\begin{remark}Unlike the subcritical problems consider e.g. in \cite{MR4808805}, here the control constant $C$ in Theorem~\ref{thm:1.2} is also influenced by the critical exponent.
	\end{remark}
	Regarding the regularity of solutions to either the classical Laplace or fractional Laplace equations on $\mathbb{R}^n$, there are already quite a number of results, see e.g. \cite{MR2270163,CABRE201423,MR2863859}. Especially, in \cite{dipierro2024global}, the H\"older regularity for the classical fractional Laplacian equation with perturbations has been established with $p\in(0,1)$. Here, we prove the corresponding  H\"older or $C^{2,\alpha}-$regularity for mixed local and nonlocal equations driven by the sum of Laplacian and fractional Laplacian with $p\in(0,2^*-1]$.
	
		Finally, we obtain the power-type decay at infinity and the radial symmetry of positive classical solutions to \eqref{equ1} by comparison arguments and the method of moving planes, respectively.
	\begin{theorem} [Qualitative properties of classical positive solutions]\label{thm4.8}
	 Let \( n \geqslant4,\, p\in[1,2^*-1) \) and \( s \in (0,1) \). Let \( u \) be a positive classical solution of \eqref{equ1}. Then, there exist constants \( C_1, C_2>0 \) such that, for every \( |x| \geqslant 1 \),
\[
\frac{C_1}{|x|^{n+2s}} \leqslant u(x) \leqslant \frac{C_2}{|x|^{n-2s}}.
\]

Moreover, all positive solutions of \eqref{equ1} are radially symmetric about some point in $\mathbb{R}^n.$ 
\end{theorem}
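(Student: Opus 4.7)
The proof naturally divides into two tasks: establishing the decay estimates at infinity and applying the method of moving planes to obtain radial symmetry. The decay upper bound is also the key ingredient that allows the moving planes procedure to be started.

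For the decay, I plan to use a Green-function / comparison approach. The Green function $G$ of $\mathcal{L}=-\Delta+(-\Delta)^s$ has Fourier symbol $(|\xi|^2+|\xi|^{2s})^{-1}$, and since $|\xi|^{2s}$ dominates at low frequencies, a standard low-frequency analysis yields $G(x)\lesssim |x|^{-(n-2s)}$ as $|x|\to\infty$. Writing $u = G\ast f$ with $f := \lambda h u^{p}+u^{2^{*}-1}$ (which is non-negative, bounded, and has integrable decay thanks to Theorem~\ref{thm:1.2} and hypothesis (h$_1$)) and splitting the convolution into the contributions of $\{|y|\leqslant|x|/2\}$ and $\{|y|\geqslant|x|/2\}$ yields the upper bound $u(x)\leqslant C_2|x|^{-(n-2s)}$. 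For the lower bound, I would exploit the nonlocal tail of $(-\Delta)^{s}$: since $u$ is continuous and strictly positive, there exist $\varepsilon,r>0$ with $u\geqslant\varepsilon$ on $B_r(0)$, and a direct evaluation of the nonlocal contribution from $B_r(0)$ gives $u(x)\geqslant C_1|x|^{-(n+2s)}$. A clean implementation is to compare $u$ with the barrier $\phi(x) = C(1+|x|^2)^{-(n+2s)/2}$ via the weak maximum principle for $\mathcal{L}$.

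For the radial symmetry, I would apply the method of moving planes. Fixing a direction, say $e_1$, for $\mu\in\mathbb{R}$ set $\Sigma_\mu=\{x_1<\mu\}$, $T_\mu=\partial\Sigma_\mu$, $x^\mu=(2\mu-x_1,x_2,\dots,x_n)$, and $w_\mu(x)=u(x^\mu)-u(x)$. Since both $-\Delta$ and $(-\Delta)^s$ commute with reflections across $T_\mu$, $w_\mu$ satisfies in $\Sigma_\mu$ a linearized inequality of the form $\mathcal{L}w_\mu + c(x)\,w_\mu \geqslant 0$, with bounded potential $c(x)$ arising from the mean-value form of $u(x^\mu)^{2^{*}-1}-u(x)^{2^{*}-1}$ (and similarly for $h u^{p}$, modulo symmetry-breaking contributions from $h$). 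The three standard steps are: (i) \emph{start} the procedure, using the decay upper bound to show $w_\mu\geqslant 0$ in $\Sigma_\mu$ for $\mu$ sufficiently negative; (ii) \emph{propagate}, by increasing $\mu$ and invoking the strong maximum principle and Hopf-type lemma for $\mathcal{L}$; (iii) define the critical parameter $\mu_0 = \sup\{\mu : w_{\mu'}\geqslant 0\text{ in }\Sigma_{\mu'}\text{ for all }\mu'\leqslant\mu\}$ and conclude that $w_{\mu_0}\equiv 0$, i.e.\ $u$ is symmetric about $T_{\mu_0}$; iterating over all directions yields radial symmetry about a common point.

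The main obstacle is controlling the nonlocal contribution to $w_\mu$ in steps (ii)--(iii): $(-\Delta)^s w_\mu(x)$ involves $u(y)$ for $y$ on both sides of $T_\mu$, so one must use the antisymmetric-reflection trick and carefully exploit the kernel's sign structure to guarantee that $(-\Delta)^s w_\mu$ has the correct sign in $\Sigma_\mu$. This is well understood for the pure fractional Laplacian but requires adaptation to the mixed operator $\mathcal{L}$, together with a corresponding strong maximum principle for $\mathcal{L}$ acting on antisymmetric functions. A related delicate point concerns the weight $h$: as stated, $h$ is not assumed radial, so the radial-symmetry conclusion must be read as forcing $h$ to be radially symmetric about the symmetry point of $u$, since otherwise the inequality for $w_\mu$ would pick up a non-vanishing asymmetric contribution from $h(x^\mu)-h(x)$ obstructing the conclusion.
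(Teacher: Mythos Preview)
Your overall strategy matches the paper's: Green-function asymptotics for the decay, moving planes for the symmetry. The implementations differ, and your upper-bound argument has a gap.

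\textbf{Upper decay bound.} You propose writing $u=G\ast f$ with $f=\lambda h u^{p}+u^{2^{*}-1}$ and splitting the convolution. For this to yield $u(x)\lesssim|x|^{-(n-2s)}$ you need, at minimum, $f\in L^{1}(\mathbb{R}^{n})$. The term $\lambda h u^{p}$ is in $L^{1}$ by~(h$_{1}$), but $u^{2^{*}-1}\in L^{1}$ does \emph{not} follow from Theorem~\ref{thm:1.2}: that theorem only gives $u\in C^{2,\alpha}$ and $u\to0$ at infinity, and from the energy space you only know $u\in L^{2^{*}}$, hence $u^{2^{*}-1}\in L^{2^{*}/(2^{*}-1)}$, not $L^{1}$. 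Since moreover $G\notin L^{1}(\mathbb{R}^{n})$ (its symbol $(|\xi|^{2}+|\xi|^{2s})^{-1}$ blows up at the origin), the very representation $u=G\ast f$ needs justification. A bootstrap can close this, but it is not the one-line step you describe. The paper instead constructs the explicit barrier $v:=\mathcal{Z}\ast\chi_{B_{1/2}}$ (where $\mathcal{Z}(x)=\int_{0}^{\infty}\mathcal{H}(x,t)\,dt$ is your $G$), shows from the heat-kernel bounds that $\mathcal{L}v=0$ in $\mathbb{R}^{n}\setminus B_{1}$ and $v(x)\leqslant c\,|x|^{-(n-2s)}$ for $|x|>1$ (Lemmas~\ref{lem4.1} and~\ref{lem4.7}), and then compares $u$ with a multiple of $v$ outside a large ball via the weak maximum principle. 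This route requires no a~priori integrability of $f$.

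\textbf{Lower decay bound.} Here the two approaches essentially coincide: the paper uses the same barrier $\omega:=\mathcal{Z}\ast\chi_{B_{1/2}}$, now with the lower bound $\omega(x)\geqslant c\,|x|^{-(n+2s)}$ (Lemmas~\ref{lem4.2} and~\ref{lem4.6}), and compares via $\mathcal{L}(u-\beta\omega)\geqslant 0$ in $\mathbb{R}^{n}\setminus B_{1}$. Your nonlocal-tail / explicit-barrier idea is an acceptable alternative implementation.

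\textbf{Radial symmetry.} The paper merely cites the moving-plane machinery of \cite{SerenaDipierro2025} and \cite{MR3626036}; your three-step outline is consistent with those references. Your concern about $h$ is well-founded: without symmetry of $h$ the equation for $w_{\mu}$ acquires the extra term $\lambda\big(h(x^{\mu})-h(x)\big)u_{\mu}^{\,p}$, which has no sign in general, and the argument does not close. The paper does not address this point.
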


Note that compared with the power-type decay in \cite{SerenaDipierro2025}, we could only obtain the order of the upper bound $\frac{1}{ |x|^{n-2s}}$ here  rather than $\frac{1}{ |x|^{n+2s}}$ for the Schr\"odinger equation in \cite{SerenaDipierro2025}.

The rest of this paper is organized as follows. In Section \ref{sec2}, we collect some elementary results of $\mathcal{X}^{1,2}(\mathbb{R}^n)$, introduce the functional setting, such as the notions of weak solutions, viscosity solutions  and energy functional, and several fundamental properties of the heat kernel. 

Sections \ref{sec3}-\ref{sec:existence of vs} are devoted to the proofs of existence of viscosity solutions and  their regularity (Theorems \ref{thm:vis} and \ref{thm:1.2}).

In Sections~\ref{sec5}-\ref{sec6}, we establish the power-type decay estimate at infinity and the radial  symmetry of positive classical solutions (Theorem \ref{thm4.8}).

	\section{Preliminaries}\label{sec2}
	In this section, we will introduce the functional setting for the mixed local and nonlocal critical elliptic problem \eqref{equ1} and the properties of heat kernel associated to the mixed operator given in \eqref{heat kernel}. 
	
	\subsection{The functional setting}\label{sec2.1}
	Let $s\in(0,1)$. If $u:\mathbb{R}^n\to \mathbb{R}$ is a measurable function, we set
	\begin{equation*}
		[u]_s:= \left(\int_{\mathbb{R}^n}|D^su(x)|^2\, dx\right)^{1/2} := \left(\iint_{\mathbb{R}^{2n}}\frac{|u(x)-u(y)|^2}{|x-y|^{n+2s}}\, dxdy\right)^{1/2}
	\end{equation*}
	and we refer to $[u]_s$ as the Gagliardo seminorm of $u$ (of order $s$).
	
	We define the function space 
	\begin{equation}\label{function space}
	\mathcal{X}^{1,2}(\mathbb{R}^n) : = \left\{u\in L^{2^*}(\mathbb{R}^n):\triangledown u\in L^2(\mathbb{R}^n)\, \text{and}\, [u]_s< +\infty \right\}
	\end{equation}
	which is the completion of $C_0^{\infty}(\mathbb{R}^n)$ with respect to the norm 
	$$\Vert u\Vert:=\left(\Vert\triangledown u\Vert_{L^2(\mathbb{R}^n)}^2+[u]_s^2\right)^{1/2}, \quad  u\in C_0^{\infty}(\mathbb{R}^n).$$

 In the literature, different notions of solutions are taken into account when dealing with elliptic equations, such as the weak (also called distributional, or variational, or energy) solutions (i.e. the solutions that belong to a suitable Sobolev space and satisfy the equation in a distributional sense, when integrated against a suitable set of test functions) and the viscosity solutions (i.e. all the smooth functions that touch either from above or below the continuous solution are required to be either  viscosity subsolutions or viscosity supersolutions). 	
\begin{definition}[Weak solution]We say that \(u \in \mathcal{X}^{1,2}(\mathbb{R}^n)\) is a (weak) solution of \eqref{equ1} if
\begin{equation}\label{equu:2.5}
		\begin{aligned}
			&\int_{\mathbb{R}^n}\triangledown u(x)\cdot\triangledown \varphi(x)\, dx+\iint_{\mathbb{R}^{2n}}\frac{\left(u(x)-u(y)\right)\left(\varphi(x)-\varphi(y)\right)}{|x-y|^{n+2s}}\, dxdy\\
			&\,\,\,\,\,=\int_{\mathbb{R}^n}|u|^{2^*-2}u \varphi \, dx+\lambda\int_{\mathbb{R}^n}h|u|^{p-1}u \varphi \, dx
		\end{aligned}
	\end{equation}
for any $\varphi\in \mathcal{X}^{1,2}(\mathbb{R}^n).$	
\end{definition}

	We recall that \( u \in C(\mathbb{R}^n) \) is a \textit{viscosity subsolution} of \eqref{equ1} (more generally, as in \cite{MR3161511}, one can define the notion of viscosity subsolution and supersolution for semicontinuous functions, so that this class is closed under $sup$ and $inf$). Let $f= \lambda h u^{p}+u^{2^*-1}$ and $x_0 \in \mathbb{R}^n$ and $\phi \in C^2(B_R(x_0))$ be such that
\[
\phi(x_0) = u(x_0) \quad \text{and} \quad \phi \geqslant u \text{ in } B_R(x_0).
\]
If we define
\[
v(x):= 
\begin{cases}
\phi(x) & \text{if } x \in B_R(x_0), \\
u(x) & \text{otherwise},
\end{cases}
\]
then we have \( -\Delta v(x_0)+(-\Delta)^s v(x_0) \leqslant f(x_0, v(x_0)) \).

Similarly, we say that \( u \in C(\mathbb{R}^n) \) is a \textit{viscosity supersolution} of \eqref{equ1} if for any \( \psi \in C^2(B_R(x_0)) \) such that \( \psi(x_0) = u(x_0) \) and \( \psi \leqslant u \) in \( B_R(x_0) \), if we define
\[
w(x) := 
\begin{cases} 
\psi(x) &  \text{if } x \in B_R(x_0), \\
u(x) & \text{otherwise} ,
\end{cases}
\]
then we have \( -\Delta v(x_0)+(-\Delta)^s v(x_0) \geqslant f(x_0, v(x_0)) \).

A function \( u \) is called a \textit{viscosity solution} of problem \eqref{equ1} if it is both a viscosity subsolution and a viscosity supersolution.

\subsection{The properties of heat kernel}
First, we give several  properties related to the heat kernels of the mixed local and nonlocal operators.
\begin{theorem}[\cite{SerenaDipierro2025} Theorem 3.1]\label{heat}
Let $n \geqslant 1$ and $s \in (0, 1)$. Let $\mathcal{H}$ be as defined in \eqref{heat kernel}. Then,
\begin{itemize}
    \item $\mathcal{H}$ is nonnegative, radially symmetric, and nonincreasing with respect to $r = |x|$.
    \item There exist positive constants $C_1$ and $C_2$ such that
    \begin{equation*}
   \mathcal{H}(x,t) \leqslant C_1 \left\{\frac{t}{|x|^{n+2s}} \vee \frac{t^{s}}{|x|^{n+2s}}\right\} \wedge \left\{t^{-\frac{n}{2s}} \wedge t^{-\frac{n}{2}}\right\}, 
     \end{equation*}
    and
    \[
    \mathcal{H}(x, t) \geqslant C_2 
    \begin{cases} 
    \frac{t}{|x|^{n+2s}} & \text{if } 1 < t < |x|^{2s} \\ 
    e^{\frac{\pi |x|^2}{t}} t^{-\frac{n}{2}} & \text{if } |x|^2 < t < |x|^{2s} < 1, 
    \end{cases}
    \]
    where $a \land b := \min \{a, b\}$ and $a \lor b := \max \{a, b\}$.
\end{itemize}
\end{theorem}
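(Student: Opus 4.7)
The plan is to exploit the Fourier factorization
\[
e^{-t(|\xi|^2+|\xi|^{2s})} = e^{-t|\xi|^2}\cdot e^{-t|\xi|^{2s}},
\]
which yields the convolution representation $\mathcal{H}(\cdot,t) = G_t \ast S_t$, where $G_t$ is the classical Gaussian heat kernel (the inverse Fourier transform of $e^{-t|\xi|^2}$) and $S_t$ is the heat kernel of the symmetric $2s$-stable L\'evy process (the inverse Fourier transform of $e^{-t|\xi|^{2s}}$). Both $G_t$ and $S_t$ admit classical, sharp two-sided pointwise bounds, so the strategy is to transfer these through the convolution and a careful case analysis in $(x,t)$.

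For the qualitative properties, nonnegativity of $\mathcal{H}$ is immediate since $G_t$ and $S_t$ are both probability densities. Radial symmetry follows because the Fourier symbol depends only on $|\xi|$, hence $\mathcal{H}(\cdot,t)$ is the inverse Fourier transform of a radial function. For the monotonicity in $r=|x|$, I would use the fact that $G_t$ and $S_t$ are themselves radially nonincreasing, together with the preservation of this property under convolution of two radially symmetric nonincreasing functions on $\R^n$; this is a classical consequence of the Riesz rearrangement inequality, since each factor coincides with its own symmetric decreasing rearrangement.

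For the upper bounds, Young's inequality immediately gives the global estimates
\[
\mathcal{H}(x,t)\leqslant \|G_t\|_{L^\infty}\|S_t\|_{L^1}\leqslant Ct^{-n/2}, \qquad
\mathcal{H}(x,t)\leqslant \|S_t\|_{L^\infty}\|G_t\|_{L^1}\leqslant Ct^{-n/(2s)},
\]
producing the factor $t^{-n/(2s)}\wedge t^{-n/2}$. For the pointwise decay involving $|x|^{-(n+2s)}$, I would split the integral $\int G_t(y)S_t(x-y)\,dy$ into the regions $\{|y|\leqslant |x|/2\}$ and $\{|y|>|x|/2\}$. On the first region $|x-y|\geqslant |x|/2$, so the sharp stable tail estimate $S_t(z)\leqslant Ct/|z|^{n+2s}$ together with $\|G_t\|_{L^1}=1$ gives exactly the factor $Ct/|x|^{n+2s}$. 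On the second region the Gaussian tail decay of $G_t$ combined with $\|S_t\|_{L^1}=1$ is used, distinguishing the regimes $t\geqslant 1$ and $t\leqslant 1$ to recover the $t$ versus $t^s$ factor in the stated maximum.

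For the lower bounds, I would retain a well-chosen portion of the convolution. In the regime $1<t<|x|^{2s}$, restricting to $|y|\leqslant 1$ ensures $|x-y|\sim |x|$, and combining the matching stable lower bound $S_t(x-y)\geqslant ct/|x|^{n+2s}$ with $\int_{|y|\leqslant 1}G_t(y)\,dy\geqslant c$ (valid since $t>1$) yields the required bound. In the Gaussian regime $|x|^2<t<|x|^{2s}<1$, I would instead use the explicit lower bound on $G_t$ after restricting to a ball around $x$ on which $S_t$ is bounded below. The hard part will be organizing the case analysis so the estimates match cleanly across the several regimes (small vs.\ large time, short vs.\ long distance), and relying on the sharp two-sided bounds for the stable density $S_t$ (P\'olya, Blumenthal--Getoor and subsequent refinements) to recover the correct powers of $t$ and $|x|$ at the transitions.
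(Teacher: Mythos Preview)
The paper does not prove this statement; it is quoted verbatim as \cite{SerenaDipierro2025}, Theorem~3.1, with no proof given here. So there is no in-paper argument to compare against.

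That said, your outline is the standard and correct route, and is essentially how the cited reference proceeds: the factorization $e^{-t(|\xi|^2+|\xi|^{2s})}=e^{-t|\xi|^2}e^{-t|\xi|^{2s}}$ yields $\mathcal{H}(\cdot,t)=G_t*S_t$, and then one feeds in the sharp two-sided bounds on the Gaussian and the $2s$-stable density. The qualitative properties follow exactly as you say (Riesz rearrangement for monotonicity). For the upper bound your Young-inequality step and near/far splitting are right; the only place to be slightly careful is recovering the $t^s$ factor for $t\leqslant 1$: this comes from using the uniform bound $S_t(z)\leqslant C\min\{t^{-n/(2s)},\,t\,|z|^{-n-2s}\}$ and observing that on the far region the Gaussian tail is negligible compared with $t^s|x|^{-n-2s}$ once $|x|^2\gtrsim t$. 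For the lower bound in the regime $|x|^2<t<|x|^{2s}<1$ you want to center the integration window at $x$ and use $S_t(y)\geqslant c\,t^{-n/(2s)}$ for $|y|\leqslant t^{1/(2s)}$ together with the explicit Gaussian; the factor $e^{\pi|x|^2/t}$ in the statement is a typo for $e^{-\pi|x|^2/t}$, so do not try to reproduce a growing exponential.
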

{{
Then, 
define the auxiliary function
\begin{equation}\label{Bessel}\mathcal{Z}(x)=\int_0^{+\infty}\mathcal{H}(x,t)dt. \end{equation}
We give some basic properties of $\mathcal{Z}(x)$, which are essential for proving Theorem \ref{thm4.8}, as summarized below. 
To start with, we show the existence of the upper bound of $\mathcal{Z}(x)$.
\begin{lemma}\label{lem4.1}
Let \( n \geqslant 2 \) and \( s \in (0, 1) \).  
Then, 
\[
\begin{split}
 \mathcal{Z}(x) &\leqslant \frac{c_1}{|x|^{n-2s}}\quad \text{ for } |x| > 1;\\
\mathcal{Z}(x) &\leqslant c_2 
\begin{cases} 
\frac{1}{|x|^{n-2}} & \text{if } n \geqslant 3, \\ 
2|\ln |x|| & \text{if } n = 2, 
\end{cases} \quad \text{ for } |x| \leqslant 1
\end{split}
\]
for some positive constants \( c_1 \) and \( c_2 \) depending on \( n \) and \( s \).
\end{lemma}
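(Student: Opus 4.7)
The plan is to insert the pointwise upper bound on $\mathcal{H}$ from Theorem~\ref{heat} into the definition \eqref{Bessel} and to split the $t$-integral into three regimes, chosen so that exactly one branch of the $\max$/$\min$ is active on each.

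First, I would simplify the bound on each side of $t=1$. For $t\in(0,1]$, $s<1$ forces $t^{-n/(2s)}\wedge t^{-n/2}=t^{-n/2}$ and $t\vee t^s = t^s$, so
\[
\mathcal{H}(x,t)\leqslant C_1\,\min\!\bigl(t^s|x|^{-n-2s},\,t^{-n/2}\bigr),
\]
whereas for $t\geqslant 1$ the analogous reductions give
\[
\mathcal{H}(x,t)\leqslant C_1\,\min\!\bigl(t\,|x|^{-n-2s},\,t^{-n/(2s)}\bigr).
\]
The two branches in the first bound coincide at $t=|x|^2$, and the two in the second bound coincide at $t=|x|^{2s}$; these are the natural break points for the integral.

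For $|x|>1$ both crossover times exceed $1$, so I would write $\mathcal{Z}(x)=\int_0^1+\int_1^{|x|^{2s}}+\int_{|x|^{2s}}^\infty$ and use $t^s|x|^{-n-2s}$, $t|x|^{-n-2s}$, and $t^{-n/(2s)}$ respectively (each being the smaller of the two competing factors on its sub-interval). An elementary computation shows that the first integral contributes $O(|x|^{-n-2s})$ and the other two each contribute $O(|x|^{2s-n})$; the last uses $n>2s$, which is automatic since $n\geqslant 2$ and $s<1$. The dominant power for $|x|>1$ is $|x|^{-(n-2s)}$, yielding the first assertion.

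For $|x|\leqslant 1$ both crossover times now lie below $1$, so I would split as $\int_0^{|x|^2}+\int_{|x|^2}^1+\int_1^\infty$ and use $t^s|x|^{-n-2s}$, $t^{-n/2}$, and $t^{-n/(2s)}$ respectively. The first piece evaluates to $|x|^{2-n}/(s+1)$ in every dimension, and the third is a bounded constant (again from $n>2s$). The middle integral is the one that detects the dimension: for $n\geqslant 3$ it is dominated by $(n/2-1)^{-1}|x|^{2-n}$, whereas for $n=2$ the integrand degenerates to $t^{-1}$ and produces the logarithm $-2\ln|x|=2|\ln|x||$. Summing the pieces delivers the second assertion.

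The main technical point is to verify that on each of the chosen sub-intervals the claimed branch really is the smaller one, i.e.\ that $|x|^2$ and $|x|^{2s}$ are the genuine balancing times of the competing factors; once this is in place, the three integrals are elementary. A minor bookkeeping issue is isolating the $n=2$ case in the small-$|x|$ regime, where the middle piece only diverges logarithmically rather than as a power.
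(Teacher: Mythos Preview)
Your proposal is correct and essentially identical to the paper's proof: the paper also reduces the heat-kernel bound to $\min(t^s|x|^{-n-2s},t^{-n/2})$ on $(0,1)$ and $\min(t|x|^{-n-2s},t^{-n/(2s)})$ on $(1,\infty)$, then splits the time integral at exactly the same crossover points $|x|^{2s}$ (for $|x|>1$) and $|x|^2$ (for $|x|\leqslant 1$), with the $n=2$ logarithm arising from the middle piece $\int_{|x|^2}^1 t^{-1}\,dt$. There is no substantive difference.
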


\begin{proof}
From Theorem \ref{heat}, we derive that
\[
0 \leqslant \mathcal{H}(x, t) \leqslant {C_1} \left\{ t^{-\frac{n}{2s}} \wedge \frac{t}{|x|^{n+2s}} \right\} \quad\text{for all}\quad  t > 1,
\]
\[ \text{and}\quad
0 \leqslant \mathcal{H}(x, t) \leqslant {C_1} \left\{ t^{-\frac{n}{2}} \wedge \frac{t^s}{|x|^{n+2s}} \right\}\quad\text{for all}\quad  t \in (0, 1).\]
 From these formulas, we conclude that, if \( |x| > 1 \),
\begin{equation}\label{equ:4.1}
\begin{aligned}
\mathcal{Z}(x) &= \int_{1}^{+\infty}   \mathcal{H}(x, t)  dt + \int_{0}^{1}   \mathcal{H}(x, t)  dt \\
&\leqslant {C_1} \left( \int_{1}^{|x|^{2s}}   \frac{t}{|x|^{n+2s}}  dt + \int_{|x|^{2s}}^{+\infty}   t^{-\frac{n}{2s}}  dt + \int_{0}^{1}   \frac{t^s}{|x|^{n+2s}}  dt \right) 
\leqslant \frac{C_1}{ |x|^{n-2s}},
\end{aligned}
\end{equation}
and, if \( |x| \leqslant 1 \),
\begin{equation}\label{equ:4.2}
\begin{aligned}
\mathcal{Z}(x) &= \int_1^{+\infty}   \mathcal{H}(x,t)  dt + \int_0^1   \mathcal{H}(x,t)  dt \\
&\leqslant {C_1} \left( \int_1^{+\infty}   t^{-\frac{n}{2s}}  dt + \int_0^{|x|^2}   \frac{t^s}{|x|^{n+2s}}  dt + \int_{|x|^2}^1   t^{-\frac{n}{2}}  dt \right) \\
&\leqslant 
\begin{cases}
\frac{C_2}{|x|^{n-2}} & \text{if } n \geqslant 3 \\
2 \ln |x| & \text{if } n = 2 .
\end{cases}
\end{aligned}
\end{equation}
By combining \eqref{equ:4.1} with \eqref{equ:4.2}, we complete the proof.
\end{proof}

We shall make use of the nonnegativity of \( \mathcal{H} \) and \cite[Theorem 3.1]{SerenaDipierro2025} to prove the strict positivity of \( \mathcal{Z} \) and obtain its lower bound.

\begin{lemma}\label{lem4.2}
Let \( n \geqslant 2 \) and \( s \in (0, 1) \). Then, \( \mathcal{Z} \) is positive and
\[
\mathcal{Z}(x) \geqslant \frac{c_3}{|x|^{n+2s}} \quad \text{if } |x| > 1 \quad \text{and} \quad \mathcal{Z}(x) \geqslant \frac{c_4}{|x|^{n-2}} \quad \text{if } |x| \leqslant 1
\]
for some positive constants \( c_3 \) and \( c_4 \) depending only on \( n \) and \( s \).
\end{lemma}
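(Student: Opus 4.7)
The plan is to extract lower bounds on $\mathcal{H}$ from Theorem~\ref{heat} in the two asymptotic regimes $|x|\gg 1$ and $|x|\ll 1$, integrate them over the time intervals on which they hold, and patch the intermediate region by a soft compactness argument. Before doing any calculation I would record two preliminary facts about $\mathcal{Z}$. Strict positivity is immediate: $\mathcal{H}(\cdot,t)$ is the convolution of a Gaussian density with a symmetric $2s$-stable density, both everywhere positive, so $\mathcal{H}(x,t)>0$ for all $x\in\mathbb{R}^n$ and $t>0$, whence $\mathcal{Z}(x)>0$. Continuity of $\mathcal{Z}$ on $\{|x|\geqslant\delta\}$ for each $\delta>0$ follows from dominated convergence, using the upper bounds of Theorem~\ref{heat} as a uniform integrable dominator in $t$.

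For the large-$|x|$ regime I would use $\mathcal{H}(x,t)\geqslant C_2\, t/|x|^{n+2s}$ on $t\in(1,|x|^{2s})$, which is a non-empty interval precisely when $|x|>1$. Integrating gives
\[
\mathcal{Z}(x)\;\geqslant\;\frac{C_2(|x|^{4s}-1)}{2\,|x|^{n+2s}},
\]
and for $|x|\geqslant 2^{1/(4s)}$ the numerator is at least $1$, delivering the claimed bound $c_3/|x|^{n+2s}$. For the small-$|x|$ regime and $n\geqslant 3$ I would use $\mathcal{H}(x,t)\geqslant C_2\, e^{\pi|x|^2/t}\,t^{-n/2}$ on $t\in(|x|^2,|x|^{2s})$, discard the exponential factor (which is $\geqslant 1$), and integrate $t^{-n/2}$ to obtain
\[
\mathcal{Z}(x)\;\geqslant\;\frac{2C_2}{n-2}\cdot\frac{1-|x|^{(1-s)(n-2)}}{|x|^{n-2}},
\]
which is bounded below by $c_4/|x|^{n-2}$ as soon as $|x|$ stays away from $1$, say $|x|\leqslant 1/2$; the $n=2$ case is parallel with a logarithmic integral in place of the power.

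The leftover compact shells $1<|x|\leqslant 2^{1/(4s)}$ and $1/2\leqslant|x|\leqslant 1$ are closed, bounded, and away from the origin, so by the positivity and continuity of $\mathcal{Z}$ recorded above, $\mathcal{Z}$ attains a positive minimum on each; since $1/|x|^{n+2s}$ and $1/|x|^{n-2}$ are themselves bounded on these shells, adjusting constants yields the desired inequalities uniformly. The main obstacle is precisely this transition region near $|x|=1$, where the two integration intervals supplied by Theorem~\ref{heat} collapse to a point and the explicit asymptotic estimates degenerate. Establishing positivity and continuity of $\mathcal{Z}$ in advance is what allows a soft compactness argument to stitch the two asymptotic regimes into the globally valid lower bounds claimed in the lemma.
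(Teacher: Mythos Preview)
Your argument is correct and parallels the paper's in the two asymptotic regimes, but differs in how positivity and the transition shells near $|x|=1$ are handled. The paper integrates the large-$|x|$ lower bound of Theorem~\ref{heat} only over the fixed subinterval $(1,2)\subset(1,|x|^{2s})$ (rather than the full interval, as you do), and for small $|x|$ it performs a change of variable instead of simply discarding the exponential; both routes yield the same bounds for $|x|$ bounded away from~$1$. Where you invoke the probabilistic convolution structure for strict positivity, establish continuity of $\mathcal{Z}$ by dominated convergence, and then patch the intermediate annuli via a minimum-on-compacta argument, the paper instead appeals directly to the radial monotonicity of $\mathcal{Z}$ (inherited from $\mathcal{H}$, as recorded in Theorem~\ref{heat}): nonincreasing in $r=|x|$ plus the asymptotic bounds immediately gives $\mathcal{Z}>0$ everywhere and propagates the quantitative estimates across the shells $\{1/2\leqslant|x|\leqslant1\}$ and $\{1<|x|\leqslant2\}$. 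Your approach is slightly more self-contained, since it does not use the monotonicity clause of Theorem~\ref{heat}; the paper's is shorter, since that clause is already available and dispenses with the continuity/compactness step.
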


\begin{proof}
From Theorem \ref{heat}, we know that
\[
\mathcal{H}(x, t) \geqslant C_2 \frac{t}{|x|^{n+2s}} \quad \text{if } 1 < t < |x|^{2s}.
\]
Recalling the fact that \( \mathcal{H} \) is nonnegative, one deduces that, for any \( |x| > 2 \),
\begin{equation}\label{equ:4.3}
\mathcal{Z}(x) = \int_{0}^{+\infty}   \mathcal{H}(x, t)  dt \geqslant C_2 \int_{1}^{|x|^{2s}}   \mathcal{H}(x, t)  dt \geqslant C_2 \int_{1}^{2}   \frac{t}{|x|^{n+2s}}  dt \geqslant \frac{C_3}{|x|^{n+2s}}.
\end{equation}
Moreover, in light of Theorem \ref{heat}, for every \( |x|^2 < t < |x|^{2s} < 1 \),
\[
\mathcal{H}(x, t) > C_2 e^{\pi \frac{|x|^2}{t}} t^{-\frac{n}{2}}.
\]
As a consequence, by the definition of \( \mathcal{Z} \), one has that, for every \( |x| < \frac{1}{2} \),
\begin{equation}\label{equ:4.4}
\begin{aligned}
\mathcal{Z}(x) &\geqslant \int_{|x|^2}^{|x|^{2s}} C_2 e^{\pi \frac{|x|^2}{t}} t^{-\frac{n}{2}}    dt \\
&\geqslant  C_2 \int_{|x|^{2-2s}}^{1} e^{\pi y |x|^{-n+2}} y^{\frac{n}{2}-2}  dy \\
&\geqslant \frac{ C_2}{|x|^{n-2}} \int_{{\frac{1}{2}}^{2-2s}}^{1} e^{\pi y} y^{\frac{n}{2}-2}  dy = \frac{c_{n,s}}{|x|^{n-2}}.
\end{aligned}
\end{equation}
In addition, we recall that \( \mathcal{Z} \geqslant 0 \), and \( \mathcal{Z} \) is nonincreasing in \( r = |x| \). According to \eqref{equ:4.3} and \eqref{equ:4.4} , one concludes that \( \mathcal{Z}(x) > 0 \) for every \( x \in \mathbb{R}^n \).
\end{proof}



To sum up, we can obtain the following theorem.
\begin{theorem}
Let \( n \geqslant 2 \) and \( s \in (0, 1) \). Then, \( \mathcal{Z} \) is positive and
\[
\frac{c_1}{|x|^{n+2s}}\leqslant\mathcal{Z}(x) \leqslant \frac{c_2}{|x|^{n-2s}} \quad \text{if } |x| > 1, \] and
\[\frac{c_3}{|x|^{n-2}}\leqslant\mathcal{Z}(x)\leqslant 
\begin{cases}
\frac{c_4}{|x|^{n-2}} & \text{if } n \geqslant 3 \\
2 \ln |x| & \text{if } n = 2 .
\end{cases}
 \quad \text{if } |x| \leqslant 1
\]
for some constants $c_1,$ $c_2$, \( c_3 \) and \( c_4 \) depending only on \( n \) and \( s \).

\end{theorem}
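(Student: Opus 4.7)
The plan is to assemble the theorem directly from Lemmas~\ref{lem4.1} and \ref{lem4.2}: the former already yields all the upper bounds in exactly the form required, while the latter supplies the matching lower bounds. First I would simply cite Lemma~\ref{lem4.1} for the upper bounds $\mathcal{Z}(x)\leqslant c_1/|x|^{n-2s}$ on $\{|x|>1\}$ and $\mathcal{Z}(x)\leqslant c_2/|x|^{n-2}$ (or the logarithmic expression when $n=2$) on $\{|x|\leqslant 1\}$, since nothing further is needed there.

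For the lower bounds, I would invoke Lemma~\ref{lem4.2}, which establishes $\mathcal{Z}(x)\geqslant C_3/|x|^{n+2s}$ for $|x|>2$ and $\mathcal{Z}(x)\geqslant c_{n,s}/|x|^{n-2}$ for $|x|<1/2$. To extend these to the full ranges $|x|>1$ and $|x|\leqslant 1$ stated in the theorem, I would patch on the two intermediate annuli using the radial monotonicity and strict positivity of $\mathcal{Z}$ recorded in Theorem~\ref{heat} and Lemma~\ref{lem4.2}. Concretely, on the compact annulus $\{1<|x|\leqslant 2\}$ the function $\mathcal{Z}$ is continuous and, being nonincreasing in $r=|x|$, bounded below by the positive constant $m_1:=\mathcal{Z}(2e_1)>0$; since $|x|^{n+2s}\leqslant 2^{n+2s}$ there, one obtains $\mathcal{Z}(x)\geqslant (m_1/2^{n+2s})\,|x|^{-(n+2s)}$, and replacing $C_3$ by the minimum of $C_3$ and $m_1/2^{n+2s}$ yields a uniform constant $c_3$ valid on the whole region $\{|x|>1\}$. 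An identical argument on $\{1/2\leqslant |x|\leqslant 1\}$ supplies the lower bound in terms of $|x|^{-(n-2)}$ with a uniform $c_4$.

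The main obstacle, if one can call it that, is the purely cosmetic bookkeeping of constants across the two patched regions; no new analytic content is required beyond Lemmas~\ref{lem4.1}-\ref{lem4.2} and the qualitative properties of the heat kernel from Theorem~\ref{heat}. The result is essentially a corollary formed by intersecting the two preceding estimates, which is why one can expect the proof to occupy no more than a few lines.
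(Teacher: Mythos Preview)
Your proposal is correct and matches the paper's approach, which simply records the theorem as an immediate summary of Lemmas~\ref{lem4.1} and~\ref{lem4.2} without further argument. Note, however, that the \emph{statement} of Lemma~\ref{lem4.2} already gives the lower bounds on the full ranges $\{|x|>1\}$ and $\{|x|\leqslant 1\}$; the patching over the annuli $\{1<|x|\leqslant 2\}$ and $\{1/2\leqslant|x|\leqslant 1\}$ that you describe is precisely what is carried out \emph{inside} the proof of Lemma~\ref{lem4.2}, so you need only cite the lemma rather than redo that step.
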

\begin{lemma}
Let $n\geqslant 2$ and \(\Phi : \mathbb{R}^n \to \mathbb{C}\) be in the Schwartz space. Then, the function  
\[\mathbb{R}^n \times (0, +\infty) \ni (\xi, t) \mapsto e^{-t(|\xi|^{2s} + |\xi|^2 )} \Phi(\xi)\]
belongs to \(L^1(\mathbb{R}^n \times (0, +\infty), \mathbb{C})\) and  
\begin{equation}\label{equ:4.6}\iint_{\mathbb{R}^n \times (0, +\infty)} e^{-t(|\xi|^{2s} + |\xi|^2 )} \Phi(\xi)  d\xi  dt = \int_{\mathbb{R}^n} \frac{\Phi(\xi)}{ |\xi|^{2s} + |\xi|^2}  d\xi. \end{equation}
\end{lemma}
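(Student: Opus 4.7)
The plan is to apply Tonelli's theorem to establish integrability and then Fubini's theorem to obtain the identity. The argument splits into two parts: evaluating the $t$-integral explicitly and controlling the resulting $\xi$-integral using the decay of $\Phi$ and the dimension condition $n\geqslant 2$.

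First I would note that for each fixed $\xi\neq 0$, the function $t\mapsto e^{-t(|\xi|^{2s}+|\xi|^2)}$ is a decaying exponential and
\[
\int_0^{+\infty} e^{-t(|\xi|^{2s}+|\xi|^2)}\, dt = \frac{1}{|\xi|^{2s}+|\xi|^2}.
\]
Applying Tonelli's theorem to the nonnegative function $|e^{-t(|\xi|^{2s}+|\xi|^2)}\Phi(\xi)|$, the total double integral equals
\[
\int_{\mathbb{R}^n} \frac{|\Phi(\xi)|}{|\xi|^{2s}+|\xi|^2}\, d\xi.
\]
Hence the $L^1$ statement reduces to showing that this single integral is finite.

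For this I would split $\mathbb{R}^n$ into $B_1(0)$ and its complement. On the unit ball, since $|\xi|^{2s}\geqslant |\xi|^2$, we have the pointwise bound $\frac{1}{|\xi|^{2s}+|\xi|^2}\leqslant \frac{1}{|\xi|^{2s}}$, and since $\Phi$ is Schwartz (hence bounded), the integrand is dominated by $\frac{\|\Phi\|_{L^\infty}}{|\xi|^{2s}}$. This is integrable near the origin because $2s<2\leqslant n$. On the exterior $\{|\xi|>1\}$, we use $|\xi|^{2s}+|\xi|^2\geqslant |\xi|^2$ and the rapid decay of $\Phi$: for any $N$ large enough, $|\Phi(\xi)|\leqslant C_N(1+|\xi|)^{-N}$, so
\[
\frac{|\Phi(\xi)|}{|\xi|^{2s}+|\xi|^2}\leqslant \frac{C_N}{|\xi|^{N+2}},
\]
which is integrable at infinity. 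Together these establish integrability on $\mathbb{R}^n\times(0,+\infty)$.

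Once integrability is in hand, I would invoke Fubini's theorem to exchange the order of integration, integrate in $t$ first using the explicit formula above, and arrive at the identity \eqref{equ:4.6}. There is no substantive obstacle here; the only point requiring some care is the singularity at $\xi=0$, which is handled precisely by the assumption $n\geqslant 2$ (ensuring $n>2s$ for any $s\in(0,1)$), so the weight $\frac{1}{|\xi|^{2s}}$ is locally integrable. The Schwartz decay of $\Phi$ makes the behavior at infinity routine.
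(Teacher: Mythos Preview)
Your proposal is correct and follows essentially the same approach as the paper: compute the $t$-integral explicitly, apply Tonelli to reduce to the single integral $\int_{\mathbb{R}^n}\frac{|\Phi(\xi)|}{|\xi|^{2s}+|\xi|^2}\,d\xi$, and then invoke Fubini for the identity. In fact, the paper's proof simply asserts that this integral ``is finite'' without justification, whereas you supply the missing argument by splitting near and away from the origin and using $n\geqslant 2>2s$; your version is more complete.
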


\begin{proof}
We have that  
\[
\int_0^{+\infty} e^{-t(|\xi|^{2s} + |\xi|^2 )}  dt = \frac{1}{ |\xi|^{2s} + |\xi|^2}.
\]
Therefore, by the Fubini-Tonelli's Theorem,
\begin{align*}
&\iint_{\mathbb{R}^n \times (0, +\infty)} |e^{-t(|\xi|^{2s}+|\xi|^2)} \Phi(\xi)|  d\xi  dt \\
&= \iint_{\mathbb{R}^n \times (0, +\infty)} e^{-t(|\xi|^{2s}+|\xi|^2)} |\Phi(\xi)|  d\xi  dt \\
&= \int_{\mathbb{R}^n} \frac{|\Phi(\xi)|}{|\xi|^{2s}+|\xi|^2}  d\xi,
\end{align*}
which is finite. \qedhere
\end{proof}

With this preparatory work, we can now check that $\mathcal{Z}$ is the fundamental solution of the mixed order operator \(-\Delta +(-\Delta)^s\), as clarified by the following result.

\begin{lemma}\label{lem4.4} Let $n\geqslant2,$
the Fourier transform of \(\mathcal{Z}\) equals \(\frac{1}{|\xi|^{2s}+|\xi|^2}\) in the sense of distribution.

More explicitly, for every \(\phi : \mathbb{R}^n \to \mathbb{R}\) in the Schwartz space, we have that
\begin{equation}\label{lemm4.7}
\int_{\mathbb{R}^n} \mathcal{Z}(x) \phi(x)  dx = \int_{\mathbb{R}^n} \frac{\hat{\phi}(\xi)}{|\xi|^{2s}+|\xi|^2}  d\xi. 
\end{equation}
\end{lemma}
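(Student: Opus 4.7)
The plan is to unwind the definitions of $\mathcal{Z}$ and $\mathcal{H}$, apply Fubini three times in succession, and then collapse the resulting Gaussian--Fourier integral via the previous lemma. The identity \eqref{equ:4.6} does almost all the work once the three integrals in $x$, $t$ and the Fourier variable $\xi$ are brought into the right configuration.

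Step one is to justify swapping the $x$- and $t$-integrals. Since $\phi$ is Schwartz it decays faster than any polynomial, while by Lemma~\ref{lem4.1} the function $\mathcal{Z}$ has at worst an $|x|^{-(n-2)}$ (or $|\ln|x||$ when $n=2$) singularity at the origin and an $|x|^{-(n-2s)}$ tail at infinity, both of which integrate against $|\phi|$. Hence $\mathcal{Z}\phi\in L^{1}(\mathbb{R}^n)$, and applying Fubini--Tonelli to the nonnegative integrand $\mathcal{H}(x,t)|\phi(x)|$ gives
\begin{equation*}
\int_{\mathbb{R}^n}\mathcal{Z}(x)\phi(x)\,dx=\int_0^{+\infty}\!\!\int_{\mathbb{R}^n}\mathcal{H}(x,t)\phi(x)\,dx\,dt.
\end{equation*}

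Step two is to insert the definition \eqref{heat kernel} of $\mathcal{H}$ and swap the $x$- and $\xi$-integrals. For each fixed $t>0$, the integrand $e^{-t(|\xi|^{2}+|\xi|^{2s})+2\pi i x\cdot\xi}\phi(x)$ is dominated in absolute value by the $L^{1}(\mathbb{R}^n_{x}\times\mathbb{R}^n_{\xi})$ function $e^{-t(|\xi|^{2}+|\xi|^{2s})}|\phi(x)|$, so Fubini yields
\begin{equation*}
\int_{\mathbb{R}^n}\mathcal{H}(x,t)\phi(x)\,dx=\int_{\mathbb{R}^n}e^{-t(|\xi|^{2}+|\xi|^{2s})}\Phi(\xi)\,d\xi,
\end{equation*}
where $\Phi(\xi):=\int_{\mathbb{R}^n}\phi(x)e^{2\pi i x\cdot\xi}\,dx$ is itself a Schwartz function.

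Step three applies \eqref{equ:4.6} with this choice of $\Phi$ to conclude
\begin{equation*}
\int_{\mathbb{R}^n}\mathcal{Z}(x)\phi(x)\,dx=\int_{\mathbb{R}^n}\frac{\Phi(\xi)}{|\xi|^{2s}+|\xi|^{2}}\,d\xi.
\end{equation*}
Since the denominator is even in $\xi$, the change of variable $\xi\mapsto-\xi$ converts $\Phi(\xi)$ into $\hat{\phi}(\xi)$ (using the standard convention $\hat{\phi}(\xi)=\int\phi(x)e^{-2\pi i x\cdot\xi}\,dx$), which delivers \eqref{lemm4.7}.

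The only delicate point is the first Fubini swap: $\mathcal{Z}$ is not globally integrable on $\mathbb{R}^n$, so the $L^{1}$-control needed for Fubini--Tonelli cannot come from $\mathcal{Z}$ alone and must be furnished by pairing it with the rapid decay of the Schwartz test function, which is exactly where the quantitative estimates of Lemma~\ref{lem4.1} enter. The remaining two Fubini applications and the final change of variable are routine.
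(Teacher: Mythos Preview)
Your proof is correct and follows essentially the same three-Fubini route as the paper: swap $x$ and $t$ via the definition of $\mathcal{Z}$, swap $x$ and $\xi$ inside the heat-kernel representation, then invoke \eqref{equ:4.6}. The only cosmetic differences are that the paper cites Lemma~\ref{lem4.2} (positivity) rather than Lemma~\ref{lem4.1} for the first Fubini step and appeals to an external lemma for the second, whereas you justify both directly; and the paper skips your final reflection $\xi\mapsto-\xi$ by implicitly using that $e^{-t(|\xi|^2+|\xi|^{2s})}$ is even, so $\mathcal{H}(\cdot,t)$ equals its own Fourier transform evaluated at $\pm x$, yielding $\hat\phi$ immediately.
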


\begin{proof}
By \eqref{Bessel} and the Fubini-Tonelli's Theorem (whose validity is a consequence of Lemma~\ref{lem4.2}), we have that
\begin{align*}
\int_{\mathbb{R}^n} \mathcal{Z}(x) \phi(x)  dx &= \int_{\mathbb{R}^n} \left( \int_0^{+\infty}   \mathcal{H}(x,t)  dt \right) \phi(x)  dx \\
&= \int_0^{+\infty} \left( \int_{\mathbb{R}^n}   \mathcal{H}(x,t) \phi(x)  dx \right)  dt.
\end{align*}
Hence, by \eqref{heat kernel},
\[
\int_{\mathbb{R}^n} \mathcal{Z}(x)  \phi(x)  dx = \int_0^{+\infty} \left( \int_{\mathbb{R}^n} \left( \int_{\mathbb{R}^n} e^{-t(|\xi|^{2s}+|\xi|^2)+2\pi i x \cdot \xi}  \phi(x)  d\xi \right)  dx \right)  dt.
\]
This and the Fubini-Tonelli's Theorem (whose validity is a consequence of \cite[Lemma B.4]{SerenaDipierro2025}) yield that
\begin{align*}
&\int_{\mathbb{R}^n} \mathcal{Z}(x)  \phi(x)  dx \\
&= \int_0^{+\infty} \left( \int_{\mathbb{R}^n} \left( \int_{\mathbb{R}^n} e^{-t(|\xi|^{2s}+|\xi|^2)+2\pi i x \cdot \xi}  \phi(x)  dx \right)  d\xi \right)  dt \\
&= \int_0^{+\infty} \left( \int_{\mathbb{R}^n} e^{-t(|\xi|^{2s}+|\xi|^2)}  \hat{\phi}(\xi)  d\xi \right)  dt.
\end{align*}
From this and \eqref{equ:4.6} (utilized here with \(\Phi := \hat{\phi}\)) we arrive at \eqref{lemm4.7}, as desired. 
\end{proof}

}}
	
	\section{Regularity theory  for viscosity solutions }
	In this section, we mainly address the regularity of weak solutions of  \eqref{equ1} (also shown to be viscosity solutions). In Sections \ref{sec3} and \ref{sec4}, we first obtain the $C^{1,\alpha}$ and $C^{2,\alpha}$-regularity of weak solutions to equation \eqref{equ1}. Based on this regularity result, in Section \ref{sec:existence of vs}, we prove that these weak solutions  are also viscosity solutions.
	
	\subsection{The regularity of weak solutions for $p\in(0,1)$}\label{sec3}
	In this section we prove a H\"older estimate of solution to the equation \eqref{equ1} when $p\in(0,1).$ Our proof is based on the arguments developed in \cite{MR2735074,MR2863859}, similar H\"older estimates with very different proofs can be found in \cite{MR2494809} for symmetric kernels and in \cite{MR2974279} for non-symmetric kernels.
	
	\begin{lemma} \label{lem:4.2l}
Let $s \in (0,1)$, $\alpha \in (0, \min\{1, 2s\})$, and define $\phi(x) = C_1 |x|^\alpha$. Let
\[
Q= \int_{\mathcal{C}} \frac{\phi(a + 2z) + \phi(a - 2z) - 2\phi(a)}{|z|^{n+2s}} dz,
\]
and
\[
\mathcal{C} = \left\{ z \in \mathbb{R}^n : |z| < \eta_1 |a|,\ |z \cdot a| \geqslant (1 - \eta_2)|a||z| \right\},
\]
with $\eta_1, \eta_2 \in (0, 1/2)$ being constants depending only on $\alpha$.

Then there exists a constant $N = N(n, \alpha) > 0$ such that
\[
Q \leqslant -N C_1 |a|^{\alpha - 2s}.
\]
\end{lemma}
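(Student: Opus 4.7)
The plan is to reduce the estimate to a normalized situation by scaling and rotation, exploit the concavity of $|x|^{\alpha}$ in the radial direction to obtain a pointwise negative upper bound on the second difference, and finally integrate in polar coordinates.

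\emph{Scaling and rotation.} Since $\phi(\lambda x) = \lambda^{\alpha} \phi(x)$, the change of variables $z = |a| y$ transforms $Q$ into
\[
Q = C_1 |a|^{\alpha - 2s} \int_{\mathcal{C}'} \frac{|\hat{a} + 2y|^{\alpha} + |\hat{a} - 2y|^{\alpha} - 2}{|y|^{n+2s}} \, dy,
\]
where $\hat{a} := a/|a|$ and $\mathcal{C}' := \{ y \in \R^n : |y| < \eta_1,\ |y \cdot \hat{a}| \geqslant (1 - \eta_2)|y|\}$, which is independent of $|a|$. By rotational invariance I may assume $\hat{a} = e_1$, so it suffices to bound the resulting integral from above by a strictly negative constant depending only on $n$ and $\alpha$.

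\emph{Pointwise second-difference estimate.} For a unit vector $v \in S^{n-1}$, set $\sigma := v \cdot e_1$ and $h_v(t) := (1 + 2\sigma t + t^2)^{\alpha/2}$, so that for $y = rv$ the numerator in the normalized integral equals $h_v(2r) + h_v(-2r) - 2 h_v(0)$. A direct calculation gives
\[
h_v''(t) = \alpha\, (1 + 2\sigma t + t^2)^{\alpha/2 - 2} \bigl[\, 1 + (\alpha - 2)\sigma^2 + 2(\alpha - 1)\sigma t + (\alpha - 1) t^2 \,\bigr],
\]
hence $h_v''(0) = \alpha \bigl[1 - (2 - \alpha)\sigma^2\bigr] \to \alpha(\alpha - 1) < 0$ as $\sigma \to \pm 1$. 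By continuity in $(\sigma, t)$, I can select $\eta_1, \eta_2 \in (0, 1/2)$ depending only on $\alpha$ such that $h_v''(t) \leqslant \tfrac{1}{2}\alpha(\alpha - 1) < 0$ whenever $|\sigma| \geqslant 1 - \eta_2$ and $|t| \leqslant 2\eta_1$. Applying the integral form of Taylor's remainder
\[
h_v(2r) + h_v(-2r) - 2 h_v(0) = \int_0^{2r} (2r - u)\bigl[\, h_v''(u) + h_v''(-u) \,\bigr] du
\]
then yields $|e_1 + 2y|^{\alpha} + |e_1 - 2y|^{\alpha} - 2 \leqslant 2\alpha(\alpha - 1) r^2$ for every $y = rv \in \mathcal{C}'$.

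\emph{Integration.} Passing to polar coordinates $y = r \omega$ with $\omega$ in the spherical cap $\Sigma := \{\omega \in S^{n-1} : |\omega \cdot e_1| \geqslant 1 - \eta_2 \}$,
\[
\int_{\mathcal{C}'} \frac{|e_1 + 2y|^{\alpha} + |e_1 - 2y|^{\alpha} - 2}{|y|^{n+2s}} \, dy \leqslant 2\alpha(\alpha - 1) |\Sigma| \int_0^{\eta_1} r^{1 - 2s} dr = \frac{2\alpha(\alpha - 1) |\Sigma| \eta_1^{2 - 2s}}{2 - 2s},
\]
which is finite and strictly negative since $s \in (0,1)$ and $\alpha \in (0, 1)$. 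Combined with the scaling identity above, this gives $Q \leqslant - N C_1 |a|^{\alpha - 2s}$ with $N(n, \alpha) := 2\alpha(1 - \alpha)|\Sigma|\eta_1^{2 - 2s}/(2 - 2s) > 0$. The delicate point will be choosing $\eta_1$ and $\eta_2$ small enough so that the negative sign of $h_v''(0)$ survives the perturbation by $t$ and $\sigma$: the term $(\alpha - 1) t^2$ in the bracket is helpful, but the cross term $2(\alpha - 1)\sigma t$ can have either sign and must be dominated by $1 + (\alpha - 2)\sigma^2 \approx \alpha - 1$ through the explicit smallness of $\eta_1$ and $\eta_2$; the hypothesis $\alpha < \min\{1, 2s\}$ is used both to ensure concavity of the radial profile and to keep $\alpha - 2s < 0$ so that the final exponent of $|a|$ is negative.
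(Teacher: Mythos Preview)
Your argument is correct and follows essentially the same strategy as the paper: both bound the second derivative of $t\mapsto |a+2tz|^{\alpha}$ from above by a negative constant on the cone $\mathcal{C}$ by choosing $\eta_1,\eta_2$ small (depending only on $\alpha$), and then integrate. The only differences are cosmetic: you normalize first by scaling and rotation to $a=e_1$ and use the integral form of Taylor's remainder, whereas the paper works directly with $\varphi(t)=C_1|a+2tz|^{\alpha}$ and applies the mean-value form of the second difference $\varphi(1)+\varphi(-1)-2\varphi(0)=\varphi''(t_0)$; the resulting constants coincide up to harmless factors, and in both proofs the constant $N$ actually depends on $s$ as well through the factor $\eta_1^{2-2s}/(2-2s)$.
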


\begin{proof}
Define $
\varphi(t):= \phi(a + 2tz) = C_1 |a + 2tz|^\alpha.
$
We compute the first and second derivatives of $\varphi$:

\begin{itemize}
\item First derivative:
\[
\varphi'(t) = 2C_1 \alpha |a + 2tz|^{\alpha - 2} (a + 2tz) \cdot z,
\]

\item Second derivative:
\[
\varphi''(t) = 4C_1 \alpha \left[ (\alpha - 2) |a + 2tz|^{\alpha - 4} \left( (a + 2tz) \cdot z \right)^2 + |a + 2tz|^{\alpha - 2} |z|^2 \right].
\]
\end{itemize}
On the set $\mathcal{C}$, we have the estimates, for all $t\in[-1,1],$
\begin{align*}
|a + 2tz| &\leqslant (1 + 2\eta_1)|a|, \\
|(a + 2tz) \cdot z| &\geqslant (1 - \eta_2)|a||z| - 2|z|^2 \geqslant (1 - 2\eta_1 - \eta_2)|a||z|.
\end{align*}
Since $\alpha - 2 < 0$, we obtain:
\[
\varphi''(t) \leqslant 4C_1 \alpha |a + 2tz|^{\alpha - 4} \left[ (\alpha - 2)(1 - 2\eta_1 - \eta_2)^2 + (1 + 2\eta_1)^2 \right] |a|^2 |z|^2.
\]

By choosing $\eta_1, \eta_2$ sufficiently small (depending only on $\alpha\in(0,1)$) such that
\[
(\alpha - 2)(1 - 2\eta_1 - \eta_2)^2 + (1 + 2\eta_1)^2 \leqslant \frac{\alpha - 1}{2} < 0,
\]
we get:
\[
\varphi''(t) \leqslant -2C_1 \alpha (1 - \alpha) |a + 2tz|^{\alpha - 4} |a|^2 |z|^2.
\]
Using the bound $|a + 2tz|^{\alpha - 4} \geqslant 2^{\alpha - 4} |a|^{\alpha - 4}$, we further obtain:
\[
\varphi''(t) \leqslant -2^{\alpha - 3} C_1 \alpha (1 - \alpha) |a|^{\alpha - 2} |z|^2\quad t\in[-1,1],\quad z\in\mathcal{C}.
\]
By the mean value theorem for second differences, there exists $t_0 \in (-1, 1)$ such that
\[
\phi(a + 2z) + \phi(a - 2z) - 2\phi(a) = \varphi(1) + \varphi(-1) - 2\varphi(0) = \varphi''(t_0).
\]
Therefore,
\begin{equation*}
\begin{aligned}
Q &\leqslant -\int_{\mathcal{C}}\frac{ 2^{\alpha - 3} C_1 \alpha (1 - \alpha) |a|^{\alpha - 2} |z|^2}{|z|^{n+2s}}  dz=-2^{\alpha-3}C_1\alpha(1-\alpha)|a|^{\alpha-2}\int_{\mathcal{C}}\frac{|z|^2}{|z|^{n+2s}}dz\\
&=-NC_1|a|^{\alpha-2s},
\end{aligned}
\end{equation*}
where $N=N(\alpha,n)>0.$
This completes the proof.
\end{proof}

	\begin{lemma}[$C^\alpha$-estimate] \label{thm:4.1}
Let  $0 < s < 1$, $1/2 \leqslant r < R < 1$, and $f \in L^\infty(B_1)$. 
Let $u \in C_{loc}^2(B_1) \cap L_1(\mathbb{R}^n, \omega)$ with $\omega(x) = \frac{1}{1 + |x|^{n+2s}}$ such that
\[
\mathcal{L}u:= f \quad \text{in } B_R.
\]
Then for any $\alpha \in (0, \min\{1, 2s\})$, we have
\begin{align*}
&[u]_{C^\alpha(B_r)}\\ &\leqslant N \left( (R - r)^{-\alpha} \sup_{B_R} |u|+ (R - r)^{2s-\alpha-2} \sup_{B_R} |u| + (R - r)^{-n-\alpha} \|u\|_{L_1(\mathbb{R}^n, \omega)} + (R - r)^{2s - \alpha} \text{osc}_{B_R} f \right),
\end{align*}
where $N = N(n, s, \alpha)$.
\end{lemma}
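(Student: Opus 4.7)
The plan is to prove the local Hölder seminorm estimate by combining the cone-barrier produced in Lemma~\ref{lem:4.2l} with a maximum-principle / iteration-on-scales argument of the type developed by Silvestre~\cite{MR2735074} and Caffarelli-Chan-Vasseur~\cite{MR2863859}. The exponent range $\alpha \in (0, \min\{1, 2s\})$ is exactly what makes the radial power $|x|^\alpha$ available as a comparison function with the right sign for the nonlocal part of $\mathcal{L}$, while the curvature of the local Laplacian is absorbed as a lower-order correction at this scale.

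First I would reduce the estimate to a model scale. For a fixed $x_0 \in B_r$ set $\rho := \tfrac{1}{2}(R-r)$ and rescale $\tilde{u}(y) := u(x_0 + \rho y)$. Because $\mathcal{L}$ is not scale-invariant (as stressed in the introduction), $\tilde u$ solves
\[
-\Delta \tilde{u}(y) + \rho^{2-2s}(-\Delta)^s \tilde{u}(y) = \rho^{2}\, f(x_0 + \rho y) \quad \text{in } B_1,
\]
and tracking each term through this change of variables is exactly what produces the four different powers of $R-r$ in the final estimate: $(R-r)^{-\alpha}$ and $(R-r)^{2s-\alpha-2}$ come from the two distinct operator scalings applied to $\sup|u|$, $(R-r)^{-n-\alpha}$ comes from the tail of the fractional Laplacian weighted by $\omega$, and $(R-r)^{2s-\alpha}$ comes from the oscillation of $f$.

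At the unit scale the task becomes bounding the oscillation of $\tilde u$ at a slightly smaller ball by a dimensional multiple of the normalized right-hand side. I would argue pointwise: if at some interior $x_* \in B_{3/4}$ the function $\tilde u$ rises faster than $C_1|x_* - x_0|^\alpha$ from $\tilde u(x_0)$, then $\tilde u - \tilde u(x_0) - C_1|x-x_0|^\alpha$ has a positive value at $x_*$. By comparing with a suitable translated and truncated version of the barrier $\phi(x) := \tilde u(x_0) + C_1|x - x_0|^\alpha$ at a touching point, the local term contributes a controlled curvature coming from $\Delta |x|^\alpha = \alpha(\alpha+n-2)|x|^{\alpha-2}$, and Lemma~\ref{lem:4.2l} bounds the cone part of $(-\Delta)^s(\phi-\tilde u)$ by $-NC_1 |x_*-x_0|^{\alpha-2s}$. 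Combining these with the equation $\mathcal{L}\tilde u = \tilde f$ yields a contradiction as soon as $C_1$ exceeds the stated combination of norms, which closes the oscillation bound; Hölder continuity with seminorm $\leq C_1$ then follows by letting $x \to x_0$ along rays inside the cone and iterating over $x_0 \in B_r$.

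The main obstacle is controlling the nonlocal tail cleanly, since Lemma~\ref{lem:4.2l} only gives a sign for the integral over the cone $\mathcal{C}$ and the complementary part of $(-\Delta)^s(\phi-\tilde u)(x_*)$ has no definite sign. I would handle this by a two-scale tail split: on the intermediate annulus $B_{R}\setminus(\mathcal{C}\cup B_{2|x_*-x_0|}(x_*))$ the second difference of $\tilde u$ is bounded uniformly by $\sup_{B_R}|\tilde u|$, while on the far region $\mathbb{R}^n\setminus B_{R}$ it is absorbed into the weighted norm $\|\tilde u\|_{L^1(\mathbb{R}^n,\omega)}$. Matching the scaling of each of these pieces against the negative cone contribution from Lemma~\ref{lem:4.2l} is precisely what forces the peculiar exponents $(R-r)^{2s-\alpha-2}\sup|u|$ and $(R-r)^{-n-\alpha}\|u\|_{L^1(\mathbb{R}^n,\omega)}$ to appear in the statement. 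Once this tail bookkeeping is in place, undoing the rescaling $\tilde u \mapsto u$ yields the claimed seminorm estimate with constant $N = N(n,s,\alpha)$.
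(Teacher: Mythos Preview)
Your overall strategy---cone barrier from Lemma~\ref{lem:4.2l} plus a maximum-principle contradiction---is the right flavour, but the mechanism you describe is a \emph{single-variable} touching argument (barrier $\phi(x)=\tilde u(x_0)+C_1|x-x_0|^\alpha$ touching $\tilde u$ at one point $x_*$), whereas the paper's proof is a \emph{doubling-of-variables} argument of Ishii--Lions type. The paper works directly with
\[
M(x,y):=w(x)-w(y)-C_1|x-y|^\alpha-C_2|x-x_0|^2
\]
on $\mathbb{R}^n\times\mathbb{R}^n$, where $w=\mathds 1_{B_R}u$ is a sharp cutoff of $u$ (no rescaling is performed). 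Assuming $M$ has a positive maximum at $(\bar x,\bar y)$, one evaluates the equation at \emph{both} points and subtracts, obtaining $\mathcal{L}w(\bar x)-\mathcal{L}w(\bar y)$ on one side and $f(\bar x)-f(\bar y)$ on the other. This is exactly why the final estimate involves $\operatorname{osc}_{B_R}f$ rather than $\|f\|_{L^\infty}$, a feature your one-point scheme cannot reproduce. The doubling also explains the specific form of Lemma~\ref{lem:4.2l}: the symmetric second difference $\phi(a+2z)+\phi(a-2z)-2\phi(a)$ with $a=\bar x-\bar y$ arises by shifting $x$ and $y$ in \emph{opposite} directions at the maximum of $M$, while shifting them in the same direction gives the bound by the second difference of $\Gamma$. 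The quadratic penalty $\Gamma(x)=C_2|x-x_0|^2$, entirely absent from your sketch, is what localizes $\bar x$ near $x_0$; its choice $C_2\sim (R-r)^{-2}\sup_{B_R}|u|$ is precisely the source of the $(R-r)^{2s-\alpha-2}\sup|u|$ term.

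Your rescaling and two-scale tail split are plausible bookkeeping, but the paper avoids rescaling entirely: the cutoff $w=\mathds 1_{B_R}u$ converts the far tail into a bounded forcing correction $g$ with $\|g\|_{L^\infty}\le N r_1^{-n-2s}\|u\|_{L_1(\mathbb{R}^n,\omega)}$, after which the entire argument runs at the original scale and the three integrals ($|z|\ge r_1$, $B_{r_1}\setminus\mathcal{C}$, and $\mathcal{C}$) are estimated separately. To turn your outline into an actual proof you must replace the one-point touching by the doubled functional $M(x,y)$, introduce the quadratic localizer $\Gamma$, and evaluate the equation at both $\bar x$ and $\bar y$; without these ingredients the contradiction step does not close and the $\operatorname{osc}_{B_R}f$ dependence is lost.
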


\begin{proof}\textbf{1. Localization problem:}
Denote $r_1 = (R - r)/2$, and $\bar{r} = (R + r)/2$. Set $w(x) = \mathds{1}_{B_R}(x) u(x)$. For $x \in B_{\bar{r}}$, we have $\Delta u(x) = \Delta w(x)$ and
\begin{align*}
(-\Delta)^s u(x) &= C_{n,s} P.V.\int_{\mathbb{R}^n}\frac{u(x)-u(x+z)}{|z|^{n+2s}} dz \\
&=C_{n,s} P.V. \int_{|z|<r_1}\frac{u(x)-u(x+z)}{|z|^{n+2s}} dz + \int_{|z|\geqslant r_1}\frac{u(x)-u(x+z)}{|z|^{n+2s}} dz\\
&= C_{n,s} P.V. \int_{|z|<r_1}\frac{w(x)-w(x+z)}{|z|^{n+2s}} dz + \int_{|z|\geqslant r_1}\frac{w(x)-u(x+z)}{|z|^{n+2s}} dz\\
&=C_{n,s} P.V. \int_{|z|<r_1}\frac{w(x)-w(x+z)}{|z|^{n+2s}} dz+\int_{|z|\geqslant r_1}\frac{w(x)-w(x+z)}{|z|^{n+2s}} dz \\
&\quad+ \int_{|z|\geqslant r_1}\frac{w(x+z)-u(x+z)}{|z|^{n+2s}} dz=(-\Delta)^sw-\int_{|z|\geqslant r_1}\frac{u(x+z)-w(x+z)}{|z|^{n+2s}} dz.
\end{align*}
Hence in $B_{\bar{r}}$,
\[
 - \mathcal{L}w(x) = g(x) - f(x),
\]
where
\[
g(x) = -\int_{|z| \geqslant r_1} \frac{u(x+z)-w(x+z)}{|z|^{n+2s}} dz.
\]
Note that
\begin{equation}\label{eq:2.1.}
\|g\|_{L^\infty(B_R)} \leqslant N r_1^{-n-2s} \|u\|_{L_1(\mathbb{R}^n, \omega)}, 
\end{equation}
where $N = N(n,s)$.

\textbf{2. Construct comparison functions:} For $x_0 \in B_r$, set
\[
M(x, y) := w(x) - w(y) - \phi(x - y) - \Gamma(x),
\]
where $\phi(z) = C_1 |z|^{\alpha}$, $\alpha \in (0, \min\{1, 2s\})$, and $\Gamma(x) = C_2 |x - x_0|^2$. We will find $C_1, C_2 \in (0, \infty)$ depending only on $n, s, r_1, \|u\|_{L^\infty(B_R)}, \|u\|_{L_1(\mathbb{R}^n, \omega)}, \text{osc}_{B_R} f,$  but independent of the choice of $x_0 \in B_r$, such that
\begin{equation}\label{eq:2.1}
\sup_{x, y \in \mathbb{R}^n} M(x, y) \leqslant 0. \end{equation}

To prove \eqref{eq:2.1}, we first take
\[
C_2 := 8 r_1^{-2} \|u\|_{L^\infty(B_R)}.
\]
Then, for $x \in \mathbb{R}^n \setminus B_{r_1/2}(x_0)$,
\[
w(x) - w(y) \leqslant 2 \|u\|_{L^\infty(B_R)} \leqslant C_2 |x - x_0|^2.
\]
This shows that
\begin{equation}\label{eq:2.2}
M(x, y) \leqslant 0, \quad x \in \mathbb{R}^n \setminus B_{r_1/2}(x_0). 
\end{equation}

To get a contradiction, assume there exist $x, y \in \mathbb{R}^n$ such that $M(x, y) > 0$. By \eqref{eq:2.2} we know that $x \in B_{r_1/2}(x_0) \subset B_{(\bar{r}+r)/2}$. Moreover, if $M(x, y) > 0$, then
\begin{equation}\label{eq:2.3}
w(x) - w(y) > C_1 |x - y|^{\alpha}, \quad \text{i.e.,} \quad |x - y|^{\alpha} < \frac{2 \|u\|_{L^\infty(B_R)}}{C_1}. 
\end{equation}
If we take a sufficiently large $C_1$ so that $C_1 \geqslant 2^{1+\alpha} r_1^{-\alpha} \|u\|_{L^\infty(B_R)}$, the above inequalities show that $y \in B_{\bar{r}}$.

Therefore, the assumption that $M(x, y) > 0$ for some $x, y \in \mathbb{R}^n$ (and the continuity of $u$ on $B_R$) enables us to assume that there exist $\bar{x}, \bar{y} \in B_{\bar{r}}$ satisfying $\sup_{x,y \in \mathbb{R}^n} M(x, y) = M(\bar{x}, \bar{y}) > 0$.

\textbf{3. Construct a contradiction:} Note that at $\bar{x}, \bar{y} \in B_{\bar{r}}$ we have
\begin{align*}
g(\bar{y}) - f(\bar{y}) &=  - \mathcal{L}w(\bar{y}), \\
-g(\bar{x}) + f(\bar{x}) &=  \mathcal{L}w(\bar{x}).
\end{align*}
Thus, it follows that
\begin{equation}\label{eq:2.5}
-2 \|g\|_{\mathcal{L}^\infty(B_R)} - \text{osc}_{B_R} f \leqslant  \mathcal{L}w(\bar{x}) - \mathcal{L}w(\bar{y}) := I. 
\end{equation}

Set \[
J(\bar{x}, \bar{y}, z) = w(\bar{x} + z) + w(\bar{x} - z) - 2w(\bar{x}) - w(\bar{y} + z) - w(\bar{y} - z) + 2w(\bar{y}).
\] 
Set $a=\bar{x}-\bar{y},$ since $M(x, y)$ attains its maximum at $\bar{x}, \bar{y}$, we have
\begin{equation}\label{eq:2.6}
\begin{aligned}
\Delta w(\overline{x})\leqslant\Delta\phi(\bar{x} - \bar{y})+\Delta\Gamma(\bar{x})=N(\alpha,n)C_1|a|^{\alpha-2}+N(n)C_2,\\
 \Delta w(\bar{y})\geqslant-\Delta\phi(\bar{x} - \bar{y})=-N(\alpha,n)C_1|a|^{\alpha-2},
 \end{aligned}
\end{equation}
and 
\begin{align*}
w(\bar{x} + z) - w(\bar{y} + z) - \phi(\bar{x} - \bar{y}) - \Gamma(\bar{x} + z) &\leqslant w(\bar{x}) - w(\bar{y}) - \phi(\bar{x} - \bar{y}) - \Gamma(\bar{x}), \\
w(\bar{x} - z) - w(\bar{y} - z) - \phi(\bar{x} - \bar{y}) - \Gamma(\bar{x} - z) &\leqslant w(\bar{x}) - w(\bar{y}) - \phi(\bar{x} - \bar{y}) - \Gamma(\bar{x})
\end{align*}
for all $z \in \mathbb{R}^n$. These two inequalities lead us to
\begin{equation}\label{eq:2.4}
J(\bar{x}, \bar{y}, z) \leqslant \Gamma(\bar{x} + z) + \Gamma(\bar{x} - z) - 2\Gamma(\bar{x}), \quad z \in \mathbb{R}^n. 
\end{equation}
By again the assumption that $M(x, y)$ has the maximum at $\bar{x}, \bar{y}$, we have
\begin{align*}
w(\bar{x} + z) - w(\bar{y} - z) - \phi(\bar{x} - \bar{y} + 2z) - \Gamma(\bar{x} + z) &\leqslant w(\bar{x}) - w(\bar{y}) - \phi(\bar{x} - \bar{y}) - \Gamma(\bar{x}), \\
w(\bar{x} - z) - w(\bar{y} + z) - \phi(\bar{x} - \bar{y} - 2z) - \Gamma(\bar{x} - z) &\leqslant w(\bar{x}) - w(\bar{y}) - \phi(\bar{x} - \bar{y}) - \Gamma(\bar{x})
\end{align*}
for all $z \in \mathbb{R}^n$. Hence it follows that, for any $z \in \mathbb{R}^n$,
\begin{equation}\label{eq:2.8}
J(\bar{x}, \bar{y}, z) \leqslant \phi(\bar{x} - \bar{y} + 2z) + \phi(\bar{x} - \bar{y} - 2z) - 2\phi(\bar{x} - \bar{y}) + \Gamma(\bar{x} + z) + \Gamma(\bar{x} - z) - 2\Gamma(\bar{x}). 
\end{equation}

Since $\bar{x}, \bar{y}$ satisfy \eqref{eq:2.3}, we have $|a| < r_1/2$. Also set, for some $\eta_1, \eta_2 \in (0, 1/2)$,
\[
\mathcal{C} = \{ |z| < \eta_1 |a| : |z \cdot a| \geqslant (1 - \eta_2)|a||z| \}.
\]
Then $\mathcal{C} \subset B_{r_1}$ and
\begin{equation}\label{eq:2.6}
|2I|\leqslant \int_{|z| \geqslant r_1} \frac{J(\bar{x}, \bar{y}, z)}{|z|^{n+2s}} dz + \int_{B_{r_1} \setminus \mathcal{C}} \frac{J(\bar{x}, \bar{y}, z)}{|z|^{n+2s}} dz + \int_{\mathcal{C}} \frac{J(\bar{x}, \bar{y}, z)}{|z|^{n+2s}} dz := J_1 + J_2 + J_3. 
\end{equation}

Note that
\[
J_1 \leqslant N(n,s) r_1^{-2s} \|u\|_{L^\infty(B_R)}.
\]
By \eqref{eq:2.4} it follows
\[
J_2 \leqslant \int_{B_{r_1} \setminus \mathcal{C}} \frac{\Gamma(\bar{x} + z) + \Gamma(\bar{x} - z) - 2\Gamma(\bar{x})}{|z|^{n+2s}}  dz \leqslant N r_1^{2-2s} C_2,
\]
where  $N=N(n),$ which is independent of $\eta_1, \eta_2$ in the definition of $\mathcal{C}$.

Now using \eqref{eq:2.5} we obtain
\begin{equation*}
\begin{aligned}
J_3 &\leqslant \int_{\mathcal{C}} \frac{\phi(\bar{x} - \bar{y} + 2z) + \phi(\bar{x} - \bar{y} - 2z) - 2\phi(\bar{x} - \bar{y})}{|z|^{n+2s}} dz + \int_{\mathcal{C}} \frac{\Gamma(\bar{x} + z) + \Gamma(\bar{x} - z) - 2\Gamma(\bar{x})}{|z|^{n+2s}} \\
&:= J_{3,1} + J_{3,2}.
\end{aligned}
\end{equation*}
The term $J_{3,2}$ is again bounded by $N r_1^{2-2s} C_2$, where $N = N(n)$. Finally, by Lemma \ref{thm:4.1},
\[
J_{3,1} \leqslant -N(n, \alpha) C_1 |a|^{\alpha-2s}.
\]

Thus, we get from \eqref{eq:2.6} and the choice of $C_2$ that
\begin{equation}\label{eq:2.10}
I \leqslant N(n, s) r_1^{-2s} \|u\|_{L^\infty(B_R)} - N(n, \alpha) C_1 |a|^{\alpha-2s}. 
\end{equation}
Combining \eqref{eq:2.1.}, \eqref{eq:2.5}, \eqref{eq:2.8}, \eqref{eq:2.6} and \eqref{eq:2.10} we finally have
\begin{equation*}
\begin{aligned}
0 \leqslant 
&N(s,n,\alpha) (\text{osc}_{B_R} f + r_1^{-2s} \|u\|_{L^\infty(B_R)} +r_1^{-2} \|u\|_{L^\infty(B_R)}+ r_1^{-n-2s} \|u\|_{L_1(\mathbb{R}^n,\omega)}) \\
&\quad- N(n, \alpha) C_1 (|a|^{\alpha -2s} -|a|^{\alpha -2}):= G.
\end{aligned}
\end{equation*}
Choose \(C_1\) so that \(C_1 \geqslant 2^{1+\alpha} r_1^{-\alpha} \|u\|_{L^\infty(B_R)}\) as well as
\[
C_1 \geqslant N(n, s,\alpha) r_1^{2s - \alpha} (\text{osc}_{B_R} f + r_1^{-2s} \|u\|_{L^\infty(B_R)}+r_1^{-2} \|u\|_{L^\infty(B_R)} + r_1^{-n-2s} \|u\|_{L_1(\mathbb{R}^n,\omega)}) / N(n, \alpha).
\]
Then, for \(\alpha \in (0, \min\{1, 2s\})\), by \eqref{eq:2.3} \(|a|^{\alpha - 2s} r_1^{2s - \alpha} > 1\) and
\begin{equation*}
\begin{aligned}
G&\leqslant N(n,s) (\text{osc}_{B_R} f + r_1^{-2s} \|u\|_{L^\infty(B_R)}+r_1^{-2} \|u\|_{L^\infty(B_R)} + r_1^{-n-2s} \|u\|_{L_1(\mathbb{R}^n,\omega)})\\
&\quad (1 - |a|^{\alpha - 2s} r_1^{2s - \alpha}-r_1^{2s-\alpha}|a|^{\alpha-2}) < 0.
\end{aligned}
\end{equation*}
This contradicts the fact that \(G \geqslant 0\).

This proves the assertion in the theorem. More specifically, using the fact that $C_1$ and $C_2$ are independent of the choice of $x_0 \in B_r$, we obtain
\[
|u(x) - u(y)| \leqslant C_1 |x - y|^{\alpha}, \quad x, y \in B_r,
\]
where $C_1$ is the right-hand side of the Hölder estimate in Lemma~\ref{thm:4.1}.
\end{proof}

	\begin{corollary} \label{cor:4.3}
Let  $0 < s < 1$, and $f \in L^\infty(B_1)$. Let $u \in C^2_{\mathrm{loc}}(B_1) \cap L_1(\mathbb{R}^n, \omega)$ with $\omega(x) = 1/(1 + |x|^{n+2s})$ such that
\[
\mathcal{L}u  = f
\]
in $B_1$. Then for any $\alpha \in (0, \min\{1, 2s\})$, we have
\begin{equation}\label{equ:2.11}
[u]_{C^\alpha(B_{1/2})} \leqslant N\|u\|_{L_1(\mathbb{R}^n, \omega)} + N \text{osc}_{B_1} f,
\end{equation}
where $N = N(n,s, \alpha)$.
\end{corollary}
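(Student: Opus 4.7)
The plan is to apply Lemma~\ref{thm:4.1} with $r=1/2$ and $R=1$, so that every factor $(R-r)^{\pm\gamma}$ appearing in that lemma becomes an absolute constant depending only on $n, s, \alpha$. This directly gives
\[
[u]_{C^\alpha(B_{1/2})} \leqslant N_0\bigl(\sup_{B_1}|u| + \|u\|_{L_1(\R^n,\omega)} + \text{osc}_{B_1} f\bigr),
\]
with $N_0 = N_0(n,s,\alpha)$. It remains to eliminate the term $\sup_{B_1}|u|$ from the right-hand side, which is where the main work of the proof lies.

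For this absorption, I would apply Lemma~\ref{thm:4.1} on a family of nested balls $B_r \subset B_R \subset B_1$ with $1/2 \leqslant r < R \leqslant 1$, and combine the resulting inequality with the classical $C^\alpha$--$L^1$ interpolation
\[
\sup_{B_R}|u| \leqslant \rho^\alpha\,[u]_{C^\alpha(B_{R+\rho})} + C_n\,\rho^{-n}\,\|u\|_{L^1(B_{R+\rho})}, \qquad R+\rho \leqslant 1,
\]
obtained by the decomposition $u(x) = \bar u_\rho(x) + (u(x)-\bar u_\rho(x))$, where $\bar u_\rho$ is the mean of $u$ on $B_\rho(x)$. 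Using the trivial bound $\|u\|_{L^1(B_1)} \leqslant C\|u\|_{L_1(\R^n,\omega)}$ (since $\omega \geqslant c > 0$ on $B_1$) and tuning $\rho$ as an appropriate positive power of $R-r$, the coefficient of $[u]_{C^\alpha(B_{R+\rho})}$ can be made strictly less than $1$. A standard iteration lemma of Simon--Giaquinta type then converts this into
\[
[u]_{C^\alpha(B_{1/2})} \leqslant N\bigl(\|u\|_{L_1(\R^n,\omega)} + \text{osc}_{B_1} f\bigr),
\]
with $N = N(n,s,\alpha)$, which is exactly the claim.

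The principal technical obstacle lies in balancing the two competing exponents: reducing $\rho$ shrinks the coefficient of the H\"older seminorm on the right but inflates the $\rho^{-n}$ factor in front of the $L^1$-term. The interplay with the $(R-r)^{-n-\alpha}$ prefactor produced by Lemma~\ref{thm:4.1} must be arranged so that a suitable weighted seminorm, for instance $\Phi(r) = (1-r)^{\theta}[u]_{C^\alpha(B_r)}$ with an appropriately chosen exponent $\theta = \theta(n,\alpha,s) > 0$, admits a uniform bound over $r \in [1/2, 1)$; once $\Phi$ is under control, the iteration lemma closes the argument and produces the universal constant $N$.
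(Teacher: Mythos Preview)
Your proposal is correct and follows essentially the same strategy as the paper: apply Lemma~\ref{thm:4.1} on a family of nested balls, replace the $\sup|u|$ term via the $C^\alpha$--$L^1$ interpolation inequality, and absorb the resulting H\"older seminorm by iteration. The only cosmetic difference is that the paper carries out the absorption explicitly with the dyadic sequence $r_k = 1 - 2^{-k-1}$ (multiplying by a geometric weight $2^{-3kn/\alpha}$ and summing over $k$), whereas you package the same step as an appeal to a Simon--Giaquinta iteration lemma; these are two standard formulations of the same argument.
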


\begin{proof}
Set
\[
r_k = 1 - 2^{-k-1}, \quad B_{(k)} = B_{r_k}, \quad k = 0, 1, 2, \ldots.
\]
Theorem \ref{thm:4.1} gives, for $k = 0, 1, 2, \ldots$,
\begin{equation}\label{eq:2.11}
[u]_{C^\alpha(B_{(k)})} \leqslant N_1 \left( 2^{2k} \sup_{B_{(k+1)}} |u| + 2^{(n+\alpha)k} \|u\|_{L_1(\mathbb{R}^n, \omega)} + \text{osc}_{B_{(k+1)}} f \right),
\end{equation}
where $N_1 = N_1(n,s, \alpha)$ is a constant independent of $n$. To estimate the first term on the right-hand side of \eqref{eq:2.11}, by the well-known interpolation inequality, we have
\begin{equation}\label{eq:2.12}
\sup_{B_{(k+1)}} |u| \leqslant \varepsilon [u]_{C^\alpha(B_{(k+1)})} + N \varepsilon^{-n/\alpha} \|u\|_{L_1(B_{(k+1)})}, \quad \forall \epsilon \in (0, 1). 
\end{equation}
Multiplying both sides of Equation \eqref{eq:2.12} by $2^{2k}$, and upon taking $\epsilon = (N_1 2^{2k+3n/\alpha})^{-1},$ we have
\begin{equation*}
\begin{aligned}
2^{2k}\sup_{B_{(k+1)}} |u| &\leqslant 2^{2k}\varepsilon [u]_{C^\alpha(B_{(k+1)})} + N2^{2k} \varepsilon^{-n/\alpha} \|u\|_{L_1(B_{(k+1)})}\\
&\leqslant \frac{1}{N_12^{\frac{3n}{\alpha}}}[u]_{C^\alpha(B_{(k+1)})}+C_12^{\frac{2kn}{\alpha}}\|u\|_{L_1(B_{(k+1)})}\\
&\leqslant \frac{1}{N_12^{\frac{3n}{\alpha}}}[u]_{C^\alpha(B_{(k+1)})}+C_12^{\frac{2kn}{\alpha}}\|u\|_{L_1(B_1)}.
\end{aligned}
\end{equation*}
 We multiply both sides of the above equation by $2^{-3kn/\alpha}$ and and substitute into \eqref{eq:2.11} gives
\begin{equation*}
\begin{aligned}
2^{-\frac{3kn}{\alpha}}[u]_{C^\alpha(B_{(k)})} \leqslant 2^{-\frac{3n(k+1)}{\alpha}}[u]_{C^\alpha(B_{(k+1)})}+N 2^{-\frac{kn}{\alpha}} \|u\|_{L_1(B_1)} +N 2^{(n+\alpha)k-\frac{3kn}{\alpha}} \|u\|_{L_1(\mathbb{R}^n, \omega)} + N 2^{-\frac{3kn}{\alpha}}\text{osc}_{B_{1}} f, 
\end{aligned}
\end{equation*}
let $A_k=2^{-\frac{3kn}{\alpha}}[u]_{C^\alpha(B_{(k)})},$ then sum over $k$ to obtain
\begin{align*}
 \sum_{k=0}^{\infty} A_k
&\leqslant \sum_{k=0}^{\infty} A_{k+1}  + N \sum_{k=0}^{\infty} 2^{-nk/\alpha} \|u\|_{L_1(B_1)} \\
&\quad + N \sum_{k=0}^{\infty} 2^{-3nk/\alpha + (n+\alpha)k} \|u\|_{L_1(\mathbb{R}^n,\omega)} + N \sum_{k=0}^{\infty} 2^{-3nk/\alpha} \text{osc}_{B_1}f,
\end{align*}
which immediately yields \eqref{equ:2.11}. The corollary is proved. \qedhere
\end{proof}
	
	\begin{proposition}\label{pro:2.4} Let \( f \in L^\infty(B_1) \) and let \( u \in \mathcal{X}^{1,2}(B_3) \cap L^\infty(\mathbb{R}^n) \) be a weak solution of
\begin{equation}\label{equ:2.1}\mathcal{L} u = f \quad \text{in} \quad B_1.\end{equation} Then, \( u \in C^\alpha(B_{1/4}) \) for any \( 0 < \alpha < \min\{2s, 1\} \) and \[\|u\|_{C^\alpha(B_{1/4})} \leqslant C \left( \|u\|_{L^\infty(\mathbb{R}^n)} + \|f\|_{L^\infty(B_1)}\right),\] for a suitable \( C > 0 \) depending on \( n \), \( s \) and \( \alpha \).
\end{proposition}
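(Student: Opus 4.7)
The plan is to reduce to Corollary~\ref{cor:4.3} by an approximation argument, since a weak solution of \eqref{equ:2.1} need not be in $C^2_{\mathrm{loc}}(B_1)$ a priori, whereas a mollification is. The strategy is: mollify $u$ on all of $\R^n$, show that the mollified function solves the mollified equation strongly in a slightly smaller ball, apply the interior Hölder estimate of Corollary~\ref{cor:4.3} uniformly in the mollification parameter, and pass to the limit via Arzelà--Ascoli.

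More concretely, fix a standard symmetric mollifier $\eta\in C_c^\infty(\R^n)$, set $\eta_\varepsilon(x):=\varepsilon^{-n}\eta(x/\varepsilon)$, and define $u_\varepsilon:=u*\eta_\varepsilon$ and $f_\varepsilon:=(f\chi_{B_1})*\eta_\varepsilon$. Since $u\in L^\infty(\R^n)$, I have $u_\varepsilon\in C^\infty(\R^n)$ with $\|u_\varepsilon\|_{L^\infty(\R^n)}\leq\|u\|_{L^\infty(\R^n)}$; moreover the integrability of $\omega(x)=1/(1+|x|^{n+2s})$ over $\R^n$ ensures $\|u_\varepsilon\|_{L^1(\R^n,\omega)}\leq C\|u\|_{L^\infty(\R^n)}$, and $\|f_\varepsilon\|_{L^\infty(B_{1-\varepsilon})}\leq\|f\|_{L^\infty(B_1)}$. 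To verify that $\mathcal{L}u_\varepsilon=f_\varepsilon$ holds pointwise in $B_{1-\varepsilon}$, I would take any $\varphi\in C_c^\infty(B_{1-\varepsilon})$, observe that the translate $\varphi(\cdot-z)$ lies in $C_c^\infty(B_1)$ whenever $|z|\leq\varepsilon$ and is therefore admissible in the weak formulation of $u$, integrate the resulting identity against $\eta_\varepsilon(z)\,dz$, and apply Fubini (legitimate since $u\in L^\infty\cap\mathcal{X}^{1,2}$) together with the symmetry of $\eta_\varepsilon$ to transfer the convolution from $\varphi$ onto $u$. Because $u_\varepsilon$ is smooth, the resulting distributional identity becomes the classical pointwise one.

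For $\varepsilon\in(0,1/2)$ the pair $(r,R)=(1/2,1-\varepsilon)$ satisfies $1/2\leq r<R<1$, so repeating the proof of Corollary~\ref{cor:4.3} with this choice of $R$ in Lemma~\ref{thm:4.1} yields, uniformly in $\varepsilon$,
\begin{equation*}
[u_\varepsilon]_{C^\alpha(B_{1/4})}\leq[u_\varepsilon]_{C^\alpha(B_{1/2})}\leq N\bigl(\|u_\varepsilon\|_{L^1(\R^n,\omega)}+\mathrm{osc}_{B_{1-\varepsilon}}f_\varepsilon\bigr)\leq C\bigl(\|u\|_{L^\infty(\R^n)}+\|f\|_{L^\infty(B_1)}\bigr).
\end{equation*}
The family $\{u_\varepsilon\}$ is then uniformly bounded and uniformly equicontinuous on $\overline{B_{1/4}}$; by Arzelà--Ascoli, a subsequence converges uniformly to a continuous function, which by $L^1_{\mathrm{loc}}$ convergence $u_\varepsilon\to u$ must coincide with $u$ almost everywhere, giving $u$ a continuous representative on $\overline{B_{1/4}}$. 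Since the Hölder seminorm is lower semicontinuous under uniform convergence, the bound passes to $u$; combining with $\|u\|_{L^\infty(B_{1/4})}\leq\|u\|_{L^\infty(\R^n)}$ yields the claimed estimate.

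The hard part will be the rigorous verification that mollification commutes with the nonlocal Gagliardo bilinear form in the pointwise-strong sense on $B_{1-\varepsilon}$: the double integral with principal value and the transfer of convolution from the test function onto the solution have to be handled with care, especially since the operator $\mathcal{L}$ does not enjoy scale invariance. However, because $u\in L^\infty(\R^n)\cap\mathcal{X}^{1,2}$ and the test functions are smooth with compact support strictly inside $B_1$, all the Fubini and translation-invariance steps are legitimate, so this is a technical rather than structural obstacle.
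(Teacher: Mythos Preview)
Your proposal is correct and follows essentially the same route as the paper: mollify $u$ and $f$, verify via Fubini and translation invariance that $u_\varepsilon$ solves the mollified equation in a slightly smaller ball, apply Corollary~\ref{cor:4.3} to the smooth $u_\varepsilon$, and pass to the limit. The paper's proof differs only in cosmetic details (it works on $B_{1/2}$ rather than $B_{1-\varepsilon}$ and is terser about the Arzelà--Ascoli step), but the structure and the key technical point---the careful Fubini argument transferring the convolution through the Gagliardo bilinear form---are identical.
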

\begin{proof}
We take \(\rho \in C_0^\infty(B_1;[0,1])\) and we consider the mollifier \(\rho_\varepsilon(x) := \varepsilon^{-n} \rho(x/\varepsilon)\), \(\varepsilon > 0\). We define \(u_\varepsilon := u * \rho_\varepsilon\) and \(f_\varepsilon := f * \rho_\varepsilon\).

Of course, \(u_\varepsilon\) is a smooth function, by construction. Let us show that \(u_\varepsilon\) solves the following equation
\begin{equation} \label{eq:3.2}
-\Delta u_\varepsilon+(-\Delta)^s u_\varepsilon = f_\varepsilon \quad \text{in} \quad B_{1/2}.
\end{equation}
For this, first we observe that for any test function \(\phi\in C_0^{\infty}(B_{1/2})\), we have 
\begin{equation}
\begin{aligned}
\int_{\mathbb{R}^n}-\Delta u_{\epsilon}\phi dx&=\int_{\mathbb{R}^n}\triangledown u_{\epsilon}\cdot\triangledown\phi dx=\int_{\mathbb{R}^n}[(\triangledown u)*\rho_{\epsilon}]\cdot\triangledown\phi dx\\
&=\int_{\mathbb{R}^n}\left[\int_{\mathbb{R}^n}\triangledown u(x+z)\rho_{\epsilon}(z)\triangledown\phi(x)dx\right]dz\\
&=\int_{B_{\epsilon}}\left[\int_{\mathbb{R}^n}\triangledown u(\tilde{x})\rho_{\epsilon}(z)\triangledown\phi(\tilde{x}-z)d\tilde{x}\right]dz,
\end{aligned}
\end{equation}
and since \(u \in \mathcal{X}^{1,2}(B_3) \cap L^\infty(\mathbb{R}^n)\),
\begin{align*}
&\int_{\mathbb{R}^n} \left[ \int_{\mathbb{R}^{2n}} \frac{|u(x+z) - u(y+z)| |\phi(x) - \phi(y)| \rho_\varepsilon(z)}{|x-y|^{n+2s}}  dx  dy \right] dz \\
&= \int_{B_\varepsilon} \left[ \int_{Q(B_1)} \frac{|u(x+z) - u(y+z)| |\phi(x) - \phi(y)| \rho_\varepsilon(z)}{|x-y|^{n+2s}}  dx  dy \right] dz \\
&\leqslant \frac{\varepsilon^{-n}}{2} \int_{B_\varepsilon} \left[ \int_{Q(B_2)} \frac{|u(x) - u(y)|^2}{|x-y|^{n+2s}}  dx  dy + \int_{Q(B_1)} \frac{|\phi(x) - \phi(y)|^2}{|x-y|^{n+2s}}  dx  dy \right] dz \\
&< +\infty.
\end{align*}
 Here we used the notation
\[
Q(B_r) := \mathbb{R}^{2n} \setminus \left( (\mathbb{R}^n \setminus B_r) \times (\mathbb{R}^n \setminus B_r)\right), \quad r > 0.
\]
Hence, by Tonelli's Theorem the function
\[
\mathbb{R}^{2n} \times \mathbb{R}^n \ni (x, y, z) \mapsto \frac{(u(x + z) - u(y + z)) (\phi(x) - \phi(y)) \rho_\varepsilon(z)}{|x - y|^{n+2s}}
\]
belongs to \( L^1(\mathbb{R}^{2n} \times \mathbb{R}^n) \). Then, by Fubini's Theorem and the definition of $u_{\varepsilon},$
\begin{align*}
&\int_{\mathbb{R}^n} \left[ \int_{\mathbb{R}^{2n}} \frac{(u(x+z)-u(y+z)) (\phi(x)-\phi(y)) \rho_\varepsilon(z)}{|x-y|^{n+2s}}  dx  dy \right]  dz \\
&= \int_{\mathbb{R}^{2n}} \left[ \int_{\mathbb{R}^n} \frac{(u(x+z)-u(y+z)) (\phi(x)-\phi(y)) \rho_\varepsilon(z)}{|x-y|^{n+2s}}  dx  dy \right] \\
&= \int_{\mathbb{R}^{2n}} \frac{(u_\varepsilon(x)-u_\varepsilon(y)) (\phi(x)-\phi(y))}{|x-y|^{n+2s}}  dx dy.
\end{align*}
Therefore, we may use Fubini's Theorem and obtain that for any \(\phi \in C_0^\infty(B_{1/2}),\)
\begin{align*}
&\int_{\mathbb{R}^n} f_\varepsilon(x) \phi(x)  dx \\
&= \int_{\mathbb{R}^n} \left[ \int_{\mathbb{R}^n} f(x + z) \phi(x) \rho_\varepsilon(z)  dx \right]  dz \\
&= \int_{B_\varepsilon} \left[ \int_{\mathbb{R}^n} f(\tilde{x}) \phi(\tilde{x} - z) \rho_\varepsilon(z)  d\tilde{x} \right]  dz \\
&= \int_{B_\varepsilon} \left[ \int_{\mathbb{R}^{2n}} \frac{(u(\tilde{x}) - u(\tilde{y})) (\phi(\tilde{x} - z) - \phi(\tilde{y} - z)) \rho_\varepsilon(z)}{|\tilde{x} - \tilde{y}|^{n+2s}}  d\tilde{x}  d\tilde{y} \right]  dz +\int_{B_{\epsilon}}\left[\int_{\mathbb{R}^n}\triangledown u(\tilde{x})\rho_{\epsilon}(z)\triangledown\phi(\tilde{x}-z)d\tilde{x}\right]dz\\
&= \int_{\mathbb{R}^n} \left[ \int_{\mathbb{R}^{2n}} \frac{(u(x + z) - u(y + z)) (\phi(x) - \phi(y)) \rho_\varepsilon(z)}{|x - y|^{n+2s}}  dx  dy \right]  dz+\int_{\mathbb{R}^n}\left[\int_{\mathbb{R}^n}\triangledown u(x+z)\rho_{\epsilon}(z)\triangledown\phi(x)d\tilde{x}\right]dz \\
&= \int_{\mathbb{R}^{2n}} \frac{(u_\varepsilon(x) - u_\varepsilon(y)) (\phi(x) - \phi(y))}{|x-y|^{n+2s}}  dx dy+\int_{\mathbb{R}^n}\triangledown u_{\epsilon}\triangledown\phi dx,
\end{align*}
thanks to the fact that \(u\) is a weak solution of \eqref{equ:2.1}. This means that \(u_\varepsilon\) is a smooth solution of \eqref{eq:3.2}.

As a consequence of this, we may apply Corollary \ref{cor:4.3} and obtain that \( u_\varepsilon \in C^\alpha(B_{1/4}) \) for any \( 0 < \alpha < \min\{2s, 1\} \) and
$$
\|u_\varepsilon\|_{C^\alpha(B_{1/4})} \leqslant C\left(\|u_\varepsilon\|_{L^\infty(\mathbb{R}^n)} + \|f_\varepsilon\|_{L^\infty(B_1)}\right),
$$
for a suitable \( C > 0 \) depending on \( n \), \( s \) and \( \alpha \). Hence, the desired result follows by sending \( \varepsilon \to 0 \).
\end{proof}

	\begin{theorem}[Hölder regularity for $p\in(0,1)$]\label{thm:1.1}
	Let $u \in \mathcal{X}^{1,2}(\mathbb{R}^n)$ be a weak solution of \eqref{equ1} with $p\in(0,1)$. Then $u \in C^\alpha(\mathbb{R}^n)$, for any $\alpha \in (0, \min\{2s, 1\})$, and $$\Vert u\Vert _{C^{\alpha}(\mathbb{R}^n)}\leqslant C,$$ for some $C>0$, depending only on $n,s,h,\alpha$ and $p.$
	\end{theorem}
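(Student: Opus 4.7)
The plan is to first promote $u$ to $L^{\infty}(\mathbb{R}^n)$ by a critical-growth Moser/Brezis--Kato iteration, and then to convert the global Hölder estimate into repeated applications of the local Proposition~\ref{pro:2.4} centered at every point of $\mathbb{R}^n$, exploiting the translation invariance of $\mathcal{L}$.

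\medskip

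\textbf{Step 1 ($L^{\infty}$ bound).} Writing the critical term as $u^{2^{*}-1}=u^{2^{*}-2}\cdot u$ and observing that $(2^{*}-2)\cdot n/2=2^{*}$, the coefficient $u^{2^{*}-2}$ lies in $L^{n/2}(\mathbb{R}^n)$, because $u\in L^{2^{*}}(\mathbb{R}^n)$ by the embedding $\mathcal{X}^{1,2}(\mathbb{R}^n)\hookrightarrow L^{2^{*}}(\mathbb{R}^n)$. Since $p\in(0,1)$, the sublinear contribution satisfies $\lambda h u^{p}\leqslant \lambda\|h\|_{L^{\infty}}(1+u)$. Testing the weak formulation~\eqref{equu:2.5} against $u\, u_{M}^{2(\beta-1)}$ with $u_{M}:=\min\{u,M\}$, $\beta>1$ and $M>0$, both the Dirichlet and the Gagliardo bilinear forms on the left are nonnegative: discarding the Gagliardo contribution and applying the Sobolev inequality to the Dirichlet part yields the standard iterative estimate for $\|u\, u_{M}^{\beta-1}\|_{L^{2^{*}}(\mathbb{R}^n)}^{2}$. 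Combined with absolute continuity of the $L^{2^{*}}$-norm on super-level sets $\{u>k\}$, this bootstraps $u\in L^{q}(\mathbb{R}^n)$ for every $q<\infty$, and a final Moser step produces the $L^{\infty}$-bound, quantitatively controlled by $n,s,p,\lambda,\|h\|_{L^{\infty}}$ and the energy norm of $u$.

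\medskip

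\textbf{Step 2 (localization via Proposition~\ref{pro:2.4}).} Once $u\in L^{\infty}(\mathbb{R}^n)$, the source term $f:=\lambda h u^{p}+u^{2^{*}-1}$ belongs to $L^{\infty}(\mathbb{R}^n)$ with $\|f\|_{L^{\infty}(\mathbb{R}^n)}\leqslant \lambda\|h\|_{L^{\infty}}\|u\|_{L^{\infty}}^{p}+\|u\|_{L^{\infty}}^{2^{*}-1}$. For each $x_{0}\in\mathbb{R}^n$, the translate $v(x):=u(x+x_{0})$ is a weak solution of $\mathcal{L}v=f(\cdot+x_{0})$ in $B_{1}$ lying in $\mathcal{X}^{1,2}(B_{3})\cap L^{\infty}(\mathbb{R}^n)$, since $-\Delta$ and $(-\Delta)^{s}$ are translation invariant and a change of variables in~\eqref{equu:2.5} preserves the weak formulation. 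Proposition~\ref{pro:2.4} then yields
\[
\|u\|_{C^{\alpha}(B_{1/4}(x_{0}))}=\|v\|_{C^{\alpha}(B_{1/4})}\leqslant C\bigl(\|u\|_{L^{\infty}(\mathbb{R}^n)}+\|f\|_{L^{\infty}(\mathbb{R}^n)}\bigr),
\]
with $C=C(n,s,\alpha)$ independent of $x_{0}$. Covering $\mathbb{R}^n$ by these balls and handling the far regime $|x-y|\geqslant 1/4$ through the trivial bound $|u(x)-u(y)|\leqslant 2\cdot 4^{\alpha}\|u\|_{L^{\infty}}|x-y|^{\alpha}$ produces the claimed uniform estimate $\|u\|_{C^{\alpha}(\mathbb{R}^n)}\leqslant C(n,s,h,\alpha,p)$.

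\medskip

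\textbf{Main obstacle.} The only substantive difficulty is Step~1: critical growth places the coefficient $u^{2^{*}-2}$ exactly at the borderline space $L^{n/2}$, so a direct Moser iteration does not close. The standard remedy is the Brezis--Kato splitting $u^{2^{*}-2}=u^{2^{*}-2}\mathds{1}_{\{u\leqslant k\}}+u^{2^{*}-2}\mathds{1}_{\{u>k\}}$, in which the $L^{n/2}$-mass of the second summand is made arbitrarily small for $k$ large by absolute continuity of the $L^{2^{*}}$-norm of $u$, and then absorbed into the left-hand Sobolev term. In the mixed setting no new device is required: the fractional Gagliardo form, evaluated on monotone truncations of $u$, remains nonnegative and can simply be dropped, leaving the iteration structurally identical to the purely local case.
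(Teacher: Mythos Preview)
Your proof is correct and follows essentially the same route as the paper: the paper's own argument is two lines, invoking the $L^{\infty}$-bound from \cite[Theorem~2.2]{Su-Xu} and then Proposition~\ref{pro:2.4}, which is exactly your Step~1 plus Step~2. The only difference is that you supply a self-contained Brezis--Kato sketch for the $L^{\infty}$-estimate where the paper simply cites its companion work; your observation that the Gagliardo form is nonnegative on monotone truncations (and may therefore be discarded) is the correct way to reduce the mixed iteration to the purely local one.
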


\begin{proof}
The H\"older regularity of $u$ follows from Proposition \ref{pro:2.4} and the $L^{\infty}(\mathbb{R}^n)$-regularity in \cite[Theorem 2.2]{Su-Xu}, being $u$ a solution to \eqref{equ1} with $p\in(0,1).$ 
\end{proof}

		
\subsection{The regularity of viscosity solutions for $ p\in[1, 2^*-1)$}\label{sec4}

The goal of this section is to establish the $C^{2,\alpha}$-regularity result for solutions of problem \eqref{equ1} when $1\leqslant p < 2^*-1,$  as stated in Theorem \ref{thm:1.2}. Using techniques of smoothing and truncation \cite{MR4808805}, we can obtain the $C^{2,\alpha}$ regularity of the solution of \eqref{equ1}. However, here we adopt a different approach from \cite{MR4808805} by utilizing the properties of the Fourier transform and the Riesz kernel. Throughout this section $\mathcal{S}$ stands for the Schwartz space of rapidly decreasing $C^{\infty}$ functions in $\mathbb{R}^n$. We define the Riesz potentials by
\begin{equation*}
(I_\alpha f)(x) = \frac{1}{\gamma(\alpha)} \int_{\mathbb{R}^n} |x - y|^{-n+\alpha} f(y)  dy, \quad 0<\alpha<n
\end{equation*}
with
\[
\gamma(\alpha) = \pi^{n/2} 2^\alpha \frac{\Gamma(\alpha/2)}{\Gamma \left( \frac{n}{2} - \frac{\alpha}{2} \right)}.
\]

In Fourier space, we have $\widehat{\mathcal{L}u}(\xi) = (|\xi|^2 + |\xi|^{2s}) \hat{u}(\xi).$ Define
$
m(\xi) = |\xi|^2 + |\xi|^{2s},
$
and
$
M(\xi) = \frac{1}{m(\xi)}.
$
Formally, \begin{equation}\label{sou}u = \mathcal{F}^{-1}[M \hat{f}] = \mathcal{Z} * f,\end{equation} where $\mathcal{Z} = \mathcal{F}^{-1}[M]$.

Take $\phi \in C_c^\infty(\mathbb{R}^n)$ with: $\phi(\xi) = 1$ for $|\xi| \leqslant 1$, $\phi(\xi) = 0$ for $|\xi| \geqslant 2$, $0 \leqslant \phi \leqslant 1$.
Let $\psi(\xi) = 1 - \phi(\xi)$. Define:
\[
M_1(\xi) = \frac{\phi(\xi)}{m(\xi)}, \quad M_2(\xi) = \frac{\psi(\xi)}{m(\xi)}.
\]
Clearly $M = M_1 + M_2$.

Define the kernels:
\[
\mathcal{Z}_1 = \mathcal{F}^{-1}[M_1], \quad \mathcal{Z}_2 = \mathcal{F}^{-1}[M_2].
\]
Then $ \mathcal{Z} = \mathcal{Z}_1 + \mathcal{Z}_2 $ and $u = \mathcal{Z}_1 * f + \mathcal{Z}_2 * f$.

For $|\xi| \leqslant 2$:
\[
M_1(\xi) = \frac{\phi(\xi)}{|\xi|^{2s}(1 + |\xi|^{2-2s})} = \frac{\phi(\xi)}{1 + |\xi|^{2-2s}} \cdot |\xi|^{-2s}:=A(\xi)\cdot|\xi|^{-2s}.
\]
Since $A(\xi) \in C_c^\infty$, we have $z:=\mathcal{F}^{-1}[A(\xi)]$.
Thus,
\[
\mathcal{Z}_1 = z * R_{2s},
\]
where $R_{2s}(x) = c_{n,s} |x|^{-(n-2s)}$ is the kernel of the Riesz potential $I_{2s} = (-\Delta)^{-s}$.

On $\supp \psi$, $m(\xi) \geqslant |\xi|^2$, so $|M_2(\xi)| \leqslant |\psi(\xi)|/|\xi|^2$.  
In fact, $M_2 \in C^\infty$ and all its derivatives decay at least as $|\xi|^{-2}$ at infinity, hence $M_2 \in \mathcal{S}$, and so $\mathcal{Z}_2 \in \mathcal{S}$.

{{To obtain the $C^{2,\alpha}$-regularity of the solution of \eqref{equ1}, we require the following two evident conclusions of Schwartz space, for which we do not provide detailed proofs here.}}
	\begin{lemma}\label{schwa}
		Let $f \in \Sspace(\R^n)$. Then for any multi-index $\beta\in Z_{+}^n$, we have $D^\beta f \in \Lone$. In particular, $f \in \Lone$.
	\end{lemma}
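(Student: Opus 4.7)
The plan is to use two well-known features of the Schwartz class $\Sspace(\R^n)$: stability under differentiation, and the characteristic rapid decay of Schwartz functions together with all their derivatives. Combining these two facts immediately gives pointwise decay estimates strong enough to ensure $L^1$-integrability on $\R^n$.

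First I would recall (or observe directly from the definition) that $\Sspace(\R^n)$ is closed under partial differentiation: if $f\in\Sspace(\R^n)$ then for every multi-index $\beta\in\mathbb{Z}_+^n$ we have $D^\beta f\in \Sspace(\R^n)$ as well. Hence it suffices to prove that an arbitrary Schwartz function $g$ (namely $g:=D^\beta f$) lies in $\Lone$; the second assertion of the lemma is just the case $\beta=0$.

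Second I would invoke the defining seminorm bounds of the Schwartz class: for any $N\in\N$ there exists a constant $C_N>0$ such that
\[
|g(x)|\leqslant \frac{C_N}{(1+|x|)^{N}}\qquad \text{for all } x\in\R^n.
\]
This follows, for instance, by combining the bounds $\sup_x |x^\alpha g(x)|<+\infty$ over all multi-indices $\alpha$ with $|\alpha|\leqslant N$. Picking $N>n$, the function $(1+|x|)^{-N}$ is integrable on $\R^n$ (by a standard radial computation, since $\int_1^\infty r^{n-1-N}\,dr<+\infty$), and therefore
\[
\int_{\R^n}|D^\beta f(x)|\,dx\leqslant C_N\int_{\R^n}\frac{dx}{(1+|x|)^{N}}<+\infty,
\]
which is exactly $D^\beta f\in\Lone$. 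This completes the proof.

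There is essentially no obstacle here: the statement is a direct consequence of the very definition of the Schwartz space, and the whole argument collapses to the two ingredients above (closure under differentiation and rapid decay faster than any polynomial). The only small point to be careful about is choosing $N$ strictly greater than $n$ so that the comparison integral converges; every other step is automatic.
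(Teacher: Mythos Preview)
Your proof is correct and entirely standard. The paper itself does not prove this lemma, explicitly noting that it is an ``evident conclusion of Schwartz space, for which we do not provide detailed proofs here,'' so your argument supplies exactly the routine verification the authors omit.
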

	
%
%
%
%
	\begin{lemma}\label{con}
		Let $Z \in \Sspace(\R^n)$ and $f \in \Linfty$. Then for any $k \in \N$ and $\theta \in (0,1]$, the convolution $u = Z * f$ satisfies $u \in \Cktheta$, and there exists a constant $C > 0$ (depending on $Z$, $k$, $\theta$) such that
		\[
		\norm{u}_{C^{k,\theta}} \leqslant C \norm{f}_{\Linfty}.
		\]
	\end{lemma}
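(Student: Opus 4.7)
The plan is to express each derivative of the convolution as a convolution with a derivative, $D^\beta u = (D^\beta Z) * f$, and to control all of the relevant norms using the $L^1$ integrability of Schwartz derivatives supplied by Lemma~\ref{schwa}. First I would fix a multi-index $\beta$ with $|\beta| \leqslant k+1$. Since $Z \in \Sspace(\R^n)$, Lemma~\ref{schwa} gives $D^\beta Z \in \Lone$. The pointwise bound $|\partial_x^\beta[Z(x-z)\,f(z)]| \leqslant \|f\|_{\Linfty}\, |D^\beta Z(x-z)|$ supplies an integrable $z$-majorant, so dominated convergence justifies differentiation under the integral sign and yields
\[
D^\beta u(x) \,=\, \int_{\R^n} D^\beta Z(x-z)\, f(z)\, dz, \qquad \|D^\beta u\|_{\Linfty} \,\leqslant\, \|D^\beta Z\|_{\Lone}\, \|f\|_{\Linfty}.
\]

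Next I would estimate the Hölder seminorm of $D^\beta u$ for $|\beta| = k$. Writing
\[
D^\beta u(x) - D^\beta u(y) \,=\, \int_{\R^n} \bigl(D^\beta Z(x-z) - D^\beta Z(y-z)\bigr)\, f(z)\, dz,
\]
and applying the fundamental theorem of calculus to the smooth function $D^\beta Z$, I would get
\[
D^\beta Z(x-z) - D^\beta Z(y-z) \,=\, \int_0^1 \nabla D^\beta Z\bigl(y - z + t(x-y)\bigr) \cdot (x-y)\, dt.
\]
Taking absolute values, Fubini, and translation invariance of Lebesgue measure then produce the Lipschitz estimate
\[
|D^\beta u(x) - D^\beta u(y)| \,\leqslant\, |x-y|\, \|\nabla D^\beta Z\|_{\Lone}\, \|f\|_{\Linfty}.
\]

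To pass from this Lipschitz bound to a genuine $\theta$-Hölder bound that is uniform on all of $\R^n$, I would split into two scales. When $|x-y|\leqslant 1$ the Lipschitz estimate already dominates $|x-y|^\theta$, giving $|D^\beta u(x)-D^\beta u(y)|\leqslant C\|f\|_{\Linfty}|x-y|^\theta$. When $|x-y|>1$ I would instead use the sup bound, obtaining
\[
|D^\beta u(x) - D^\beta u(y)| \,\leqslant\, 2\|D^\beta u\|_{\Linfty} \,\leqslant\, 2\|D^\beta Z\|_{\Lone}\|f\|_{\Linfty}\,|x-y|^\theta.
\]
Summing these estimates over all $|\beta|\leqslant k$ and combining with the sup bounds from the first step yields $\norm{u}_{\Cktheta}\leqslant C\|f\|_{\Linfty}$, with $C$ depending only on $Z$, $k$, and $\theta$. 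The only subtle point, and the one demanding care, is precisely this two-scale argument: no single pointwise estimate can simultaneously capture the small-scale differentiability of $D^\beta u$ coming from the mean value theorem and the global boundedness inherited from $\|D^\beta Z\|_{\Lone}$, so the Hölder seminorm has to be assembled from these two regimes separately.
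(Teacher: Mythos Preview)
Your proposal is correct and follows essentially the same approach as the paper: differentiate under the integral via $D^\beta u = (D^\beta Z)*f$, control sup norms by $\|D^\beta Z\|_{L^1}\|f\|_{L^\infty}$, obtain a Lipschitz bound on $D^\beta u$ from the mean value theorem and $\|\nabla D^\beta Z\|_{L^1}$, and then interpolate between Lipschitz and $L^\infty$ to reach $C^{0,\theta}$. The only cosmetic difference is that the paper packages the last step as the interpolation inequality $|g(x)-g(y)|\leqslant 2\|g\|_{L^\infty}^{1-\theta}\bigl(\|f\|_{L^\infty}\|\nabla D^\beta Z\|_{L^1}\bigr)^{\theta}|x-y|^{\theta}$, whereas you split explicitly at $|x-y|=1$; the two are equivalent.
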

	\begin{proof}
		The proof proceeds in two steps: first we prove $u \in C^k$ and control the $C^k$-norm, then we prove that the $k$-th order derivatives are $\theta$-H\"older continuous.
		
		\noindent \textit{Step 1: Proof that $u \in C^k$ and $C^k$-norm estimate}
		
		 Since $Z \in \Sspace$, classical convolution theory allows us to interchange the differentiation operator with convolution:
			\[
			D^\beta u(x) = D^\beta (Z * f)(x) = (D^\beta Z) * f (x), \quad \forall \, |\beta| \leqslant k.
			\]
			 For any $|\beta| \leqslant k$, apply Young's convolution inequality. Since $D^\beta Z \in \Lone$ (Lemma \ref{schwa}) and $f \in \Linfty$, we have:
			\[
			\norm{D^\beta u}_{\Linfty} = \norm{(D^\beta Z) * f}_{\Linfty} \leqslant \norm{D^\beta Z}_{\Lone} \norm{f}_{\Linfty}.
			\]
			Define the constant
			\[
			C_k^{(1)} = \max_{|\beta| \leqslant k} \norm{D^\beta Z}_{\Lone}.
			\]
			By Lemma \ref{schwa}, $C_k^{(1)} < \infty$. Therefore,
			\[
			\norm{u}_{C^k} = \max_{|\beta| \leqslant k} \norm{D^\beta u}_{\Linfty} \leqslant C_k^{(1)} \norm{f}_{\Linfty}.
			\]
			This proves $u \in C^k(\R^n)$.

		\noindent \textit{Step 2: Proof of $\theta$-H\"older Continuity of the $k$-th Order Derivatives}

		 Let $|\beta| = k$, and denote $g = D^\beta u = (D^\beta Z) * f$. We need to prove that $g \in C^{0,\theta}$, i.e., there exists a constant $C_{Z, \beta, \theta}$ such that
			\[
			\abs{g(x) - g(y)} \leqslant C_{Z,\beta, \theta} \norm{f}_{\Linfty} \abs{x - y}^\theta, \quad \forall x, y \in \R^n.
			\]
			From the definition of $g$, we have 
			\[
			\begin{aligned}
				\abs{g(x) - g(y)} &= \left| \int_{\R^n} [D^\beta Z(x - z) - D^\beta Z(y - z)] f(z)  dz \right| \\
				&\leqslant \norm{f}_{\Linfty} \int_{\R^n} \abs{D^\beta Z(x - z) - D^\beta Z(y - z)}  dz.
			\end{aligned}
			\]
			Let $h = x - y$, and make the change of variables $w = z - y$ to obtain
%
%
    \begin{align*}
        \int_{\mathbb{R}^n} |D^\beta Z(h - w) - D^\beta Z(-w)|  dw &\leqslant |h| \int_0^1 \|\nabla(D^\beta Z)\|_{L^1(\mathbb{R}^n)}  dt \\
        &= |h| \cdot \|\nabla(D^\beta Z)\|_{L^1(\mathbb{R}^n)}.
    \end{align*}
Therefore
    \[
    |g(x) - g(y)| \leqslant \|f\|_{L^\infty(\mathbb{R}^n)} \cdot \|\nabla(D^\beta Z)\|_{L^1(\mathbb{R}^n)} \cdot |x - y|.
    \]
    This shows that $g$ is Lipschitz continuous.

    From \textit{Step 1}, we know $g \in L^\infty(\mathbb{R}^n)$. Combining this with the above Lipschitz estimate, we can use interpolation to obtain a H\"older estimate of $g$ for any $\theta \in (0,1]$:
    \[
    |g(x) - g(y)| \leqslant 2 \|g\|_{L^\infty(\mathbb{R}^n)}^{1-\theta} \cdot \left( \|f\|_{L^\infty(\mathbb{R}^n)} \|\nabla(D^\beta Z)\|_{L^1(\mathbb{R}^n)} \right)^\theta |x - y|^\theta.
    \]
    Recalling the $C^k$-norm estimate $\|g\|_{L^\infty} \leqslant C_k^{(1)} \|f\|_{L^\infty}$ from Step 1, we finally conclude:
    \[
    [g]_{C^{0,\theta}} \leqslant C \|f\|_{L^\infty(\mathbb{R}^n)},
    \]
    where $C$ is a positive constant depending on $Z, \beta$ and $\theta$.
	\end{proof}
	
	Employing the above two lemmas, we get the following regularity result of weak solutions of \eqref{equ1}:
\begin{proposition}\label{pro 4.1}
Let $0 < s < 1$.  
If $u \in L^\infty(\mathbb{R}^n)$ and $\mathcal{L}u = f \in L^\infty(\mathbb{R}^n)$, then:

\begin{enumerate}
    \item If $2s \leqslant 1$, then $u \in C^{0,\alpha}(\mathbb{R}^n)$ for all $\alpha < 2s$, and
    \[
    \|u\|_{C^{0,\alpha}} \leqslant C (\|f\|_{L^\infty}+\|u\|_{L^\infty})
    \]
    for a positive constant $C$ depending only on $n,\alpha, s.$
    \item If $2s > 1$, then $u \in C^{1,\alpha}(\mathbb{R}^n)$ for all $\alpha < 2s - 1$, and
    \[
    \|u\|_{C^{1,\alpha}} \leqslant C (\|f\|_{L^\infty}+\|u\|_{L^\infty})
    \]
    for a positive constant $C$ depending only on $n,\alpha, s.$
\end{enumerate}
\end{proposition}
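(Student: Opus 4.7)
\emph{Plan.} We use the Fourier-space decomposition $\mathcal{Z}=\mathcal{Z}_{1}+\mathcal{Z}_{2}$ constructed above and write, in a suitable distributional sense,
\[ u \;=\; \mathcal{Z}*f \;=\; \mathcal{Z}_{1}*f+\mathcal{Z}_{2}*f \;=:\; u_{1}+u_{2}. \]
The piece $u_{2}$ carries the high-frequency part of the multiplier $M$ and will be treated as a harmless smooth remainder; the piece $u_{1}$ contains the low-frequency singularity $|\xi|^{-2s}$ of $M$ at the origin and is where the sharp Hölder regularity predicted by the proposition is produced.

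\emph{Treatment of $u_{2}$.} By construction $M_{2}=\psi/m$ vanishes for $|\xi|\leqslant 1$ and is smooth with all derivatives decaying at infinity at least like $|\xi|^{-2}$, so that $\mathcal{Z}_{2}$ is treated as a Schwartz function. Applying Lemma \ref{con} with $Z=\mathcal{Z}_{2}$, we obtain for every $k\in\mathbb{N}$ and $\theta\in(0,1]$
\[ \|u_{2}\|_{C^{k,\theta}(\mathbb{R}^{n})} \;\leqslant\; C\,\|f\|_{L^{\infty}(\mathbb{R}^{n})}, \]
which is already far stronger than the regularity asserted in either case of the statement.

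\emph{Treatment of $u_{1}$.} Using the factorization $\mathcal{Z}_{1}=z*R_{2s}$ and the associativity of convolution, we rewrite
\[ u_{1} \;=\; R_{2s}*(z*f) \;=\; c_{n,s}\,I_{2s}(z*f). \]
By Lemma \ref{con} applied to the kernel $z=\mathcal{F}^{-1}[A]$ (which is smooth and rapidly decaying because $A$ is compactly supported), the function $g:=z*f$ is smooth and bounded, with $\|g\|_{C^{k,\theta}}\leqslant C\,\|f\|_{L^{\infty}}$ for every $k$ and $\theta$. The conclusion then follows from the classical fact that the Riesz potential $I_{2s}$ gains exactly $2s$ orders of Hölder smoothness on functions of this type: for $2s\leqslant 1$ it maps $g$ into $C^{0,\alpha}(\mathbb{R}^{n})$ for every $\alpha<2s$, while for $2s>1$ it maps $g$ into $C^{1,\alpha}(\mathbb{R}^{n})$ for every $\alpha<2s-1$, in both cases with norms controlled by $\|g\|_{L^{\infty}}\leqslant C\|f\|_{L^{\infty}}$. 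Adding the estimates for $u_{1}$ and $u_{2}$ produces the bound stated in the proposition.

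\emph{Main obstacle.} The delicate point is that $I_{2s}$ is not bounded on $L^{\infty}(\mathbb{R}^{n})$, as the Riesz kernel $R_{2s}(x)=c_{n,s}|x|^{-(n-2s)}$ is not integrable at infinity. Hence $I_{2s}(z*f)$ must be understood either distributionally or modulo a polynomial correction of degree $\leqslant\lfloor 2s\rfloor$. This polynomial ambiguity is precisely what is fixed by the hypothesis $u\in L^{\infty}(\mathbb{R}^{n})$: the ambiguity is removed by comparing $u_{1}=u-u_{2}$ with $I_{2s}(z*f)$, and its normalization cost is absorbed in the term $\|u\|_{L^{\infty}}$ on the right-hand side. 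Turning this polynomial cancellation into a uniform Hölder bound expressed in terms of $\|u\|_{L^{\infty}}+\|f\|_{L^{\infty}}$ is the only technical burden remaining in the proof.
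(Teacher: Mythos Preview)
Your proposal is correct and follows essentially the same route as the paper: the same Fourier decomposition $\mathcal{Z}=\mathcal{Z}_1+\mathcal{Z}_2$, Lemma~\ref{con} for the Schwartz piece $\mathcal{Z}_2$, and the Riesz-potential factorization $\mathcal{Z}_1=z*R_{2s}$ for the singular piece. The only cosmetic differences are that the paper writes $\mathcal{Z}_1*f=z*(R_{2s}*f)$ rather than your $R_{2s}*(z*f)$, and that the paper proves the Hölder seminorm bound $[I_{\alpha}f]_{C^{0,\alpha}}\leqslant C\|f\|_{L^\infty}$ (and its $C^{1,\alpha-1}$ analogue for $1<\alpha<2$) explicitly by the standard near/far splitting of the kernel difference, whereas you cite it as classical; your discussion of the polynomial ambiguity is precisely what underlies the paper's otherwise unexplained appearance of $\|u\|_{L^\infty}$ in the bound for $\|\mathcal{Z}_1*f\|_{C^{0,\alpha}}$.
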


\begin{proof}
Since $\mathcal{Z}_2 \in \mathcal{S}$, for any $k \in \mathbb{N}$, $\theta \in (0,1]$, by Lemma~\ref{con}, we have
\[
\|\mathcal{Z}_2 * f\|_{C^{ k,\theta}} \leqslant C_{k,\theta} \|f\|_{L^\infty}.
\]

For $\mathcal{Z}_1$, we have $\mathcal{Z}_1 = z* R_{2s}$.
By Riesz potential theory (\cite[Chapter V]{MR290095}) and \cite[Proposition 2.9]{MR2270163}, 
we first claim that :
If \( 0 < \alpha < 1 \) and \( f \in L^\infty(\mathbb{R}^n) \), then
$
I_\alpha f(x) $
satisfies
$
\abs{I_\alpha f(x) - I_\alpha f(y)} \leqslant C \norm{f}_{L^\infty} \abs{x-y}^\alpha,
$
that is, \( I_\alpha f \in C^{0,\alpha}(\mathbb{R}^n) \) .

For \( x, y \in \mathbb{R}^n \), let \(\delta = \abs{x - y}\) and write
\begin{align*}
&\quad I_\alpha f(x) - I_\alpha f(y)\\ &= c_{n,\alpha} \int_{\mathbb{R}^n} \left[ \frac{1}{\abs{x - z}^{n-\alpha}} - \frac{1}{\abs{y - z}^{n-\alpha}} \right] f(z)  dz\\
&=\int_{B_{2\delta}(x)} \left[ \frac{1}{\abs{x - z}^{n-\alpha}} - \frac{1}{\abs{y - z}^{n-\alpha}} \right] f(z)  dz+\int_{\mathbb{R}^n \setminus B_{2\delta}(x)} \left[ \frac{1}{\abs{x - z}^{n-\alpha}} - \frac{1}{\abs{y - z}^{n-\alpha}} \right] f(z)  dz.
\end{align*}

For \( z \in B_{2\delta}(x) \), we have

\[
\left| \int_{B_{2\delta}(x)} \frac{f(z)}{\abs{x-z}^{n-\alpha}}  dz \right| \leqslant\norm{f}_{L^\infty} \int_{B_{2\delta}} \frac{1}{\abs{z}^{n-\alpha}}  dz = C \norm{f}_{L^\infty} \delta^\alpha.
\]
The same estimate holds for \( y \).

For \( z \in \mathbb{R}^n \setminus B_{2\delta}(x) \), by the mean value theorem:
\[
\left| \frac{1}{\abs{x-z}^{n-\alpha}} - \frac{1}{\abs{y-z}^{n-\alpha}} \right| \leqslant C \frac{\abs{x-y}}{\abs{\xi-z}^{n-\alpha+1}},
\]
where \(\xi\) lies on the line segment joining \( x \) and \( y \).

Since \(\abs{x-z} \geqslant 2\delta\) and \(\abs{x-y} = \delta\), we have \(\abs{\xi-z} \geqslant \abs{x-z} - \delta \geqslant \frac{1}{2}\abs{x-z}\).
Therefore,
\[
\left| \frac{1}{\abs{x-z}^{n-\alpha}} - \frac{1}{\abs{y-z}^{n-\alpha}} \right| \leqslant C \frac{\delta}{\abs{x-z}^{n-\alpha+1}}.
\]
Integrating:
\[
\int_{\mathbb{R}^n \setminus B_{2\delta}(x)} \frac{\delta}{\abs{x-z}^{n-\alpha+1}}  dz = C \delta \int_{2\delta}^{\infty} \frac{r^{n-1}}{r^{n-\alpha+1}}  dr = C \delta \int_{2\delta}^{\infty} r^{\alpha-2}  dr = C \delta^\alpha.
\]
So,
\[
\abs{I_\alpha f(x) - I_\alpha f(y)} \leqslant C\norm{f}_{L^\infty} \abs{x - y}^\alpha.
\]
This completes the proof of the Claim.

Next, we have the other fact that :
If \( 1 < \alpha < 2 \) and \( f \in L^\infty(\mathbb{R}^n) \), then \( I_\alpha f \in C^{1,\alpha-1}(\mathbb{R}^n) \), and
\[
\norm{ \triangledown( I_\alpha f) }_{C^{0,\alpha-1}} \leqslant C \norm{f}_{L^\infty}.
\]

We first compute the gradient of \( I_\alpha f \). For \( 1 < \alpha < 2 \), we have,
\[
\triangledown( I_\alpha f)(x)  = -c_{n,\alpha}(n-\alpha) \int_{\mathbb{R}^n} \frac{x-z}{\abs{x-z}^{n-\alpha+2}} f(z)  dz.
\]
We need to show that \( \triangledown( I_\alpha f) \in C^{0,\alpha-1}(\mathbb{R}^n) \). Take \( x, y \in \mathbb{R}^n \) and let \( \delta = \abs{x-y} \). Consider
\[
\triangledown( I_\alpha f)(x) - \triangledown( I_\alpha f)(y) = -c_{n,\alpha}(n-\alpha) \int_{\mathbb{R}^n} \left[ \frac{x-z}{\abs{x-z}^{n-\alpha+2}} - \frac{y-z}{\abs{y-z}^{n-\alpha+2}} \right] f(z)  dz.
\]

As in the above computation, we have

\[
\left| \int_{B_{2\delta}(x)} \frac{x-z}{\abs{x-z}^{n-\alpha+2}} f(z)  dz \right| \leqslant\norm{f}_{L^\infty} \int_{B_{2\delta}} \frac{1}{\abs{z}^{n-\alpha+1}}  dz = C \norm{f}_{L^\infty} \delta^{\alpha-1}.
\]
The same bound holds for the above term with \( y \).

For \( z \in \mathbb{R}^n \setminus B_{2\delta}(x) \),
define the kernel
\[
K(x,z) = \frac{x-z}{\abs{x-z}^{n-\alpha+2}}.
\]
We estimate the difference \( |K(x,z) - K(y,z)| \). By the mean value theorem, we have
\[
|K(x,z) - K(y,z)| \leqslant\abs{x-y} \sup_{\xi \in [x,y]} \abs{D_\xi K(\xi,z)},
\]
where \( D_\xi K \) denotes the derivative matrix.

Compute the derivative:
\[
D_{x_j} K_i(x,z) = D_{x_j} \left( \frac{x_i - z_i}{\abs{x-z}^{n-\alpha+2}} \right) = \frac{\delta_{ij}}{\abs{x-z}^{n-\alpha+2}} - (n-\alpha+2) \frac{(x_i-z_i)(x_j-z_j)}{\abs{x-z}^{n-\alpha+4}}.
\]
Thus,
\[
\abs{D_x K(x,z)} \leqslant\frac{C}{\abs{x-z}^{n-\alpha+2}}.
\]
Since \( \abs{x-z} \geqslant 2\delta \) and \( \abs{\xi-z} \geqslant \frac{1}{2}\abs{x-z} \) for \( \xi \in [x,y] \), we have:
\[
|K(x,z) - K(y,z)| \leqslant C \frac{\delta}{\abs{x-z}^{n-\alpha+2}}.
\]
Therefore,
\[
\int_{\mathbb{R}^n \setminus B_{2\delta}(x)} \frac{\delta}{\abs{x-z}^{n-\alpha+2}}  dz = C \delta \int_{2\delta}^{\infty} \frac{r^{n-1}}{r^{n-\alpha+2}}  dr = C \delta \int_{2\delta}^{\infty} r^{\alpha-3}  dr = C \delta^{\alpha-1}.
\]

Combining both regions, we obtain:
\[
\abs{\triangledown( I_\alpha f)(x) - \triangledown( I_\alpha f)(y)} \leqslant C \norm{f}_{L^\infty} \abs{x-y}^{\alpha-1}.
\]
This shows that \( \triangledown( I_\alpha f) \in C^{0,\alpha-1}(\mathbb{R}^n) \).

When $\alpha=1,$ we derive the result by a mollifier technique and a cutoff argument.

Therefore, based on the above two conclusions, we have

(1) If $2s \leqslant 1,$
we have
\[
\|R_{2s} * f\|_{C^{0,\alpha}} \leqslant C (\|f\|_{L^\infty}+\|u\|_{L^\infty}).
\]
for $\alpha<2s.$ Since $\mathcal{Z}_1 * f = z * (R_{2s} * f)$ and $z\in L^1(\mathbb{R}^n),$ convolution preserves H\"older continuity:
\[
\|\mathcal{Z}_1 * f\|_{C^{0,\alpha}} \leqslant C (\|f\|_{L^\infty}+\|u\|_{L^\infty}),
\]

(2) If $2s > 1$, we have
\[
\|R_{2s} * f\|_{C^{1,\alpha}} \leqslant C (\|f\|_{L^\infty}+\|u\|_{L^\infty}),
\]
for $\alpha<2s-1.$ Since $D^\beta (z * (R_{2s} * f)) = (D^\beta z) * (R_{2s} * f)$ and $D^\beta z \in L^1$, we get
\[
\|\mathcal{Z}_1 * f\|_{C^{1,\alpha}} \leqslant C (\|f\|_{L^\infty}+\|u\|_{L^\infty}).
\]

Combining the above estimates, we get
\[
u = \mathcal{Z}_1 * f + \mathcal{Z}_2 * f.
\]

\begin{itemize}
    \item If $2s \leqslant 1$: $\mathcal{Z}_1 * f \in C^{0,\alpha}$ ($\alpha < 2s$), $\mathcal{Z}_2 * f \in C^\infty$, so $u \in C^{0,\alpha}$, and
    \[
    \|u\|_{C^{0,\alpha}} \leqslant C (\|f\|_{L^\infty}+\|u\|_{L^\infty}).
    \]
    
    \item If $2s > 1$: $\mathcal{Z}_1 * f \in C^{1,\alpha}$ ($\alpha < 2s - 1$), $\mathcal{Z}_2 * f \in C^\infty$, so $u \in C^{1,\alpha}$, and
    \[
    \|u\|_{C^{1,\alpha}} \leqslant C (\|f\|_{L^\infty}+\|u\|_{L^\infty}).\qedhere
    \]
\end{itemize}
\end{proof}

\begin{proposition}
\label{pro 4.2}
Let  $0 < s < 1$.  
If $u \in L^\infty(\mathbb{R}^n)$ and $\mathcal{L}u = f \in C^{0,\alpha}(\mathbb{R}^n)$ with $0 < \alpha \leqslant 1$, then:

\begin{enumerate}
    \item If $\alpha + 2s \leqslant 1$, then $u \in C^{0,\alpha+2s}(\mathbb{R}^n)$, and
    \[
    \|u\|_{C^{0,\alpha+2s}} \leqslant C(\|u\|_{L^\infty} + \|f\|_{C^{0,\alpha}}).
    \]
    
    \item If $\alpha + 2s > 1$, then $u \in C^{1,\alpha+2s-1}(\mathbb{R}^n)$, and
    \[
    \|u\|_{C^{1,\alpha+2s-1}} \leqslant C(\|u\|_{L^\infty} + \|f\|_{C^{0,\alpha}}).
    \]
\end{enumerate}
\end{proposition}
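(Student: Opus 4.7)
The plan is to mirror the proof of Proposition~\ref{pro 4.1} almost verbatim, with one crucial change: wherever the previous proof used only $\|f\|_{L^\infty}$, we now also exploit the H\"older continuity $[f]_{C^{0,\alpha}}$ to squeeze an extra $\alpha$ derivatives out of the Riesz potential. As before, I would use the Fourier multiplier decomposition $M = M_1 + M_2$ introduced before Lemma~\ref{schwa}, writing $u = \mathcal{Z}_1*f + \mathcal{Z}_2*f$ with $\mathcal{Z}_1 = z*R_{2s}$ ($z\in\mathcal{S}$) and $\mathcal{Z}_2\in\mathcal{S}$. The tail $\mathcal{Z}_2*f$ is harmless: Lemma~\ref{con} gives $\|\mathcal{Z}_2*f\|_{C^{k,\theta}}\leqslant C\|f\|_{L^\infty}\leqslant C\|f\|_{C^{0,\alpha}}$ for any $k,\theta$, so this term is automatically smoother than what we need. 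Everything reduces to controlling $\mathcal{Z}_1*f = z*(R_{2s}*f)$.

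The key technical claim I would establish is that the Riesz potential $I_{2s}=R_{2s}*\cdot$ gains exactly $2s$ H\"older derivatives on the $C^{0,\alpha}$ scale, i.e.:
\begin{itemize}
\item[\textbf{(a)}] if $\alpha+2s\leqslant 1$, then $[R_{2s}*f]_{C^{0,\alpha+2s}} \leqslant C(\|f\|_{L^\infty}+[f]_{C^{0,\alpha}})$;
\item[\textbf{(b)}] if $\alpha+2s>1$, then $\|\nabla(R_{2s}*f)\|_{C^{0,\alpha+2s-1}}\leqslant C(\|f\|_{L^\infty}+[f]_{C^{0,\alpha}})$.
\end{itemize}
Given these, convolving with $z\in\mathcal{S}\subset L^1\cap W^{1,1}$ and invoking Lemma~\ref{con}--style estimates transfers the bound to $\mathcal{Z}_1*f$, and combining with the Schwartz tail yields the stated $C^{0,\alpha+2s}$ or $C^{1,\alpha+2s-1}$ estimate on $u$.

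To prove (a), I would adopt the same near/far splitting used in Proposition~\ref{pro 4.1}: set $\delta = |x-y|$ and decompose the difference $R_{2s}*f(x)-R_{2s}*f(y)$ into contributions from $B_{2\delta}(x)$, $B_{2\delta}(y)$, and the complement. The new ingredient is that inside the near ball I would subtract $f(x)$ from $f(z)$ (and symmetrically $f(y)$ on the $y$--side) and bound $|f(z)-f(x)|\leqslant [f]_{C^{0,\alpha}}|x-z|^{\alpha}$; this replaces the integral $\int_{B_{2\delta}(x)} |x-z|^{-(n-2s)}\,dz=C\delta^{2s}$ by $\int_{B_{2\delta}(x)} |x-z|^{\alpha-(n-2s)}\,dz=C\delta^{\alpha+2s}$, which is precisely the desired gain. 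The leftover constants $f(x)\int_{B_{2\delta}(x)}|x-z|^{-(n-2s)}\,dz$ and the corresponding $y$-integral are equal by translation invariance and cancel. On the far region $\mathbb{R}^n\setminus(B_{2\delta}(x)\cup B_{2\delta}(y))$ the mean value theorem gives $||x-z|^{-(n-2s)}-|y-z|^{-(n-2s)}|\leqslant C\delta|x-z|^{-(n-2s+1)}$; applying the same $f(z)-f(x)$ subtraction and integrating $|x-z|^{\alpha-(n-2s+1)}$ from $2\delta$ to $\infty$ yields $C\delta^{\alpha+2s}$ again (the integral converges precisely because $\alpha+2s\leqslant 1$).

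For (b), the main obstacle, I would differentiate first to get $\nabla(R_{2s}*f)(x) = -c_{n,s}\int \frac{x-z}{|x-z|^{n-2s+2}}f(z)\,dz$ and repeat the splitting. The near part now carries a borderline singularity $|x-z|^{-(n-2s+1)}$, and the $f(x)$-subtraction together with the odd-symmetry identity $\int_{B_r}\frac{x-z}{|x-z|^{n-2s+2}}\,dz = 0$ is what both renders the principal value well-defined and furnishes the extra $\alpha$ factor: the near integral is bounded by $C[f]_{C^{0,\alpha}}\int_{B_{2\delta}} |x-z|^{\alpha-(n-2s+1)}\,dz = C[f]_{C^{0,\alpha}}\delta^{\alpha+2s-1}$. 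The far region is treated by applying the mean value theorem to the vector kernel, as in the Case $1<\alpha<2$ estimate of Proposition~\ref{pro 4.1}, yielding the same $\delta^{\alpha+2s-1}$ rate. The main delicacy is controlling the exchange of derivative and integral in the principal-value sense; this is handled by the standard mollification argument as at the end of the proof of Proposition~\ref{pro 4.1}, approximating $f$ by smooth functions, deriving the estimate a priori, and passing to the limit.
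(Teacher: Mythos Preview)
Your proposal is correct and mirrors the paper's own argument: both use the decomposition $u=\mathcal{Z}_1*f+\mathcal{Z}_2*f$ with $\mathcal{Z}_1=z*R_{2s}$, dispose of $\mathcal{Z}_2*f$ via Lemma~\ref{con}, and reduce everything to the Riesz-potential mapping $I_{2s}:C^{0,\alpha}\to C^{\alpha+2s}$ (resp.\ $C^{1,\alpha+2s-1}$). The only difference is cosmetic---the paper simply cites this mapping property from \cite[Lemma~2 and Theorem~4]{MR290095} and \cite[Proposition~2.7]{MR2270163}, whereas you outline its direct proof via the near/far splitting, which is essentially what those references do.
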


\begin{proof}
By \cite[Lemma 2 and Theorem 4]{MR290095} and \cite[Proposition 2.7]{MR2270163}, we have the following result:
Let \( 0 < \alpha < 1 \), \( 0 < \beta < 2 \). If \( f \in C^{0,\alpha}(\mathbb{R}^n) \), then 
\[ \text {if } \alpha + \beta < 1,
I_{\beta}f \in C^{0,\alpha+\beta}(\mathbb{R}^n) 
\]
and
\[ \text{if } 1 < \alpha + \beta < 2,
I_{\beta}f \in C^{1,\alpha+\beta-1}(\mathbb{R}^n) .
\]
Whence, analogue to  the arguments in the proof of Proposition \ref{pro 4.1}, the validity of Proposition~\ref{pro 4.2} is established.
\end{proof}

\begin{lemma}[$C^{1,\theta}$- regularity]\label{regu}Let $u \in \mathcal{X}^{1,2}(\mathbb{R}^n)$ be a nontrivial solution of \eqref{equ1} with $p\in[1,2^*-1)$. Then $u \in C^{1,\theta}(\mathbb{R}^n)$, for any $\theta \in (0, 1)$ and $$\Vert u\Vert _{C^{1,\theta}(\mathbb{R}^n)}\leqslant C(n,s,\theta),$$ for some $C=C(n,s,\theta).$ 
	\end{lemma}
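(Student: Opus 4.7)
The plan is a bootstrap built on the kernel-based Schauder estimates in Propositions \ref{pro 4.1} and \ref{pro 4.2}, capped off by a classical Laplacian-type argument. First, since $u \in L^\infty(\mathbb{R}^n)$ by \cite[Theorem 2.2]{Su-Xu}, the right-hand side $f := \lambda h u^p + u^{2^*-1}$ lies in $L^\infty(\mathbb{R}^n)$, and Proposition \ref{pro 4.1} yields an initial Hölder exponent: $u \in C^{0,\alpha_0}(\mathbb{R}^n)$ with $\alpha_0 < 2s$ when $2s \leqslant 1$, or already $u \in C^{1,\alpha_0}(\mathbb{R}^n)$ with $\alpha_0 < 2s-1$ when $2s > 1$.

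Next I would set up a composition estimate: since $u \geqslant 0$ is bounded and, for $p \geqslant 1$, the maps $t \mapsto t^p$ and $t \mapsto t^{2^*-1}$ are locally Lipschitz on $[0, \|u\|_{L^\infty}]$, while $h \in C^1(\mathbb{R}^n) \cap L^\infty(\mathbb{R}^n)$ is Lipschitz, any Hölder regularity of $u$ is inherited by $f$ with a controlled seminorm. Iterating Proposition \ref{pro 4.2} then gains $2s$ at each round, passing from $C^{0,\cdot}$ to $C^{1,\cdot}$ once the cumulative exponent exceeds $1$. After finitely many rounds, in particular, one obtains $u \in C^{0,\alpha}(\mathbb{R}^n)$ for any $\alpha \in (0,1)$; fix such an $\alpha$ with $\alpha > 2s$.

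The remaining step bridges from fractional to classical regularity. For $u \in C^{0,\alpha}$ with $\alpha > 2s$, the bound $|u(x)-u(y)| \leqslant [u]_{C^{0,\alpha}} |x-y|^\alpha$ makes the pointwise singular integral defining $(-\Delta)^s u(x)$ absolutely convergent, hence $(-\Delta)^s u \in L^\infty(\mathbb{R}^n)$. Rewriting the equation as $-\Delta u = f - (-\Delta)^s u \in L^\infty(\mathbb{R}^n)$ and applying classical $W^{2,q}$- and Schauder-type estimates for the Laplacian (followed by Sobolev embedding with $q$ arbitrarily large) gives $u \in C^{1,\theta}(\mathbb{R}^n)$ for every $\theta \in (0,1)$, together with the announced quantitative bound in terms of $n$, $s$, $\theta$ and the fixed quantities $\|u\|_{L^\infty}$, $\|h\|_{C^1}$, $\lambda$.

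The main obstacle is precisely this last bridging step: the kernel estimates in Propositions \ref{pro 4.1} and \ref{pro 4.2} only convert regularity at rate $2s$, so when $s < 1/2$ they stall at Hölder exponent $2s$ in the $C^{1,\cdot}$-scale and cannot reach arbitrary $\theta < 1$ on their own. One must first accumulate enough fractional Hölder regularity (crossing the threshold $\alpha > 2s$) to make $(-\Delta)^s u$ pointwise bounded, and only then can the second-order gain of $-\Delta$ be exploited to close the argument.
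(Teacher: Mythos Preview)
Your bootstrap via Propositions \ref{pro 4.1} and \ref{pro 4.2} is exactly the paper's argument, and the paper in fact stops there: its proof ends with ``one gets that $u\in C^{1,\theta}(\mathbb{R}^n)$ for some $\theta\in(0,1)$ depending only on $s$,'' not \emph{any} $\theta$. As you correctly diagnose, for $s<1/2$ the iteration gains only $2s$ per round in the $C^{1,\cdot}$-scale and stalls at $\theta<2s$; for $s\geqslant 1/2$ the iteration does reach every $\theta<1$ (one feeds $f\in C^{0,\alpha}$ with $\alpha$ close to $\min\{1,2-2s\}$ into Proposition~\ref{pro 4.2}). Your bridging step --- once $u\in C^{0,\alpha}$ with $\alpha>2s$, move $(-\Delta)^s u\in L^\infty$ to the right and invoke interior $W^{2,q}$ estimates for $-\Delta u$ on unit balls, uniformly over all centers --- is not in the paper and is precisely what delivers the full range $\theta\in(0,1)$ when $s<1/2$. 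So your argument is a genuine refinement: the paper's proof, read literally, establishes a weaker conclusion than the lemma states (though one that suffices for the downstream $C^{2,\alpha}$ argument, where only $u\in C^1$ is actually used), whereas your extra step closes the gap between the paper's proof and the lemma's literal statement. One cosmetic point: make the case split explicit, since ``fix $\alpha\in(0,1)$ with $\alpha>2s$'' is vacuous for $s\geqslant 1/2$; in that regime simply note that the iteration already yields every $\theta<1$, and reserve the Laplacian bridging for $s<1/2$.
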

\begin{proof}
By the uniform $L^{\infty}$-estimate of $u$ (\cite[Theorem 2.2]{Su-Xu}) and the assumption ($h_1$) that  $h\in L^{\infty}(\mathbb{R}^n)$, we get $$\lambda h u^p+u^{2^*-1}\leqslant C\lambda \|h\|_{L^{\infty}}\|u\|_{L^{\infty}}^p+\|u\|_{L^{\infty}}^{2^*-1}\leqslant C(h,p,\lambda,2^*).$$ Thus, the right-hand side of \eqref{equ1} is bounded in $\mathbb{R}^n.$ Applying Proposition \ref{pro 4.1}, we have 
\begin{itemize}
\item if $s\leqslant 1/2$, then for any $\alpha<2s$, $u\in C^{0,\alpha}(\mathbb{R}^n).$
\item if $s> 1/2$, then for any $\alpha<2s-1$, $u\in C^{1,\alpha}(\mathbb{R}^n).$
\end{itemize}
This implies in particular that $g(h,u):=\lambda h u^p+u^{2^*-1}$ is $C^{\alpha}(\mathbb{R}^n).$ Applying now Proposition~\ref{pro 4.2}, we have 
\begin{itemize}
\item if $\alpha+2s\leqslant 1$, then $u\in C^{0,\alpha+2s}(\mathbb{R}^n).$
\item if $\alpha+2s> 1$, then  $u\in C^{1,\alpha+2s-1}(\mathbb{R}^n).$
\end{itemize}

Indeed, if $\alpha+2s>1,$ then one can take $\theta=\alpha+2s-1.$ On the other hand if $\alpha+2s\leqslant1,$ we have that $g(h,u)$ is $C^{0,\alpha+2s}.$ As a sequence, one gets that $u$ is $C^{0,\alpha+4s}.$ Hence iterating a finite number of times, we will end up with $\alpha+2ks>1$ for some integer $k$. Therefore, iterating the procedure a finite number of times, one gets that $u\in C^{1,\theta}(\mathbb{R}^n)$ for some $\theta\in(0,1)$ depending only on $s.$
\end{proof}

Next, we will give the proof of Theorem \ref{thm:1.2}. 
For this, we combine a suitable truncation argument for the solution $u$ with the $C^{1,\alpha}$-regularity argument. By analogous reasoning to the proof of \cite[Theorem 1.4]{SerenaDipierro2025}, the $C^{2,\alpha}$- regularity of the solution follows. For the convenience of the reader, we sketch the proof in the following subsections.

To begin with, we introduce some notations of semi-norms as follows. 
For $\alpha \in (0, 1)$, $k \in \mathbb{N}$, $x_0 \in B_{3/4}$, and $R \in (0, \frac{1}{20})$, we denote the interior norms as follows:
\[
[u]_{\alpha;B_R(x_0)} := \sup_{x,y \in B_R(x_0)} \frac{|u(x) - u(y)|}{|x - y|^{\alpha}},
\]
\[
|u|'_{k;B_R(x_0)} := \sum_{j=0}^{k} R^j \|D^ju\|_{L^\infty(B_R(x_0))}
\]
and
\[
|u|'_{k,\alpha;B_R(x_0)} := |u|'_{k;B_R(x_0)} + R^{k+\alpha}[D^ku]_{\alpha;B_R(x_0)}.
\]

\textbf{Step 1. A mollifier technique and a truncation argument.}
Let $u \in \mathcal{X}^{1,2}(\mathbb{R}^n)$ solve \eqref{equ1} and $\eta_\varepsilon$ be a standard mollifier. For every $x \in \mathbb{R}^n$, $R \in (0, \frac{1}{20})$, and $\varepsilon \in (0, R)$, we denote
\[
u_\varepsilon(x) := (\eta_\varepsilon * u)(x) = \int_{|y| \leqslant \varepsilon} \eta_\varepsilon(y)u(x-y)  dy.
\]
Also, we set
\[
g_\varepsilon := \eta_\varepsilon * (\lambda hu^p + u^{2^*-1}).
\]
Then, we have that
\[
-\Delta u_\varepsilon + (-\Delta)^s u_\varepsilon = g_\varepsilon \quad \text{in } B_1.
\]

Moreover, the following regularity estimates on $u_\varepsilon$ and $g_\varepsilon$ follow as a direct consequence of their definitions.
\begin{lemma}\label{ghgh}
Let $u \in L^\infty(\mathbb{R}^n)$. Then, $u_\varepsilon \in L^\infty(\mathbb{R}^n)$ and
\[
\|u_\varepsilon\|_{L^\infty(\mathbb{R}^n)} \leqslant \|u\|_{L^\infty(\mathbb{R}^n)}.
\]
If in addition $u \in C^1(\mathbb{R}^n)$, then for every $y \in \mathbb{R}^n$,
\[
\|g_\varepsilon\|_{C^\alpha(B_1(y))} \leqslant c_{p,2^*} \left(\lambda\|h\|_{C^1(\R^n)} \|u\|^p_{C^1(\mathbb{R}^n)} + \|u\|_{C^1(\mathbb{R}^n)}^{2^*-1} \right)\]
where $c_{p,2^*}$ is some positive constant depending only on $p, 2^*$.
\end{lemma}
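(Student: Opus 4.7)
The plan is to prove the two assertions separately. The $L^\infty$ bound on $u_\varepsilon$ is immediate from the nonnegativity and unit mass of $\eta_\varepsilon$: one has $|u_\varepsilon(x)| \leqslant \int \eta_\varepsilon(y) |u(x-y)|\, dy \leqslant \|u\|_{L^\infty(\mathbb{R}^n)}$, a one-line argument using Minkowski's inequality.

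For the $C^\alpha$ bound on $g_\varepsilon$, set $F := \lambda h u^p + u^{2^*-1}$. The key observation is that mollification does not increase the $C^\alpha$ seminorm: for any $x,z \in B_1(y)$,
\[
|g_\varepsilon(x) - g_\varepsilon(z)| \leqslant \int \eta_\varepsilon(w) |F(x-w) - F(z-w)|\, dw \leqslant [F]_{C^\alpha(\mathbb{R}^n)} |x-z|^\alpha,
\]
and similarly $\|g_\varepsilon\|_{L^\infty(B_1(y))} \leqslant \|F\|_{L^\infty(\mathbb{R}^n)}$. Hence it suffices to bound $\|F\|_{C^\alpha(\mathbb{R}^n)}$ uniformly in $y$, and this reduces the lemma to a purely algebraic/differential estimate on $F$.

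To control $\|F\|_{C^\alpha(\mathbb{R}^n)}$ I would exploit the hypothesis $u \in C^1(\mathbb{R}^n)$ together with $h \in C^1(\mathbb{R}^n)$ from (h$_1$) and use the embedding $C^1 \hookrightarrow C^\alpha$ for $\alpha \leqslant 1$. By the chain and product rules, for $q \geqslant 1$,
\[
\|u^q\|_{C^1(\mathbb{R}^n)} \leqslant c_q \|u\|_{C^1(\mathbb{R}^n)}^q, \qquad \|h u^p\|_{C^1(\mathbb{R}^n)} \leqslant c_p \|h\|_{C^1(\mathbb{R}^n)} \|u\|_{C^1(\mathbb{R}^n)}^p,
\]
using $\nabla(u^q) = q u^{q-1} \nabla u$, $\nabla(h u^p) = (\nabla h) u^p + p h u^{p-1} \nabla u$, and $\|u\|_{L^\infty} \leqslant \|u\|_{C^1}$. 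Summing the two contributions and absorbing a harmless interpolation constant from $C^1 \hookrightarrow C^\alpha$ into $c_{p,2^*}$ yields the stated inequality.

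The only delicate point is the power law for $u^q$: the identity $\nabla(u^q) = q u^{q-1} \nabla u$ is safe precisely when $q \geqslant 1$, which is the case for $q = p \geqslant 1$ (the regime of Theorem~\ref{thm:1.2}, where this lemma will be applied) and for $q = 2^*-1$. For $p \in (0,1)$ the map $t \mapsto t^p$ fails to be Lipschitz at the origin and one would instead need a H\"older composition estimate — but this case is excluded from the hypotheses of the truncation argument in which this lemma is used, so no extra work is required here. I expect the execution to be mostly routine, with the only care needed in bookkeeping the dependence of $c_{p,2^*}$ on $p$ and $2^*$ rather than on $\|u\|_{C^1}$ or $\|h\|_{C^1}$.
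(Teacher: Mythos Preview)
Your proposal is correct and is precisely the natural justification the paper has in mind: the paper does not give a proof but simply states that the estimates ``follow as a direct consequence of their definitions,'' and your argument (mollification is $L^\infty$- and $C^\alpha$-contractive, followed by the chain/product rule estimate $\|h u^p\|_{C^1} \leqslant c_p \|h\|_{C^1}\|u\|_{C^1}^p$ and $\|u^{2^*-1}\|_{C^1} \leqslant c_{2^*}\|u\|_{C^1}^{2^*-1}$ for $p \geqslant 1$) supplies exactly those details. Your remark that $p \geqslant 1$ is needed for the power law is also on point, and matches the standing hypothesis $p \in [1,2^*-1)$ of the section where the lemma is invoked.
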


We now use a cut off argument for $u_\varepsilon$ to get the $C^{2,\alpha}$-estimate for $u_\varepsilon$. Consider a cut-off function $\phi \in C_0^\infty (\mathbb{R}^n)$ satisfying
\[
\phi \equiv 1 \text{ in } B_{3/2}, \quad \text{supp}(\phi) \subset B_2, \quad \text{and} \quad 0 \leqslant \phi \leqslant 1 \text{ in } \mathbb{R}^n
\]
and let
\[
\phi^R(x) := \phi \left( \frac{x - x_0}{R} \right).
\]

We point out that

\[
B_{4R}(x_0) \subset B_1 \quad \text{and} \quad \text{supp}(\phi^R) \subset B_{2R}(x_0).
\]
With this notation, one obtains the following result.

\begin{lemma}
[\cite{MR4808805} Lemma 5.1] \label{dghs}Let $\alpha \in (0, 1)$ and $g \in C_{\text{loc}}^{\alpha}(B_1)$. Let $u \in C^{2,\alpha}(B_1) \cap L^\infty(\mathbb{R}^n)$ be a solution of
\[
-\Delta u + (-\Delta)^s u = g \quad \text{in } B_1.
\]
Then, there exists $\psi \in C^{\alpha}(B_R(x_0))$ such that $v := \phi^R u$ satisfies
\[
-\Delta v + (-\Delta)^s v = \psi \quad \text{in } B_1.
\]
In particular,
\begin{equation}\label{equoo}
R^2 |\psi|'_{0,\alpha; B_R(x_0)} \leqslant C_{n,s} \left( R^2 |g|'_{0,\alpha; B_R(x_0)} + \|u\|_{L^\infty(\mathbb{R}^n)} \right), 
\end{equation}
for some positive constant $C_{n,s}$.
\end{lemma}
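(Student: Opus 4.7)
The strategy is to exploit that $\phi^R\equiv 1$ on $B_{3R/2}(x_0)\supset B_R(x_0)$, so on the ball where the estimate is needed the cutoff is inactive: $v=u$, $\nabla\phi^R=0$ and $\Delta\phi^R=0$ there. In particular $-\Delta v(x)=-\Delta u(x)$ for every $x\in B_R(x_0)$, while adding and subtracting $u(y)$ in the singular integral gives
\[
(-\Delta)^s v(x)=(-\Delta)^s u(x)+c_{n,s}\int_{|y-x_0|\geqslant 3R/2}\frac{(1-\phi^R(y))u(y)}{|x-y|^{n+2s}}\,dy
\]
for every $x\in B_R(x_0)$; the tail term is a proper (not principal value) integral because $1-\phi^R$ vanishes on a neighborhood of $x$. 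Adding the two identities and using $\mathcal{L}u=g$ yields $\mathcal{L}v(x)=g(x)+K(x)$ on $B_R(x_0)$, where $K(x)$ denotes the tail integral. I then define $\psi:=g+K$ on $B_R(x_0)$ (and extend by $\mathcal{L}v$ elsewhere in $B_1$).

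The key geometric fact is that for $x\in B_R(x_0)$ and $|y-x_0|\geqslant 3R/2$ one has $|x-y|\geqslant |y-x_0|-R\geqslant\tfrac13|y-x_0|$. This immediately gives the pointwise bound
\[
\|K\|_{L^\infty(B_R(x_0))}\leqslant C_{n,s}\|u\|_{L^\infty(\mathbb{R}^n)}\int_{|y-x_0|\geqslant 3R/2}\frac{dy}{|y-x_0|^{n+2s}}\leqslant C_{n,s}R^{-2s}\|u\|_{L^\infty(\mathbb{R}^n)}.
\]
For the H\"older seminorm I differentiate under the integral (legitimate since the integrand is supported away from $x$) and apply the mean value theorem to the kernel $|z|^{-(n+2s)}$, which gives the Lipschitz control
\[
|K(x_1)-K(x_2)|\leqslant C_{n,s}\|u\|_{L^\infty(\mathbb{R}^n)}|x_1-x_2|\int_{|y-x_0|\geqslant 3R/2}\frac{dy}{|y-x_0|^{n+2s+1}}
\]
for $x_1,x_2\in B_R(x_0)$, and consequently $[K]_{\alpha;B_R(x_0)}\leqslant C_{n,s}R^{-2s-\alpha}\|u\|_{L^\infty(\mathbb{R}^n)}$ after using $|x_1-x_2|\leqslant 2R$ to interpolate the Lipschitz bound into an $\alpha$-H\"older bound.

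Combining these estimates with the splitting $|\psi|'_{0,\alpha;B_R(x_0)}=\|\psi\|_{L^\infty(B_R(x_0))}+R^\alpha[\psi]_{\alpha;B_R(x_0)}$ and the triangle inequality against $g$ and $K$, I obtain
\[
R^2|\psi|'_{0,\alpha;B_R(x_0)}\leqslant R^2|g|'_{0,\alpha;B_R(x_0)}+C_{n,s}R^{2-2s}\|u\|_{L^\infty(\mathbb{R}^n)}.
\]
Since $s\in(0,1)$ and $R<1/20<1$ give $R^{2-2s}\leqslant 1$, this is exactly \eqref{equoo}. The only real obstacle is careful bookkeeping of the powers of $R$ in the tail estimates and the interpolation step; no deep analytic difficulty arises because the integrand is supported at distance of order $R$ from $x$, so convergence of the tail integrals and differentiation under the integral sign are automatic.
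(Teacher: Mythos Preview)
Your argument is correct and is exactly the standard cutoff/tail computation that the cited reference \cite{MR4808805} carries out: on $B_R(x_0)$ one has $v=u$, so $-\Delta v=-\Delta u$ and $(-\Delta)^s v=(-\Delta)^s u+K$ with $K$ the smooth tail integral supported in $\{|y-x_0|\geqslant 3R/2\}$; your $L^\infty$ and Lipschitz (hence $C^\alpha$) estimates for $K$ via $|x-y|\geqslant\tfrac13|y-x_0|$ and the interpolation $|x_1-x_2|^{1-\alpha}\leqslant(2R)^{1-\alpha}$ are the right bookkeeping and yield $R^2|K|'_{0,\alpha;B_R(x_0)}\leqslant C_{n,s}R^{2-2s}\|u\|_{L^\infty}\leqslant C_{n,s}\|u\|_{L^\infty}$. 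The paper itself does not reproduce the proof but only invokes the cited lemma, so there is nothing further to compare.
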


As a consequence of Lemma \ref{dghs}, setting $v_\varepsilon := \phi^R u_\varepsilon$, we have that there exists $\psi_\varepsilon \in C^{\alpha}(B_R(x_0))$ such that $v_\varepsilon$ satisfies
\[
-\Delta v_\varepsilon + (-\Delta)^s v_\varepsilon = \psi_\varepsilon \quad \text{in } B_1.
\]
In particular, employing \eqref{equoo}, one finds that
\begin{equation}\label{equhh}
R^2 |\psi'_\varepsilon|_{0,\alpha; B_R(x_0)} \leqslant C_{n,s} \left( R^2 |g_\varepsilon|'_{0,\alpha; B_R(x_0)} + \|u_\varepsilon\|_{L^\infty(\mathbb{R}^n)} \right). 
\end{equation}

We also observe that, since $v_\varepsilon \in C_0^\infty(B_{2R}(x_0))$, for all $\delta > 0$, there exists $C_{\delta} > 0$ such that
\begin{align*}
R^2 |(-\Delta)^s v_\varepsilon|'_{0,\alpha; B_R(x_0)} &= R^2 \|(-\Delta)^s v_\varepsilon\|_{L^\infty(B_R(x_0))} + R^{2+\alpha}[(-\Delta)^s v_\varepsilon]_{\alpha; B_R(x_0)}\\
& \leqslant C_{n,s} |u_\varepsilon|'_{2,\alpha_0; B_{2R}(x_0)} \leqslant \delta |u_\varepsilon|'_{2,\alpha; B_{2R}(x_0)} + C_{\delta} \|u_\varepsilon\|_{L^\infty(B_{2R}(x_0))},
\end{align*}
where
\[
\alpha_0 := 
\begin{cases} 
0 & \alpha < 2 - 2s, \\
\alpha - (1-s) & \alpha \geqslant 2 - 2s.
\end{cases}
\]

Thus, combining this with \cite[Theorem 4.6]{MR1814364}, one could deduce that for all $\delta > 0$ there exists $C_{\delta} > 0$ such that
\begin{align*}
|v_\varepsilon|'_{2,\alpha; B_{R/2}(x_0)} &\leqslant C \left( \|v_\varepsilon\|_{L^\infty(B_R(x_0))} + R^2 \left( |\psi_\varepsilon|'_{0,\alpha; B_R(x_0)} + |(-\Delta)^s v_\varepsilon|'_{0,\alpha; B_R(x_0)} \right) \right) \\
&\leqslant C \left( \|v_\varepsilon\|_{L^\infty(B_R(x_0))} + R^2 |\psi_\varepsilon|'_{0,\alpha; B_R(x_0)} + \delta |u_\varepsilon|'_{2,\alpha; B_{2R}(x_0)} + C_{\delta} \|u_\varepsilon\|_{L^\infty(B_{2R}(x_0))} \right).
\end{align*}

Therefore, for every $x_0 \in B_{3/4}$, recalling the definition of $v_\epsilon$ and exploiting \eqref{equhh} and Lemma~\ref{ghgh}, we conclude that, for all $\delta > 0$ above, there exists $C_\delta$ such that
\begin{equation}\label{equggh}
\begin{aligned}
&|u_\epsilon|'_{2,\alpha;B_{R/2}(x_0)} = |v_\epsilon|'_{2,\alpha;B_{R/2}(x_0)} \\
&\leqslant C \left( R^2 |\psi_\epsilon|'_{0,\alpha;B_R(x_0)} + \delta |u_\epsilon|'_{2,\alpha;B_{2R}(x_0)} + C_\delta \|u\|_{L^\infty(B_{2R}(x_0))} \right) \\
&\leqslant C \left( R^2 |g_\epsilon|'_{0,\alpha;B_R(x_0)} + \|u_\epsilon\|_{L^\infty(\mathbb{R}^n)} + \delta |u_\epsilon|'_{2,\alpha;B_{2R}(x_0)} + C_\delta \|u\|_{L^\infty(B_{2R}(x_0))} \right) \\
&\leqslant C \left( \lambda\|h\|_{C^1(\R^n)} \|u\|^p_{C^1(\mathbb{R}^n)} + \|u\|_{C^1(\mathbb{R}^n)}^{2^*-1} + \|u\|_{L^\infty(\mathbb{R}^n)} + \delta |u_\epsilon|'_{2,\alpha;B_{2R}(x_0)} + C_\delta \|u\|_{L^\infty(B_{2R}(x_0))} \right) \\
&\leqslant C \left( \lambda\|h\|_{C^1(\R^n)} \|u\|^p_{C^1(\mathbb{R}^n)} + \|u\|_{C^1(\mathbb{R}^n)}^{2^*-1} + C_\delta \|u\|_{L^\infty(\mathbb{R}^n)} + \delta |u_\epsilon|'_{2,\alpha;B_{2R}(x_0)} \right),
\end{aligned}
\end{equation}
for some $C > 0$ depending on $n$, $s$, $p$, $2^*$ and $\alpha$.

\textbf{Step 2.  Interior $C^{2,\alpha}$-regularity.}
The estimate in \eqref{equggh}, coupled with the following statement, will allow us to obtain that the $C^{2,\alpha}$-norm of $u_\epsilon$ is bounded uniformly in some ball with respect to $\epsilon$, and thus use Arzelà-Ascoli theorem to complete the proof of Theorem \ref{thm:1.2}. The technical details go as follows.
\begin{proposition}
[\cite{MR4808805} Proposition 5.2] \label{prohh}Let $y \in \mathbb{R}^n$, $d > 0$, and $u \in C^{2,\alpha}(B_d(y))$. Suppose that, for any $\delta > 0$, there exists $\Lambda_\delta > 0$ such that, for any $x \in B_d(y)$ and any $r \in (0,d - |x-y|]$, we have that
\begin{equation}\label{hhhj}
|u|'_{2,\alpha;B_{r/8}(x)} \leqslant \Lambda_\delta + \delta |u|'_{2,\alpha;B_{r/2}(x)}. 
\end{equation}

Then, there exist constants $\delta_0, C > 0$, depending only on $n$, $\alpha$ and $d$, such that
\[
\|u\|_{C^{2,\alpha}(B_{d/8}(y))} \leqslant C \Lambda_{\delta_0}.\]
\end{proposition}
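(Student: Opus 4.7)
The plan is to reduce the claim to a standard absorption argument on weighted interior seminorms of Gilbarg--Trudinger type. For $x \in B_d(y)$, set $d_x := d - |x-y|$, and introduce
$$\|u\|^*_{2,\alpha; B_d(y)} := \sum_{j=0}^{2} \sup_{x \in B_d(y)} d_x^j |D^j u(x)| + \sup_{\substack{x,\, x' \in B_d(y) \\ x \neq x'}} \min(d_x, d_{x'})^{2+\alpha} \frac{|D^2 u(x) - D^2 u(x')|}{|x-x'|^{\alpha}}.$$
Since $u \in C^{2,\alpha}(B_d(y))$ and $d_x \leqslant d$, this quantity is finite.

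The next step is to set up two comparison inequalities. On the one hand, if $(x,r)$ is admissible then $d_{x'} \geqslant r/2$ for every $x' \in B_{r/2}(x)$, and a direct computation yields
$$|u|'_{2,\alpha; B_{r/2}(x)} \leqslant C_1 \|u\|^*_{2,\alpha; B_d(y)}.$$
On the other hand, by choosing the admissible radius $r_x := d_x$ for each $x \in B_d(y)$ and splitting Hölder pairs $(x,x')$ into the close regime $|x-x'| \leqslant c \min(d_x, d_{x'})$ (for which both points lie in a single ball $B_{d_x/8}(x)$ and the local seminorm controls the Hölder quotient) and the far regime (where the Hölder quotient is replaced by the $L^{\infty}$ bound of $D^2 u$ divided by $|x-x'|^{\alpha}$, and those $L^{\infty}$ bounds are in turn controlled by $|u|'_{2,\alpha; B_{d_x/8}(x)}$ and $|u|'_{2,\alpha; B_{d_{x'}/8}(x')}$), one obtains the converse estimate
$$\|u\|^*_{2,\alpha; B_d(y)} \leqslant C_2 \sup_{(x,r) \text{ admissible}} |u|'_{2,\alpha; B_{r/8}(x)}.$$

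Plugging the first comparison into hypothesis \eqref{hhhj} yields, for every admissible $(x,r)$,
$$|u|'_{2,\alpha; B_{r/8}(x)} \leqslant \Lambda_\delta + \delta C_1 \|u\|^*_{2,\alpha; B_d(y)}.$$
Taking the supremum over admissible $(x,r)$ and invoking the second comparison gives
$$\|u\|^*_{2,\alpha; B_d(y)} \leqslant C_2 \Lambda_\delta + \delta C_1 C_2 \|u\|^*_{2,\alpha; B_d(y)}.$$
Choosing $\delta_0 := (2 C_1 C_2)^{-1}$ absorbs the last term into the left-hand side, producing $\|u\|^*_{2,\alpha; B_d(y)} \leqslant 2 C_2 \Lambda_{\delta_0}$. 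Since $d_x \geqslant 7d/8$ for every $x \in B_{d/8}(y)$, the definition of $\|u\|^*_{2,\alpha}$ immediately yields $\|u\|_{C^{2,\alpha}(B_{d/8}(y))} \leqslant C(n,\alpha,d)\,\Lambda_{\delta_0}$, as required.

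The main obstacle is the converse comparison inequality, since the factor $1/8$ in \eqref{hhhj} forces the local seminorms on the right to sit at a substantially smaller scale than the balls $B_{r/2}(x)$ appearing in the hypothesis itself. This requires the two-case split of Hölder pairs described above, executed so that the resulting multiplicative constants depend only on $n$ and $\alpha$ (and in particular are independent of $\delta$), so that the absorption step closes. The a priori finiteness of $\|u\|^*_{2,\alpha; B_d(y)}$, which the absorption tacitly relies on, is guaranteed by the standing assumption $u \in C^{2,\alpha}(B_d(y))$.
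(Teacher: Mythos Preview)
Your argument is correct: the passage from the local hypothesis \eqref{hhhj} to a global bound via the Gilbarg--Trudinger weighted interior norm $\|u\|^*_{2,\alpha;B_d(y)}$ and an absorption at a suitably small $\delta_0$ is exactly the right mechanism, and your two comparison inequalities (including the close/far split for H\"older pairs) hold with constants depending only on $n$ and $\alpha$, so the absorption closes.

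There is nothing to compare against here: the paper does not supply its own proof of this proposition but simply quotes it from \cite{MR4808805}. Your write-up is in fact the standard proof of such ``small-$\delta$ self-improvement'' lemmas (cf.\ \cite[Lemma~6.1 and Theorem~6.2]{MR1814364}), and is almost certainly the argument used in the cited reference as well.
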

\begin{proof}[\textbf{Proof of Theorem \ref{thm:1.2}}]
 We will use Proposition \ref{prohh} with $d := 1/10$ so that, for every $y \in B_{1/2}$,
\[
B_d(y) \subset B_{3/4} \quad \text{and} \quad d/4 < \frac{1}{20}.
\]
Moreover, we notice that the estimate in \eqref{equggh} tells us that formula \eqref{hhhj} is verified in our setting with $u$ replaced by $u_\epsilon$, $r := 4R$ and
\[
\Lambda_\delta :=\lambda\|h\|_{C^1(\R^n)} \|u\|^p_{C^1(\mathbb{R}^n)} + \|u\|_{C^1(\mathbb{R}^n)}^{2^*-1} + C_\delta \|u\|_{L^\infty(\mathbb{R}^n)}.
\]

Therefore, we are in a position to exploit Proposition \ref{prohh}, thus obtaining that, for every $y \in B_{1/2}$,
\[
\|u_\epsilon\|_{C^{2,\alpha}(\overline{B_{1/80}(y)})} \leqslant C \left( \lambda\|h\|_{C^1(\R^n)} \|u\|^p_{C^1(\mathbb{R}^n)} + \|u\|_{C^1(\mathbb{R}^n)}^{2^*-1} + C_\delta \|u\|_{L^\infty(\mathbb{R}^n)} \right).
\]

From the Arzelà-Ascoli Theorem, we obtain that $u \in C^{2,\alpha}(\overline{B_{1/80}(y)})$, for every $y \in B_{1/2}$, and
\[
\|u\|_{C^{2,\alpha}(\overline{B_{1/80}(y)})} \leqslant C \left( \lambda\|h\|_{C^1(\R^n)} \|u\|^p_{C^1(\mathbb{R}^n)} + \|u\|_{C^1(\mathbb{R}^n)}^{2^*-1} + \|u\|_{L^\infty(\mathbb{R}^n)}\right).
\]

Hence, a covering argument and Theorem \ref{regu} give that
\begin{align*}
\|u\|_{C^{2,\alpha}(\overline{B_{1/2}})} &\leqslant C \left( \lambda\|h\|_{C^1(\R^n)} \|u\|^p_{C^1(\mathbb{R}^n)} + \|u\|_{C^1(\mathbb{R}^n)}^{2^*-1} + \|u\|_{L^\infty(\mathbb{R}^n)} \right) \\
&\leqslant C \left( \lambda\|h\|_{C^{1}(\R^n)}+ \|u\|_{L^\infty(\mathbb{R}^n)} \right),
\end{align*}
where the constant $C > 0$ depends on $n$, $s$, $\alpha$, $2^*$ and $p$.

The ball $B_y$ is centered at the origin, but we may arbitrarily move it around $\mathbb{R}^n$.
Covering $\mathbb{R}^n$ with these balls, we obtain the desired result.
\end{proof}

\subsection{Existence theory for viscosity solutions}\label{sec:existence of vs}
According to the above regularity theorem, we prove the weak solutions obtained in Theorem \ref{thm:1.1} are  viscosity solutions. First, we present the relevant stability results for viscosity solutions that are required in the proof process.

\begin{definition}[$\GC$-convergence]
   A sequence of lower-semicontinuous functions $u_{k}$ \emph{$\GC$-converges} to $u$ in a set $\mathbb{R}^n$ if the two following conditions hold:

\begin{itemize}
    \item for every sequence $x_{k} \to x$ in $\mathbb{R}^n$, $\liminf_{k \to \infty} u_{k}(x_{k}) \geqslant u(x)$.
    \item for every $x \in \mathbb{R}^n$, there is a sequence $x_{k} \to x$ in $\mathbb{R}^n$ such that 
    $$
    \limsup_{k \to \infty} u_{k}(x_{k}) = u(x).
    $$
\end{itemize}
\end{definition}
It is natural that a uniformly convergent sequence $u_{k}$ also converges in the $\Gamma$-convergence sense. A fundamental property of $\Gamma$-limits is that if $u_{k}$ $\Gamma$-converges to $u$ and $u$ has a strict local minimum at $x$, then there exists a sequence of points $x_{k} \to x$ such that $u_{k}$ has a local minimum at $x_{k}$.

\begin{lemma}\label{stab}
Let  $u_{k}$ be a sequence of functions that are uniformly bounded in $\R^{n}$ and lower-semicontinuous in $\R^n$ such that

\begin{enumerate}
    \item[(i)] $-\Delta u_k+(-\Delta)^s u_{k} \leqslant f_{k}$ in $\R^n$,
    \item[(ii)] $u_{k} \to u$ in the $\GC$ sense in $\R^n$,
    \item[(iii)] $u_{k} \to u$ a.e. in $\R^{n}$, 
    \item[(iv)] $f_{k} \to f$ locally uniformly in $\R^n$ for some continuous function $f$.
\end{enumerate}

Then $-\Delta u+(-\Delta)^s u \leqslant f$ in $\R^n$.
\end{lemma}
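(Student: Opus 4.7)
The plan is to adapt the classical stability argument for viscosity subsolutions to the mixed operator $\mathcal{L} = -\Delta + (-\Delta)^s$, combining a perturbation that makes the touching strict, the $\GC$-convergence property recalled just before the lemma to relocate the touching point along the sequence, and a split of the nonlocal integral to pass to the limit. First, fix $x_0 \in \R^n$ and a test function $\phi \in C^2(B_R(x_0))$ with $\phi(x_0) = u(x_0)$, $\phi \geqslant u$ in $B_R(x_0)$, and the associated $v$ as in the definition of viscosity subsolution. The goal is to derive $-\Delta v(x_0) + (-\Delta)^s v(x_0) \leqslant f(x_0)$. I would first replace $\phi$ by $\phi_\eta(x) := \phi(x) + \eta|x - x_0|^2$, so that $\phi_\eta - u$ has a strict local minimum at $x_0$ on a sub-ball $B_{R_0}(x_0)$. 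The corresponding $v_\eta$ perturbs $\mathcal{L} v$ at $x_0$ by $O(\eta)$, so it suffices to verify the inequality for $v_\eta$ and let $\eta \to 0$.

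Next, apply the $\GC$-convergence property to the shifted sequence $\phi_\eta - u_k$, which inherits a $\GC$-type convergence to $\phi_\eta - u$ from the continuity of $\phi_\eta$ and the hypothesis $u_k \to u$ in the $\GC$-sense. Since $\phi_\eta - u$ has a strict local minimum at $x_0$, there exist points $x_k \to x_0$ where $\phi_\eta - u_k$ attains a local minimum, and by adding a constant $c_k \to 0$ one can arrange $\phi_\eta + c_k \geqslant u_k$ on a neighbourhood of $x_k$ with equality at $x_k$. Define
\[
v_k(x) := \begin{cases} \phi_\eta(x) + c_k & \text{if } x \in B_{R_0}(x_0), \\ u_k(x) & \text{otherwise.} \end{cases}
\]
The viscosity subsolution property of $u_k$ at $x_k$ then gives $-\Delta v_k(x_k) + (-\Delta)^s v_k(x_k) \leqslant f_k(x_k)$.

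To conclude, pass to the limit $k \to \infty$. The Laplacian term is immediate since $-\Delta v_k(x_k) = -\Delta \phi_\eta(x_k) \to -\Delta \phi_\eta(x_0)$ by smoothness of $\phi_\eta$, and the right-hand side converges as $f_k(x_k) \to f(x_0)$ by locally uniform convergence and continuity of $f$. For the nonlocal part, split
\[
(-\Delta)^s v_k(x_k) = c_{n,s}\int_{B_\rho(x_k)} \frac{v_k(x_k) - v_k(y)}{|x_k - y|^{n+2s}}\,dy + c_{n,s}\int_{\R^n \setminus B_\rho(x_k)} \frac{v_k(x_k) - v_k(y)}{|x_k - y|^{n+2s}}\,dy
\]
with $\rho > 0$ fixed and small so that $B_\rho(x_k) \subset B_{R_0}(x_0)$ for large $k$. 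The inner integral reduces to the principal-value singular integral of $\phi_\eta$ and converges to the analogous expression at $x_0$ by uniform $C^2$-control. The exterior integral benefits from $|x_k - y|^{n+2s}$ being bounded below, the uniform $L^\infty$-bound on $u_k$, and the pointwise a.e.\ convergence $u_k \to u$, so Lebesgue's dominated convergence theorem applies. Collecting the limits yields $\mathcal{L} v_\eta(x_0) \leqslant f(x_0)$, and sending $\eta \to 0$ completes the argument.

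The hard part will be the interaction between the moving base point $x_k$ and the singular nonlocal kernel: one must secure a $\rho$ independent of $k$ and a single integrable majorant for the outer integrand, so that a.e.\ convergence alone suffices. This is resolved by the uniform $L^\infty$-bound on $u_k$ together with the elementary comparison $|x_k - y|^{-n-2s} \leqslant 2^{n+2s}|x_0 - y|^{-n-2s}$ on $\{|x_0 - y| \geqslant 2\rho\}$ for $k$ large. A secondary subtlety is the principal value interpretation of $(-\Delta)^s$ at a varying point, which is handled by the uniform smoothness of $\phi_\eta$ across the inner region and the evenness of the increment $v_k(x_k + z) + v_k(x_k - z) - 2 v_k(x_k)$ there.
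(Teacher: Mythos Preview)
Your proposal is correct and follows essentially the same strategy as the paper: relocate the touching point along the sequence via the $\GC$-convergence property, apply the subsolution inequality for $u_k$ to the modified test function $v_k$, and pass to the limit in both the local and nonlocal parts of $\mathcal{L}$. The only differences are that you make explicit two points the paper glosses over: the $\eta$-perturbation $\phi_\eta=\phi+\eta|x-x_0|^2$ ensuring a \emph{strict} minimum (the paper simply invokes the $\GC$-property without enforcing strictness), and the inner/outer splitting of the singular integral together with dominated convergence for the tail, whereas the paper compresses this into the one-line observation that $v_k-v=d_k$ in the touching neighbourhood plus (iii) yields locally uniform convergence of $\mathcal{L}v_k$. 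Your write-up is the more careful of the two, but the underlying argument is the same.
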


\begin{proof}
Let $\varphi$ be a test function from below for $u$ touching at a point $x$ in a neighborhood $N$. Since $u_{k}$ $\GC$-converges to $u$ in $\R^n$, for large $k$ we can find $x_{k}$ and $d_{k}$ such that $\varphi + d_{k}$ touches $u_{k}$ at $x_{k}$. Moreover, $x_{k} \to x$ and $d_{k} \to 0$ as $k \to +\infty$. 

Since $-\Delta u_k+(-\Delta)^s u_{k} \leqslant f_{k}$, if we let
\[
v_{k} = 
\begin{cases}
\varphi + d_{k} & \text{in } N, \\
u_{k} & \text{in } \R^{n} \setminus N,
\end{cases}
\]
we have $-\Delta v_k(x_k)+(-\Delta)^s v_{k}(x_k) \leqslant f_{k}(x_{k}, v(x_k))$.

Next, prove that $-\Delta v_k(z)+(-\Delta)^s v_k(z)\to -\Delta v(z)+(-\Delta)^s v(z)$ is locally uniform in $N.$ 
Since $v_k-v=d_k$ in $N$ and (iii), we have $(-\Delta+(-\Delta)^s )v_k\to (-\Delta+(-\Delta)^s )v$ locally uniformly in $N.$ Finally, we have that $-\Delta v+(-\Delta)^s v$ is continuous in $N$. We now compute
\begin{align*}
&\left|(-\Delta) v_k(x_k)+(-\Delta)^s v_k(x_k)-(-\Delta) v(x)+(-\Delta)^s v(x)\right|\\
&\leqslant \left|\left((-\Delta+(-\Delta)^s)\right)v_k(x_k)-\left((-\Delta+(-\Delta)^s)\right)v(x_k)\right|+\left|\left((-\Delta+(-\Delta)^s)\right)v(x_k)-\left((-\Delta+(-\Delta)^s)\right)v(x)\right|.
\end{align*}
So $-\Delta v_{k}(x_{k})+(-\Delta)^s v_k(x_k)$ converges to $-\Delta v(x_{k})+(-\Delta)^s v(x_k)$ as $k \to \infty$. Since $x_{k} \to x$ and $f_{k} \to f$ locally uniformly, we also have $f_{k}(x_{k}) \to f(x, v(x))$, which finally implies $-\Delta v(x)+(-\Delta)^s v(x) \leqslant f(x, v(x))$.
\end{proof}

\begin{proof}[\textbf{Proof of Theorem \ref{thm:vis}}]
By Lemma \ref{regu} and Theorem \ref{thm:1.1}, we have:
 $$u \in C(\R^n),$$ which satisfies the basic continuity requirement for the definition of viscosity solutions.


Let $f(x,u(x)):=\lambda h u^p+u^{2^*-1}.$ Now, let us fix $x_0\in\mathbb{R}^n.$ We consider the mollifier $\rho_{\epsilon}(x):=\epsilon^{-n}\rho(x/\epsilon),\epsilon>0.$ We define $u_{\epsilon}:=u*\rho_{\epsilon}$ and $f_{\epsilon}:=f*\rho_{\epsilon}.$ Then we  recalling the stability of viscosity solutions (see Lemma \ref{stab}) we may suppose that $u$ is also smooth in a neighborhood $\Omega$ of $x_0$ and, from \eqref{equu:2.5}, for any  $\varphi\in C_0^{\infty}(\Omega)$ 
\begin{equation*}
		\begin{aligned}&\int_{\R^n}-\Delta u \varphi dx+\int_{\R^{2n}}(-\Delta)^s u \varphi dx\\
			&=\int_{\mathbb{R}^n}\triangledown u(x)\cdot\triangledown \varphi(x)\, dx+\iint_{\mathbb{R}^{2n}}\frac{\left(u(x)-u(y)\right)\left(\varphi(x)-\varphi(y)\right)}{|x-y|^{N+2s}}\, dxdy\\
			&\,\,\,\,\,=\int_{\mathbb{R}^n}f(x,u(x)) \varphi \, dx.
		\end{aligned}
	\end{equation*}
Therefore,
$$-\Delta u+(-\Delta)^s u=f(x,u(x))$$
 for almost any $x\in\Omega$ and, in fact, for any $x\in\Omega.$
 
Then take a test function $\phi \in C^2(B_R(x_0))$ such that:
\[
\phi(x_0) = u(x_0), \quad \phi(x) \geqslant u(x) \quad \forall x \in B_R(x_0).
\]
Define a function $v$ by
\[
v(x) = 
\begin{cases}
\phi(x), & x \in B_R(x_0), \\
u(x), & x \in \R^n \setminus B_R(x_0).
\end{cases}
\]
We need to prove:
\[
-\Delta v(x_0) + (-\Delta)^s v(x_0) \leqslant f(x_0,v(x_0)).
\]

 At the minimum point, we have  $ -\Delta v(x_0) \leqslant -\Delta \phi(x_0),$ then 
  \begin{equation}\label{equ4.2.}
  -\Delta v(x_0) \leqslant -\Delta u(x_0). 
  \end{equation}

Since $v\geqslant u$ in $\mathbb{R}^n$ and $v(x_0)=u(x_0)$, we obtain
  \begin{equation}\label{equ4.3.}
  \begin{aligned}
  (-\Delta)^s v(x_0) &= c(n,s) \int_{\R^n} \frac{2v(x_0) - v(x_0+y) - v(x_0-y)}{|y|^{n+2s}} \, dy\\
  &\leqslant c(n,s) \int_{\R^n} \frac{2u(x_0) - u(x_0+y) - u(x_0-y)}{|y|^{n+2s}} \, dy.
  \end{aligned}
  \end{equation}
  Combining \eqref{equ4.2.} and \eqref{equ4.3.}, we get that $-\Delta v(x_0)+(-\Delta)^s v(x_0)\leqslant -\Delta u(x_0)+(-\Delta)^s u(x_0)=f(x_0, v(x_0)).$
   This shows that $u$ is a viscosity subsolution, and one can prove similarly that $u$ is a viscosity supersolution.

Since $u$ is both a viscosity subsolution and a viscosity supersolution of equation \eqref{equ1}, we conclude that $u$ is a viscosity solution of \eqref{equ1}.
\end{proof}

\section{Qualitative properties of positive classical solutions}
In this section, we will discuss the power-type decay estimates and radial symmetry of the positive classical solutions to equation \eqref{equ1}. In Section \ref{sec5}, we apply the maximum principle and comparison arguments to obtain the decay at infinity of positive classical solutions. In Section \ref{sec6}, we employ the method of moving planes to show that these classical solutions are also radially symmetric.

\subsection{Power-type decay of positive solutions}\label{sec5}
In this section, we shall apply the maximum principle and comparison arguments to obtain the decay at infinity of positive classical solutions.

First, we devote  to establishing
the existence of positive classical solutions.
\begin{theorem}\label{thm5.7}
Let \(n\geqslant 4\) and \(s\in(0,1)\).

Then, problem \eqref{equ1} has a classical solution, which satisfies \(u>0\) in \(\mathbb{R}^{n}\). Moreover,
\begin{equation}\label{5.10equ}
\lim_{|x|\to+\infty}u(x)=0. 
\end{equation}
\end{theorem}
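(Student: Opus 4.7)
The plan is to assemble Theorem~\ref{thm5.7} from three ingredients already present in the excerpt: a nontrivial nonnegative weak solution produced by \cite{Su-Xu}, the regularity and decay statements from Theorem~\ref{thm:1.2}, and a strong maximum principle for the mixed operator $\mathcal{L}$. Throughout, I will work in the range $p \in [1,2^*-1)$, which is the setting of Section~\ref{sec5} and of Theorem~\ref{thm4.8}. First, I would invoke the existence results from \cite{Su-Xu} (recalled in the introduction), which furnish a nontrivial weak solution $u \in \mathcal{X}^{1,2}(\mathbb{R}^n)$ of \eqref{equ1} for suitable $\lambda$. Nonnegativity $u \geq 0$ is then obtained by the standard argument of testing \eqref{equu:2.5} against $\varphi = u^-$: the pointwise bound $(u(x)-u(y))(u^-(x)-u^-(y)) \leq -(u^-(x)-u^-(y))^2$, together with the right-hand side vanishing against $u^-$, forces $\|u^-\| = 0$.

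Second, since $p \in [1,2^*-1)$, Theorem~\ref{thm:1.2} applies directly to $u$ and upgrades it to $u \in C^{2,\alpha}(\mathbb{R}^n)$, so that $u$ is in particular a \emph{classical} solution of \eqref{equ1}; moreover, the same theorem yields the decay $\lim_{|x|\to+\infty}u(x)=0$ asserted in \eqref{5.10equ}. Hence the only thing left is to promote $u \geq 0$ to the strict inequality $u > 0$ on $\mathbb{R}^n$.

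For this last step I would run a strong maximum principle adapted to $\mathcal{L}$. Suppose, for contradiction, that $u(x_0)=0$ for some $x_0 \in \mathbb{R}^n$. Since $u$ is continuous and nonnegative, $x_0$ is a global minimum, so the second-derivative test gives $-\Delta u(x_0) \leq 0$, and the principal-value formula
\[
(-\Delta)^s u(x_0) = c_{n,s}\,\mathrm{P.V.}\!\int_{\mathbb{R}^n}\frac{u(x_0)-u(y)}{|x_0-y|^{n+2s}}\,dy
\]
has pointwise nonpositive integrand, whence $(-\Delta)^s u(x_0) \leq 0$. But the equation (now satisfied classically) pins $\mathcal{L}u(x_0) = \lambda h(x_0) u(x_0)^p + u(x_0)^{2^*-1} = 0$, so both terms must vanish; the vanishing of the nonlocal integral together with the nonpositivity of its integrand forces $u \equiv 0$, contradicting nontriviality. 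The main obstacle I anticipate is ensuring, at the very first step, that the \cite{Su-Xu} solution is indeed nontrivial---this should be read off from the variational (mountain-pass) construction there, which by design selects a nonzero critical point; everything afterwards is a clean combination of Theorem~\ref{thm:1.2} with a strong maximum principle that is standard for $\mathcal{L}$ because the nonlocal contribution alone suffices to drive the contradiction at a would-be zero.
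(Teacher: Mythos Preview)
Your proposal is correct and follows essentially the same route as the paper: existence of a nonnegative weak solution from \cite{Su-Xu}, upgrade to a classical solution with decay via Theorem~\ref{thm:1.2}, and then strict positivity by evaluating $\mathcal{L}u$ at a would-be zero and using the sign of the nonlocal term to force $u\equiv 0$. Your presentation is in fact slightly more explicit than the paper's (you spell out the $u^-$ test for nonnegativity and the step ``$(-\Delta)^s u(x_0)=0$ with nonpositive integrand $\Rightarrow u\equiv 0$'', whereas the paper directly asserts $(-\Delta)^s u(x_0)<0$ using nontriviality), but the argument is the same.
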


\begin{proof}
From Theorem~\ref{thm:1.2}, we have that there exists a nonnegative classical solution of \eqref{equ1}.
Now, we focus on proving that \(u>0\) in \(\mathbb{R}^{n}\). For this, we argue by contradiction, and we assume that there exists a global minimum point \(x_{0}\in\mathbb{R}^{n}\) at which \(u(x_{0})=0\). Accordingly, we have that \(\Delta u(x_{0})\geqslant 0\) and \((-\Delta)^{s}u(x_{0})<0\). As a result, we deduce from \eqref{equ1}, $h>0$ and $\lambda>0$ that
\[
0=\lambda hu^{p}(x_{0})+u^{2^*-1}(x_0)=-\Delta u(x_{0})+(-\Delta)^{s}u(x_{0})+u(x_{0})<0,
\]
which is a contradiction.

On the other hand, \eqref{5.10equ} follows immediately from Theorem \ref{regu}.
\end{proof}

To prove the decay estimate of positive solutions, we start with the following two lemmas, which construct suitable subsolutions and supersolutions.	
\begin{lemma}\label{lem4.6}
Let $n>4s$, There exists a function \( \omega \in C^{1,\alpha}(\mathbb{R}^n) \) satisfying  
\begin{equation}\label{equ4.10}
\begin{cases} 
-\Delta \omega + (-\Delta)^s \omega = 0 & \text{in } \mathbb{R}^n \setminus B_1, \\ 
\omega > 0 & \text{in } \mathbb{R}^n, \\ 
\lim_{|x| \to +\infty} \omega(x) = 0 
\end{cases}
\end{equation}
in the distribution sense and for every \( |x| > 1 \),  
\[
\omega(x) \geqslant \frac{c_1}{|x|^{n+2s}},
\]  
for some constant \( c_1 > 0 \).
\end{lemma}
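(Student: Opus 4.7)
The strategy is to construct $\omega$ explicitly as a convolution of the fundamental solution $\mathcal{Z}$ of the mixed operator $\mathcal{L}=-\Delta+(-\Delta)^s$ with a smooth bump supported in $B_1$. Concretely, I would fix a function $g\in C_c^\infty(B_1)$ with $g\geqslant 0$ and $g\not\equiv 0$, and define
\[
\omega(x):=(\mathcal{Z}*g)(x)=\int_{B_1}\mathcal{Z}(x-y)\,g(y)\,dy.
\]
Since $\mathcal{Z}\in L^1_{\rm loc}(\mathbb{R}^n)$ (the singularity at the origin is of order $|x|^{-(n-2)}$ by Lemma \ref{lem4.1}, which is locally integrable for $n\geqslant 3$, and the case $n\geqslant 4$ is well inside this range), convolution with the test function $g$ produces $\omega\in C^\infty(\mathbb{R}^n)\subset C^{1,\alpha}$.

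For the equation, Lemma \ref{lem4.4} tells us that $\mathcal{Z}$ is the fundamental solution, i.e.\ $\mathcal{L}\mathcal{Z}=\delta_0$ in the distributional sense. Convolving both sides with $g$ gives $\mathcal{L}\omega=g$ in $\mathcal{D}'(\mathbb{R}^n)$; since $\omega$ is smooth and $\omega(x)\to 0$ sufficiently fast (so that $(-\Delta)^s\omega$ is well-defined pointwise), this distributional identity becomes classical, and because $g\equiv 0$ outside $B_1$, we get $\mathcal{L}\omega=0$ in $\mathbb{R}^n\setminus B_1$. Positivity is immediate: $\mathcal{Z}>0$ everywhere (Lemma \ref{lem4.2}) and $g\geqslant 0$ is nontrivial, hence $\omega>0$ on $\mathbb{R}^n$.

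For decay at infinity and the lower bound $\omega\geqslant c_1|x|^{-n-2s}$, I would use the two-sided bounds on $\mathcal{Z}$. For $|x|>2$ and $y\in B_1$ one has $1<|x-y|\leqslant 2|x|$, so Lemma \ref{lem4.1} gives
\[
\omega(x)\leqslant \|g\|_{L^1}\sup_{y\in B_1}\frac{c_2}{|x-y|^{n-2s}}\leqslant\frac{C}{(|x|-1)^{n-2s}}\xrightarrow{|x|\to\infty}0,
\]
while Lemma \ref{lem4.2} gives
\[
\omega(x)\geqslant \int_{B_1}\frac{c_3\,g(y)}{|x-y|^{n+2s}}\,dy\geqslant \frac{c_3\|g\|_{L^1}}{(2|x|)^{n+2s}}=\frac{c_1}{|x|^{n+2s}}.
\]
On the annulus $\{1<|x|\leqslant 2\}$, continuity and strict positivity of $\omega$ provide a uniform lower bound, which trivially dominates $c_1|x|^{-n-2s}$ after adjusting the constant.

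The only mildly delicate point is justifying that the distributional equation $\mathcal{L}\omega=g$ is actually attained pointwise outside $B_1$; this reduces to checking that $(-\Delta)^s\omega(x)$ is well-defined as a principal value for every $x\notin\overline{B_1}$, which follows from the smoothness of $\omega$ and the fact that $\omega(x)=O(|x|^{-(n-2s)})$, placing $\omega$ in the natural weighted tail space for the fractional Laplacian. No other obstacle arises; the argument is essentially a verification using the heat-kernel estimates of Section \ref{sec2}.
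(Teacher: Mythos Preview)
Your proposal is correct and follows essentially the same route as the paper: the paper also defines $\omega$ as the convolution of $\mathcal{Z}$ with a nonnegative compactly supported function (it uses $\chi_{B_{1/2}}$ instead of a smooth bump $g\in C_c^\infty(B_1)$) and then reads off the equation, positivity, and two-sided decay from the bounds on $\mathcal{Z}$ in Lemmas \ref{lem4.1}--\ref{lem4.2}. Your choice of a smooth $g$ is a harmless variant that makes the $C^{1,\alpha}$ (indeed $C^\infty$) regularity immediate, and your split into $|x|>2$ versus $1<|x|\leqslant 2$ for the lower bound is slightly more careful than the paper's direct computation.
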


\begin{proof}
Define \(\omega := \mathcal{Z} * \chi_{B_{1/2}}\), where \(\chi_{B_{1/2}}\) is the characteristic function of the ball \(B_{1/2}\).	Since $\chi_{B_{1/2}}\in L^q(\mathbb{R}^n)$ for $q\geqslant1,$ we have that $\omega\in C^{1,\alpha}(\mathbb{R}^n)$ for any $\alpha\in(0,1).$ As a result, the limit in \eqref{equ4.10} is also satisfied.

By \eqref{sou}, for any $f\in L^2(\mathbb{R}^n)$, $u=\mathcal{Z}*f$ satisfies: 
$$-\Delta u+(-\Delta)^s u =f,$$
Taking $f=\chi_{B_{1/2}}$ we have in $\mathbb{R}^n\setminus B_1$ that $f\equiv0$, hence \(\omega > 0\) in $\mathbb{R}^n$ solves
\[
-\Delta \omega + (-\Delta)^s \omega  = 0 \quad \text{in } \mathbb{R}^n\setminus B_1,
\]
and therefore the equation in \eqref{equ4.10} is satisfied.

%
%
Using the lower bound on \(\mathcal{Z}\), for any \(|x| > 1\), one has that
\begin{align*}
\omega(x) &= \int_{B_{1/2}} \mathcal{Z}(x - y)  dy \geqslant c \int_{B_{1/2}} \frac{dy}{|x - y|^{n+2s}} \\
&\geqslant c \int_{B_{1/2}} \frac{dy}{(|x| + |y|)^{n+2s}} \geqslant \frac{c}{|x|^{n+2s}},
\end{align*}
up to renaming \(c > 0\). As a consequence of this, we complete the proof of the lemma.
\end{proof}
\begin{lemma}\label{lem4.7}
Let $n>4,$ there exists \( v \in C^{1,\alpha}(\mathbb{R}^n) \) satisfying
\[
\begin{cases}
-\Delta v + (-\Delta)^s v = 0 & \text{in } \mathbb{R}^n \setminus B_1, \\
v > 0 & \text{in } \mathbb{R}^n, \\
\lim_{|x|\to+\infty} v(x) = 0.
\end{cases}
\]
Also, when \( |x| > 1 \), we have that
\[
0 < v(x) \leqslant \frac{c_2}{|x|^{n-2s}},
\]
for some constant \( c_2 > 0 \).
\end{lemma}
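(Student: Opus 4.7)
The plan is to mirror the construction from Lemma~\ref{lem4.6}: set
\[
v(x):=(\mathcal{Z}\ast\chi_{B_{1/2}})(x)=\int_{B_{1/2}}\mathcal{Z}(x-y)\,dy.
\]
In fact the same function serves both lemmas --- the lower bound on $\mathcal{Z}$ from Lemma~\ref{lem4.2} gave the $|x|^{-(n+2s)}$ estimate of Lemma~\ref{lem4.6}, while the upper bound on $\mathcal{Z}$ from Lemma~\ref{lem4.1} will produce the $|x|^{-(n-2s)}$ estimate claimed here. The four structural items (regularity, PDE, positivity, and decay at infinity) are established exactly as in Lemma~\ref{lem4.6}, so I would only sketch them and then concentrate on the new upper bound.

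For the structural items: $v\in C^{1,\alpha}(\mathbb{R}^n)$ for every $\alpha\in(0,1)$ follows from the local integrability of $\mathcal{Z}$ (Lemma~\ref{lem4.1}) combined with the fact that $\chi_{B_{1/2}}$ is bounded and compactly supported. By Lemma~\ref{lem4.4}, $\mathcal{Z}$ is the fundamental solution of $-\Delta+(-\Delta)^s$, so $-\Delta v+(-\Delta)^s v=\chi_{B_{1/2}}$ in the distributional sense on $\mathbb{R}^n$; since $\chi_{B_{1/2}}$ vanishes outside $B_{1/2}\subset B_1$, the homogeneous equation holds on $\mathbb{R}^n\setminus B_1$. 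Positivity of $v$ is immediate from $\mathcal{Z}>0$ (Lemma~\ref{lem4.2}) together with $\chi_{B_{1/2}}\not\equiv 0$, while $v(x)\to 0$ as $|x|\to+\infty$ follows from the upper bound on $\mathcal{Z}$ via dominated convergence.

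For the pointwise upper bound I would split according to the size of $|x|$. If $|x|\geqslant 2$ and $y\in B_{1/2}$, then $|x-y|\geqslant|x|-\tfrac{1}{2}\geqslant|x|/2>1$, so Lemma~\ref{lem4.1} applies to give $\mathcal{Z}(x-y)\leqslant c\,|x-y|^{-(n-2s)}\leqslant 2^{n-2s}c\,|x|^{-(n-2s)}$; integrating over $y\in B_{1/2}$ yields $v(x)\leqslant C\,|x|^{-(n-2s)}$. In the remaining annulus $1<|x|<2$, the function $v$ is continuous on the compact set $\overline{B_{2}}\setminus B_{1}$ and therefore uniformly bounded by some constant $M$; since $|x|^{-(n-2s)}\geqslant 2^{-(n-2s)}$ there, the estimate follows after enlarging $c_2$.

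The only subtle point is this intermediate annulus $1<|x|<2$, where one cannot directly invoke the far-field decay of $\mathcal{Z}$ because $|x-y|$ may drop below $1$. This is handled trivially by continuity and compactness, so no substantive obstacle is anticipated: the proof is the natural dual of Lemma~\ref{lem4.6}, using the upper estimate of $\mathcal{Z}$ in place of its lower counterpart.
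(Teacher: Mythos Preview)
Your proposal is correct and follows essentially the same route as the paper: define $v=\mathcal{Z}\ast\chi_{B_{1/2}}$, inherit the structural properties from Lemma~\ref{lem4.6}, and use the upper bound on $\mathcal{Z}$ from Lemma~\ref{lem4.1} to obtain the $|x|^{-(n-2s)}$ decay. The paper handles the whole region $|x|>1$ in one stroke via $|x-y|\geqslant |x|-\tfrac12$ (implicitly using that the $|x|^{-(n-2)}$ bound for $|x|\leqslant1$ is dominated by $|x|^{-(n-2s)}$), whereas your split into $|x|\geqslant2$ and $1<|x|<2$ is a slightly more explicit variant of the same argument.
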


\begin{proof}
The proof is similar to the lemma \ref{lem4.6}, we will now present the parts where it differs. We also consider the function \( v := \mathcal{Z} * \chi_{B_{1/2}} \), and it satisfies
\[
-\Delta v + (-\Delta)^s v = \chi_{B_{1/2}} \quad \text{in } \mathbb{R}^n.
\]
Moreover, from Lemma \ref{lem4.1}, when \( |x| > 1 \), we have that
\begin{align*}
v(x) &= \int_{B_{1/2}} \mathcal{Z}(x - y)  dy \leqslant \int_{B_{1/2}} \frac{c  dy}{|x - y|^{n-2s}} \\
&\leqslant \int_{B_{1/2}} \frac{c  dy}{(|x| - 1/2)^{n-2s}} \leqslant \frac{c}{|x|^{n-2s}},
\end{align*}
for some positive constant \( c \) that may change from step to step,
%
%
%
%
and also that
\[
\lim_{|x| \to +\infty} v(x) = 0.
\]
\end{proof}

\begin{proof}[\textbf{Proof of  the power-type decay in Theorem \ref{thm4.8}}]
We consider the function \( \omega \) given by Lemma \ref{lem4.6}. Utilizing the positivity and continuity of \( u \) and \( \omega \) in \( \mathbb{R}^n \), we can find a constant \( \beta > 0 \) such that \( h := u - \beta \omega > 0 \) in \( B_1 \).

Furthermore, from \eqref{equ1} and Lemma \ref{lem4.6}, we see that
\[
\begin{cases}
-\Delta h + (-\Delta)^s h \geqslant 0 & \text{in } \mathbb{R}^n \setminus B_1, \\
\lim_{|x| \to +\infty} h(x) = 0.
\end{cases}
\]

We claim that  
\begin{equation}\label{equ4.11}
h(x) \geqslant 0 \quad \text{for all } |x| > 1. 
\end{equation}

Indeed, let $h^- = \max(-h, 0)$ be the negative part of $u$. Since $h \in \mathcal{X}^{1,2}(\mathbb{R}^n)$, we have $h^- \in \mathcal{X}^{1,2}(\mathbb{R}^n)$. Using $h^-$ as a test function:
		\begin{align*}
			\langle \mathcal{L} h, h^- \rangle &= \int_{\mathbb{R}^n} \nabla h \cdot \nabla h^- \, dx + \iint_{\mathbb{R}^{2n}} \frac{(h(x)-h(y))(h^-(x)-h^-(y))}{|x-y|^{n+2s}} \, dx\, dy .
		\end{align*}
		
		Note that:
		\begin{itemize}
			\item $\nabla h \cdot \nabla h^- = -|\nabla h^-|^2$
			\item $(h(x)-h(y))(h^-(x)-h^-(y)) = -(h^-(x)-h^-(y))^2 - h^+(x)h^-(y) - h^-(x)h^+(y) \leqslant 0$
		\end{itemize}
		
		Hence $\langle \mathcal{L} h, h^- \rangle \leqslant 0$. But by assumption $\langle \mathcal{L} h, h^- \rangle \geqslant 0$, so we must have:
		\[
		\langle \mathcal{L} h, h^- \rangle = 0
		\]
		This forces $h^- \equiv 0$, thus $h \geqslant 0$ in $|x|\geqslant 1$.
Hence, for every \( |x| > 1 \),  
\[
u(x) \geqslant \beta \omega(x) \geqslant \frac{c_1}{|x|^{n+2s}}.
\]  

This establishes the bound from below in Theorem \ref{thm4.8}.  

%

We now focus on the bound from above. Owing to $\lim\limits_{|x|\to+\infty}u(x)=0,$ we can find some $R>1$ such that
\[
-\Delta u+(-\Delta)^{s}u= 0\qquad\text{in }\mathbb{R}^{n} \setminus B_{R}.
\]

In this case, we make use of the function $v$ given by Lemma \ref{lem4.7}. From the positivity and continuity of $u$ and $v$ in $\mathbb{R}^{n}$, there exists $\gamma>0$ such that $g:=v-\gamma u>0$ in $\overline{B_{R}}$.

In view of Theorem \ref{thm5.7} and Lemma \ref{lem4.7}, one has that
\[
\begin{cases}
-\Delta g+(-\Delta)^{s}g=0 & \text{in } \mathbb{R}^{n}\setminus B_{R}, \\ 
\lim\limits_{|x|\to+\infty}g(x)=0.
\end{cases}
\]

We claim that
\begin{equation}\label{equ5.13}
g(x)\geqslant 0\qquad\text{for all }|x|>R. 
\end{equation}

Indeed, let $g^- = \max(-g, 0)$ be the negative part of $u$. Since $g \in \mathcal{X}^{1,2}(\mathbb{R}^n)$, we have $g^- \in \mathcal{X}^{1,2}(\mathbb{R}^n)$. Using $g^-$ as a test function:
		\begin{align*}
			\langle \mathcal{L} g, g^- \rangle &= \int_{\mathbb{R}^n} \nabla g \cdot \nabla g^- \, dx + \iint_{\mathbb{R}^{2n}} \frac{(g(x)-g(y))(g^-(x)-g^-(y))}{|x-y|^{n+2s}} \, dx\, dy .
		\end{align*}
		Hence $\langle \mathcal{L} g, g^- \rangle \leqslant 0$. But by assumption $\langle \mathcal{L} g, g^- \rangle \geqslant 0$, so we must have:
		\[
		\langle \mathcal{L} g, g^- \rangle = 0.
		\]
		This forces $g^- \equiv 0$, thus $g \geqslant 0$ in $|x|\geqslant R$.
In turn, \eqref{equ5.13} implies that, for all $|x|>R$,
\[
u(x)\leqslant\gamma v(x)\leqslant\frac{c_{2}}{|x|^{n-2s}}.
\]

Thus, the continuity of $u$ allows us to complete the proof of the bound from above in Theorem \ref{thm4.8}.
\end{proof}

\subsection{Radial symmetry of positive solutions}\label{sec6}
Our aim is now to prove the radial symmetry of the positive solution (Theorem \ref{thm4.8}). The main statement of this section is the following.

\begin{theorem}\label{thm5.1}
Let \( n \geqslant 4\) and \( s \in (0, 1) \). Then, all positive solutions of \eqref{equ1} are radially symmetric about some point in $\mathbb{R}^n$ when $p\in[1, 2^*-1)$.
\end{theorem}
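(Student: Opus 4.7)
The proof proceeds by the method of moving planes applied in every direction, combined with the uniform power-type decay of $u$ obtained in the first part of Theorem~\ref{thm4.8}. Fix an arbitrary unit vector $e\in S^{n-1}$; after a rotation we may take $e=e_1$. For $\lambda\in\R$ set $T_\lambda=\{x_1=\lambda\}$, $\Sigma_\lambda=\{x_1<\lambda\}$, and for $x\in\Sigma_\lambda$ denote by $x^\lambda=(2\lambda-x_1,x_2,\ldots,x_n)$ the reflection of $x$ across $T_\lambda$. Set $u_\lambda(x):=u(x^\lambda)$ and $w_\lambda(x):=u_\lambda(x)-u(x)$. Using the reflection invariance of $-\Delta$ and of the kernel $|x-y|^{-n-2s}$, a direct calculation yields in $\Sigma_\lambda$
$$\mathcal{L}w_\lambda(x)=\bigl(u_\lambda^{2^*-1}(x)-u^{2^*-1}(x)\bigr)+\lambda\bigl(h(x^\lambda)u_\lambda^p(x)-h(x)u^p(x)\bigr).$$
Since $p\geqslant 1$ and $u\in L^\infty(\R^n)$, the mean value theorem converts this into a linear relation $\mathcal{L}w_\lambda(x)=c_\lambda(x)w_\lambda(x)$ on the set where $w_\lambda\leqslant 0$, with $c_\lambda\in L^\infty$, provided $h$ is symmetric under the reflection across $T_\lambda$; the phrase ``about some point'' in the statement encodes the compatibility that the eventual center of symmetry of $u$ is also one of $h$.

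\textbf{Starting step.} I claim that $w_\lambda\geqslant 0$ on $\Sigma_\lambda$ for every sufficiently negative $\lambda$. On the negativity set $\Sigma_\lambda^{-}:=\{x\in\Sigma_\lambda:w_\lambda(x)<0\}$ one has $\mathcal{L}w_\lambda-c_\lambda w_\lambda=0$ together with $w_\lambda=0$ on $T_\lambda$. The decay $u(x)\leqslant C_2|x|^{-(n-2s)}$ of Theorem~\ref{thm4.8} forces $|c_\lambda(x)|\leqslant C|x|^{-4s}$ on $\Sigma_\lambda^{-}$ when $|\lambda|$ is large, so a Berestycki--Nirenberg-type weighted maximum principle on unbounded half-spaces, adapted to the mixed operator $\mathcal{L}$ via the positive definiteness of its quadratic form together with the Hardy-type estimate controlling $c_\lambda$ at infinity, forces $\Sigma_\lambda^{-}=\emptyset$ for $\lambda\ll 0$.

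\textbf{Sliding and symmetry.} Set $\lambda_0:=\sup\{\lambda\in\R:w_\mu\geqslant 0\text{ in }\Sigma_\mu\text{ for all }\mu\leqslant\lambda\}$. A monotone-convergence argument gives $w_{\lambda_0}\geqslant 0$ in $\Sigma_{\lambda_0}$. If $\lambda_0=+\infty$, then $u$ would be non-decreasing in $x_1$, contradicting $u(x)\to 0$ as $|x|\to\infty$, so $\lambda_0$ is finite. A strong maximum principle for $\mathcal{L}$ (combining the classical SMP for $-\Delta$ with the nonlocal alternative based on the sign of the principal value $\int[w_{\lambda_0}(y)-w_{\lambda_0}(x)]|x-y|^{-n-2s}\,dy$) then yields the dichotomy: either $w_{\lambda_0}>0$ in $\Sigma_{\lambda_0}$, or $w_{\lambda_0}\equiv 0$ in $\Sigma_{\lambda_0}$. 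In the former case a compactness and decay argument allows the plane to be slid past $\lambda_0$, contradicting its definition; thus $w_{\lambda_0}\equiv 0$ and $u$ is symmetric across $T_{\lambda_0}$. Running the same argument for every $e\in S^{n-1}$ produces a family of hyperplanes of symmetry all passing through a common point $x_0\in\R^n$, whence $u$ is radially symmetric about $x_0$.

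\textbf{Main obstacle.} The principal difficulty is the nonlocal strong maximum principle and Hopf-type lemma required in the sliding step: at a hypothetical interior zero of $w_{\lambda_0}$, the tail integral $\int_{\Sigma_{\lambda_0}^c}[w_{\lambda_0}(y)-w_{\lambda_0}(x)]|x-y|^{-n-2s}\,dy$ must be shown to be strictly negative, which requires an antisymmetric-extension trick tailored to the mixed operator $\mathcal{L}$. The critical exponent $2^*-1$ does not obstruct the argument, since only local Lipschitz continuity of $t\mapsto t^{2^*-1}$ on the bounded range of $u$ is used; the restriction $p\geqslant 1$ serves the same purpose for the subcritical term $\lambda h u^p$, which is precisely why the radial symmetry statement is limited to $p\in[1,2^*-1)$.
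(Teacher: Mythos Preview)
Your moving-planes outline is exactly the approach the paper invokes: its own proof consists of a single sentence citing \cite[Theorem~4.6]{SerenaDipierro2025} and \cite[Theorem~3.1]{MR3626036}, both of which carry out precisely the plan you describe (start the plane far out using the decay of Theorem~\ref{thm4.8}, slide, and terminate via a strong maximum principle for the mixed operator together with the antisymmetric extension of $w_\lambda$). In substance you reproduce what the paper intends.

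One point you flag correctly and the paper leaves implicit: the term $\lambda h(x)u^{p}$ breaks the reflection invariance of the equation unless $h$ itself is symmetric across each moving hyperplane, i.e.\ unless $h$ is radial about the eventual center. Your caveat ``provided $h$ is symmetric under the reflection across $T_\lambda$'' is the right observation; the hypotheses (h$_1$)--(h$_2$) do not impose this, and the cited references treat right-hand sides depending on $u$ alone or with an already radial potential. So your outline is not missing an idea---rather, you have made explicit an assumption on $h$ that the paper's one-line proof requires but does not state.
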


To establish Theorem \ref{thm5.1}, we will exploit the moving planes method. To begin with, we recall some notations. For any \(\lambda \in \mathbb{R}\), we set
\[
\Sigma_{\lambda} := \{x \in \mathbb{R}^n \text{ s.t. } x_1 > \lambda\}
\]
and
\[
T_{\lambda} := \{x \in \mathbb{R}^n \text{ s.t. } x_1 = \lambda\}.
\]
Moreover, for any \( x = (x_1, x_2, \dots, x_n) \in \mathbb{R}^n \), we set \( x^\lambda := (2\lambda - x_1, x_2, \dots, x_n) \), \( u_\lambda(x) := u(x^\lambda) \) and $w_{\lambda}(x)=u_{\lambda}(x)-u(x)$.

\begin{proof}[\textbf{Proof of the radial symmetry in Theorem \ref{thm4.8}}]
Similar to the proof of \cite[Theorem 4.6]{SerenaDipierro2025} and \cite[Theorem 3.1]{MR3626036}, we can obtain the radially symmetry of classical solutions.
\end{proof}

\bibliography{reference}{}
\bibliographystyle{plain}

\end{document}